\documentclass[11pt]{amsart}
\usepackage{latexsym}
\usepackage{amsfonts}
\usepackage{amsmath,amssymb}
\usepackage{color}

\setlength{\textwidth}{150mm}
\setlength{\textheight}{230mm}
\setlength{\oddsidemargin}{.25in}
\setlength{\evensidemargin}{.25in}
\setlength{\topmargin}{-0.2cm}
\setlength{\parskip}{.05in}
\setlength{\hoffset}{-0.4cm}
\setlength{\headheight}{12pt}
\setlength{\headsep}{25pt}

\newcommand{\field}[1]{\mathbb{#1}}
\newcommand{\C}{\field{C}}

\newcommand{\id}[1]{\mathfrak{#1}}
\newcommand{\textswab}[1]{\id{#1}}

\newcommand{\ignore}[1]{}

\newtheoremstyle{s2}{9pt}{9pt}{\rm}{}{\bf}{.}{0.5em}{}
\theoremstyle{s2}
\newtheorem{defi}{Definition}[section]
\newtheorem{ex}[defi]{Example}
\newtheorem{re}[defi]{Remark}

\newtheoremstyle{s1}{9pt}{9pt}{\it}{}{\bf}{.}{0.5em}{}
\theoremstyle{s1}
\newtheorem{lem}[defi]{Lemma}
\newtheorem{theo}[defi]{Theorem}
\newtheorem{co}[defi]{Corollary}

\font\tenmsy=msbm10

\def\Bbb#1{\hbox{\tenmsy#1}}

\DeclareMathOperator{\rank}{rank}
\DeclareMathOperator{\Sing}{Sing}
\DeclareMathOperator{\mult}{mult}
\DeclareMathOperator{\codim}{codim}

\DeclareMathOperator{\Pic}{Pic}

\DeclareMathOperator{\Diag}{Diag}
\DeclareMathOperator{\diag}{diag}

\newcommand{\pa}[2]{\frac{\partial #1}{\partial #2}}

\setcounter{section}{0}

\title[Whitney theorem for complex polynomial mappings]{Whitney theorem for complex polynomial mappings} \makeatletter

\@addtoreset{equation}{section}
\makeatother
\author{M. \ Farnik \& Z. Jelonek \& M.A.S. Ruas}
\address[M. Farnik]{Jagiellonian University\\
Faculty of Mathematics and Computer Science\\
{\L}ojasiewi\-cza~6, 30-348 Krak\'ow, Poland}
\email{michal.farnik@gmail.com}

\address[Z. Jelonek]{Instytut Matematyczny\\
Polska Akademia Nauk\\
\'Sniadeckich 8, 00-656 Warszawa, Poland}
\email{najelone@cyf-kr.edu.pl}

\address[M.A.S. Ruas]{Departamento de Matem\'atica,
ICMC-USP, Caixa Postal 668, 13560-970 S\~ao Carlos, S.P., Brasil}
\email{maasruas@icmc.usp.br}

\keywords{ polynomials, folds, cusp singularities}

\subjclass{14 R 99, 32 A 10}
\thanks{The  authors are partially supported by the grant of Narodowe Centrum Nauki, grant number 2015/17/B/ST1/02637, additionally the third author is partially
supported by the FAPESP grant
2014/00304-2}


\begin{document}

\begin{abstract}
For given natural numbers $d_1,d_2$ 
we describe the topology of a generic polynomial mapping $F=(f,g)\colon X\to\C^2$, with deg $f\le d_1$ and deg $g\le d_2$. Here $X$ is a complex plane or a complex sphere.
\end{abstract}

\maketitle

\bibliographystyle{alpha}

\section{Introduction}
Polynomial mappings $F\colon \C^n\to\C^n$ are the most classical objects in the complex analysis, yet their topology has not been studied up till now. To the best knowledge of the authors complex algebraic families of polynomial mappings on affine varieties have not been investigated so far. Here we describe an idea of such study. We consider the family $\Omega_{\C^n}(d_1,\ldots,d_m)$ of polynomial mappings $F=(F_1,\ldots, F_m)\colon \C^n\to\C^m$ of degree bounded by $(d_1,\ldots,d_m)$.

 For a smooth affine variety $X \subset \C^n$ we also consider the family $\Omega_X(d_1,\ldots,d_m)=\{ F|_X: F\in \Omega_{\C^n}(d_1,\ldots,d_m)\}$. In particular based on Mather Projection Theorem, we prove  that  a generic member of $\Omega_X(d_1,\ldots,d_m)$ is transversal to  a given modular submanifold (in particular to a given Thom-Boardman strata $\Sigma^I$) in the space of multi-jets $_sJ^k(X,\C^m).$ Moreover, we show that a generic member of $\Omega_X(d_1,\ldots,d_m)$ is transversal to any smooth algebraic subvariety of the space of jets $J^k(X,\C^m)$, at least if $d_i\ge k.$ 

Let us recall that in \cite{jel2} the second author proved that if $M,X,Y$ are affine irreducible varieties, $X,Y$ are smooth  and $\Phi\colon M\times X \to Y$ is an algebraic family of polynomial mappings such that the generic element of this family is proper then two generic members of this family are topologically equivalent. In particular if $X\subset \C^p$ is of dimension $n$ and $m\ge n$ then any two generic members of the family $\Omega_X(d_1,\ldots,d_m)$ are topologically equivalent. For example, if $X$ is a smooth surface then the numbers $c_X(d_1,d_2)$ and $d_X(d_1,d_2)$ of cusps and double folds, respectively, of a generic member of the family $\Omega_X(d_1,d_2)$ are well-defined.

Our aim is to describe effectively the topology of such generic mappings. We consider in this paper the simplest case, when $n=m=2$ and $X=\C^2$ or $X$ is the complex sphere $S=\{ (x,y,z) \in \C^3: x^2+y^2+z^2=1\}$.
In these cases we describe the topology of the set $C(F)$  of critical points of $F$ and the topology of its discriminant $\Delta(F).$
In particular we show that a generic polynomial mapping $F\in \Omega_X(d_1,d_2)$ has only cusps, folds and double folds as singularities and we compute the number $c_X(d_1,d_2)$ of cusps and the number $d_X(d_1,d_2)$ of double folds of such generic polynomial mapping.
Our ideas work well also in higher dimensions. This paper is the first step in a study of the topology of
generic polynomial mappings $F\colon \C^n\to\C^n$.

The problem of counting the number of cusps of a
generic perturbation of a real plane-to-plane singularity was
considered by Fukuda and Ishikawa in \cite{fi}. They proved that
the number modulo 2 of cusps of a generic perturbation $F$ of an $\mathcal{A}$
finitely determined map-germ $F_0\colon (\mathbb R^{2},
0)\rightarrow(\mathbb R^{2},0)$ is a topological invariant of
$F_0$. More recently, in \cite{ks} Krzy\.zanowska and Szafraniec
gave an algorithm  to compute the number of cusps for sufficiently
generic fixed real polynomial mapping of the real plane.

Algebraic formulas to count the number of cusps and nodes of a generic
perturbation of an $\mathcal{A}$ finitely determined holomorphic map-germ $F_0\colon
(\mathbb C^2,0) \to (\mathbb C^2,0)$ were given by Gaffney and
Mond in \cite{gm1, gm2} (see also \cite{rieg}). In this case any two generic
perturbations $F$ of $F_0$ defined on a sufficiently small
neighborhood of $0$ are topologically equivalent, so the
numbers of cusps and nodes of $F$ are invariants of the map-germ $F_0$.

Let us note that in some cases our result allows also to use local methods to study global mappings. Indeed, in the special case when $\gcd(d_1,d_2)=1$ the numbers $c(F)$ and $d(F)$ can be computed by using local methods of Gaffney and Mond \cite{gm2} or Ohmoto methods \cite{ohm}  based on Thom polynomials. Note that in this case the leading homogenous part $F_h$ of a generic mapping $F=(f,g)$ is $\mathcal{A}$ finitely determined. Moreover,
we have a deformation $F_t(x)=(t^{d_1}f(t^{-1}(x)), t^{d_2}g(t^{-1}(x)))$.
Now we can use the fact (which is first proved in our paper) that a generic (with respect to the Zariski topology) mapping $F\in \Omega_X(d_1, d_2)$ has only folds, cusps and double folds as singularities.
Thus for the deformation $F_t\in \Omega_X(d_1, d_2)$ of $F$ all $F_t, \ t\not=0$ are generic mappings and all cusps and nodes of $F_t$ tend to $0$ when $t \rightarrow 0$.
In this case our formulas for $c(F)$ and $d(F)$ coincide with formulas of Gaffney-Mond etc.

However, in the general case these approaches do not work since any homogeneous mapping is not ${\mathcal A}$ finitely determined if $\gcd(d_1,d_2)\not=1$ (Gaffney-Mond, \cite{gm2}). Note that even if a germ $F$ is ${\mathcal K}$ finitely determined then in general the number of cusps and nodes depends on a given stable perturbation $F_t$ of $F$ (see Section \ref{finite}).
In particular in that case the local number of cusps or nodes cannot be defined and the methods of Gaffney-Mond and Ohmoto do not work. If $\gcd(d_1,d_2)\not=1$ our formulas do not coincide with formulas of Gaffney-Mond and Ohmoto, or rather the latter simply do not apply. Hence in general even discrete global invariants can not be obtained by local methods or methods based on Thom polynomials.

Now we will briefly describe the content of the paper.
In Section $2$ we state and prove general theorems. In Section $3$ we describe the topology of the set of critical points of a generic mapping
$F\in\Omega_{\C^2}(d_1,d_2)$. Moreover we compute the number $c_{\C^2}(d_1,d_2)$ of cusps. In Section $4$ we describe the topology of the discriminant
$\Delta(F)$ and compute the number $d_{\C^2}(d_1,d_2)$ of nodes of $\Delta(F)$. In Section $5$ we describe the topology of the set of critical points of a generic mapping $F\in\Omega_S(d_1,d_2)$, and compute the number $c_S(d_1,d_2)$, where $S\subset\C^3$ is a complex sphere. In Section $6$ we describe the topology of the discriminant $\Delta(F)$ and we compute the number $d_S(d_1,d_2)$.

In Section \ref{secGC} we introduce the notions of a generalized cusp and the index of a generalized cusp $\mu$ (see Definitions \ref{dfGenCus} and \ref{dfGenCusIn}). We show that if $F=(f, g)\colon X\to\C^2$ is an arbitrary polynomial mapping
with $\deg f\le d_1$, $\deg g\le d_2$ and generalized cusps at points $a_1,\ldots, a_r$ then $\sum^r_{i=1}
\mu_{a_i}\le c_X(d_1,d_2)$.

We conclude the paper with Section \ref{finite} which is devoted to proper stable deformations of a given polynomial mapping $F:X\to\C^m$.
In particular we give an example of a $\mathcal K$ finitely determined polynomial mapping $F:\C^2\to\C^2$ and its two stable deformations $F_t$, $G_t$ which have different number of cusps at $0$.

\section{General polynomial mappings}
Let $\Omega_{n}(d_1,\ldots,d_m)$ denote the space of polynomial mappings $F:\C^n\to\C^m$ of multi-degree bounded by $d_1,\ldots,d_m$.
Similarly if $X\subset \C^p$ is a smooth affine variety we consider the family $\Omega_{X}(d_1,\ldots,d_m)=\{ F|_X: F\in \Omega_{p}(d_1,\ldots,d_m)\}$.

By $J^q(\C^n,\C^m)$ we denote the space of $q$-jets of
polynomial mappings $F=(f_1,\ldots, f_m):\C^n \to \C^m$. We define
it exactly as in \cite{mather}.

If we fix coordinates in
the domain and the target then we can identify $J^q(\C^n,\C^m)$ with the
space $\C^n\times \C^m \times (\C^{N_q})^m$, where $\C^{N_q}$
parametrizes coefficients of polynomials of $n$-variables and of
degree bounded by $q$ with zero constant term (which correspond to
suitable Taylor polynomials). In further applications, in most
cases, we treat the space $J^q(\C^n,\C^m)$ in this simple way. In
particular for a given polynomial mapping $F\colon \C^n \to \C^m$ we
can define the mapping $j^q(F)$ as
$$j^q(F) \colon \C^n \ni x \mapsto \left(x, F(x),\left(\frac{\partial^{|\alpha|}{f_i}}{\partial x^\alpha}(x)\right)_{1\le
i\le m,1\le |\alpha|\le q}\right)\in J^q(\C^n,\C^m).$$

If $X^n\subset \C^p$ is a smooth affine variety then the space $J^q(X,\C^m)$ has the structure of a smooth algebraic manifold and can be locally represented in the same simple way as above. Indeed, locally $X$ is a complete intersection, i.e. for every point $x\in X$ there is an open neighborhood $U_x$ of $x$ such that $U_x= \{ g_1=0,\ldots, g_{p-n}=0\}$ (in some open set of $\C^p$) and $\rank \Big[\pa{g_i}{x_j}\Big] = p-n$ on $U_x$. We can assume that the mapping $(x_1,\ldots, x_n, g_1,\ldots, g_{p-n})$ is biholomorphic near $x$. In particular we have $x_i=\phi_i(x_1,\ldots,x_n)$ for $i>n$. Hence there exists another Zariski open neighborhood
$V_x$ of $x$ such that in $V_x$ we have global local holomorphic coordinates $x_1,\ldots,x_n$. In particular $J^q(V_x,\C^m)$ can be identified with the space $V_x\times \C^m \times (\C^{N_q})^m$. In local coordinates we have a mapping
$$j^q(F) \colon V_x \ni z \mapsto \left(z, F(z),\left(\frac{\partial^{|\alpha|}{f_i}}{\partial x^\alpha}(z)\right)_{1\le i\le m,1\le |\alpha|\le q}\right)\in J^q(V_x,\C^m).$$

Now we show that the space $J^q(X,\C^m)$ has the structure of a smooth algebraic manifold. Let $\mathcal D$ be a sheaf of derivations on $X$.
Since $\mathcal D$ is coherent  and $X$ is affine $\mathcal D$  is generated by a finite number of global sections $D_1,\ldots,D_s$. For a multi-index $\alpha=(\alpha_1,\ldots, \alpha_s)$ let $D^\alpha=D^{\alpha_1}\ldots D^{\alpha_s}$. Now let $Q$ be the number of multi-indexes $\alpha$
with $|\alpha|\leq k.$ Take $d_1=d_2\ldots=d_m=k$ and consider the mapping
$$\Psi: X\times \Omega_{n}(d_1,\ldots,d_m)\ni (x,F)\mapsto (x, F(x), (D^\alpha(x)(F|_X))_{|\alpha\leq k})\in X\times \C^m\times \C^Q.$$
It is easy to see that the mapping $\Psi$ is algebraic and its image is exactly the space $J^k(X,\C^m)$.

By $_sJ^q(X,\C^m)$ we denote the space of multi $q$-jets of polynomial mappings $F=(f_1,\ldots, f_n):X\to\C^m$. We denote by $\Diag$ the set $\{(x_1,\ldots x_s)\in X^s\ :\ x_i=x_j$ for some $i\neq j\}$ and for bundles $\pi_i:W_i\rightarrow X$ we denote by $\Diag_X$ the set $\{(w_1,\ldots w_s)\ :\ \pi_i(w_i)=\pi_j(w_j)$ for some $i\neq j\}$. We have  $_s{ J}^q(X,\C^m)=(J^q(X,\C^m))^s\setminus\Diag_X$. More generally, we define the space of $(q_1,\ldots,q_s)$-jets to be   $J^{q_1,\ldots,q_s}(X,\C^m):=J^{q_1}(X,\C^m)\times\ldots\times J^{q_s}(X,\C^m)\setminus\Diag_X$. We call them, if there is no danger of confusion, the space of multi-jets. Again, for a given polynomial mapping $F:X\to\C^m$ we have the mapping
$$J^{q_1,\ldots,q_s}(F) : X^s\setminus\Diag\mapsto(j^{q_1}(F)(x_1),\ldots,j^{q_s}(F)(x_s))\in  J^{q_1,\ldots,q_s}(X,\C^m).$$

In the sequel we use the Thom-Boardman manifolds $\Sigma^I$ (see \cite{boa}, \cite{math2}) which give stratifications of the jet space $J^k(X,\C^m)$. For a mapping $F:X\to\C^m$ we denote $\Sigma^I(F):=(J^q(F))^{-1}(\Sigma^I)$. The sets $\Sigma^i(F)$ consist of points where $F$ has corank exactly $i$. Moreover, if $\Sigma^{1_1,\ldots,i_k}(F)$ is a manifold then $\Sigma^{1_1,\ldots,i_k,i_{k+1}}(F)=\Sigma^{i_{k+1}}(F|\Sigma^{1_1,\ldots,i_k}(F))$.
We will also use the Thom-Boardman manifolds in the space of multi-jets. For bundles $\pi_i:W_i\rightarrow Y$ we denote by $\diag_Y$ the set $\{(w_1,\ldots w_s)\ :\ \pi_i(w_1)=\ldots=\pi_s(w_s)\}$. We denote $(\Sigma^{I_1},\ldots,\Sigma^{I_s}):=\Sigma^{I_1}\times\ldots\times\Sigma^{I_s}\cap\diag_Y$. 

Let us state the following result of Mather (this is an analogue of Theorem 1 in \cite{mather}, as Mather remarked after stating Theorem 6, the proof is analogous and the main change is the use of Bertini's theorem instead of Sard's theorem):

\begin{theo}
Let $X\subset \C^n$ be a smooth affine algebraic subvariety and let \linebreak $W\subset_sJ^q(X,\C^m)$ be a modular submanifold. There exists a Zariski open non-empty subset $U$ in the space of all linear mappings ${\mathcal L}(\C^n,\C^m)$ such that for every $L\in U$ the mapping $L: X\to \C^m$ is transversal $W$. 
\end{theo}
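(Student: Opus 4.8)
The plan is to transcribe Mather's proof of jet transversality for generic projections into the algebraic category, the single substantive change being that the concluding application of Sard's theorem is replaced by Bertini's theorem on generic smoothness of a morphism; this is exactly what upgrades a residual set of good linear maps to a Zariski open one over $\C$. First I would introduce the universal evaluation morphism
$$\Phi\colon(X^s\setminus\Diag)\times{\mathcal L}(\C^n,\C^m)\ni\big((x_1,\ldots,x_s),L\big)\longmapsto\big(j^q(L)(x_1),\ldots,j^q(L)(x_s)\big)\in{}_sJ^q(X,\C^m),$$
which is algebraic, and reduce the theorem to two assertions: that $\Phi$ is transversal to $W$, and that a parametric transversality argument then cuts out the desired Zariski open set of linear maps.

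The heart of the proof is the transversality $\Phi\pitchfork W$. Fixing $\xi=((x_i),L)\in\Phi^{-1}(W)$ with image $w=\Phi(\xi)$, I note that $j^q(L)(x_i)$ depends linearly on the coefficients of $L$, so the image of the partial differential $D_L\Phi$ is the linear span $V$ of all multijets $\big(j^q(M)(x_1),\ldots,j^q(M)(x_s)\big)$, $M\in{\mathcal L}(\C^n,\C^m)$; here it is essential that $X$ is curved, so that the restriction $M|_X$ carries non-trivial jets of every order, which one computes in the local coordinates $x_1,\ldots,x_n$ on the charts $V_x$ constructed above. I would then prove $V+T_wW=T_w\,{}_sJ^q(X,\C^m)$: the modular hypothesis on $W$ says precisely that those directions in the multijet space that cannot be realised by varying the linear map are already tangent to $W$. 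Carrying out this matching is the algebraic analogue of Mather's key lemma, and I expect it to be the main obstacle.

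Granting $\Phi\pitchfork W$, the set $Z:=\Phi^{-1}(W)$ is a smooth locally closed subvariety, and I would study the projection $p\colon Z\to{\mathcal L}(\C^n,\C^m)$. If $p$ is not dominant, then $U:={\mathcal L}(\C^n,\C^m)\setminus\overline{p(Z)}$ is non-empty and Zariski open, and for $L\in U$ the preimage $({}_sj^q(L))^{-1}(W)$ is empty, so transversality is vacuous. If $p$ is dominant, then since $Z$ is smooth and $\operatorname{char}\C=0$, Bertini's theorem yields a non-empty Zariski open $U\subset{\mathcal L}(\C^n,\C^m)$ over which $p$ is smooth. In both cases the standard tangent-space computation of parametric transversality shows that smoothness of $p$ over a point $L$ (surjectivity of $Dp$ along $p^{-1}(L)$) is equivalent to transversality of $\Phi_L={}_sj^q(L)$ to $W$; hence the restriction $L\colon X\to\C^m$ is transversal to $W$ for every $L\in U$, as claimed.
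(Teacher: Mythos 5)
The paper itself does not prove this theorem: it is quoted as Mather's result, with the remark that the proof is the one in \cite{mather} for Theorem 1, the only change being Bertini's theorem in place of Sard's. Your outer skeleton --- the universal evaluation map $\Phi$, the dichotomy between the dominant and non-dominant cases of $p\colon Z\to{\mathcal L}(\C^n,\C^m)$, and generic smoothness replacing Sard --- matches that remark and is sound as far as it goes. But the entire mathematical content of Mather's theorem lies in your second step, the claim $\Phi\pitchfork W$, and there your argument has a genuine gap: you assert that ``the modular hypothesis on $W$ says precisely that those directions in the multijet space that cannot be realised by varying the linear map are already tangent to $W$.'' That is not what modularity says. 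Modularity is an intrinsic condition on $W$ (invariance under the diffeomorphism actions together with a tangency condition formulated via the group/unfolding structure of the jet space); it makes no reference to the embedding $X\subset\C^n$ or to the family of linear maps. The implication ``$W$ modular $\Rightarrow$ the evaluation map of the family of linear projections is transversal to $W$'' is exactly Mather's hard lemma, proved through a delicate analysis of the group actions; taking it as the definition makes your proof circular at its crux. Indeed, the paper explicitly warns that the result fails for general smooth invariant submanifolds, so some non-trivial use of modularity is unavoidable.

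Moreover, the specific mechanism you propose cannot work. You claim $V+T_wW=T_w\,{}_sJ^q(X,\C^m)$, where $V$ is the span of multijets of restrictions of linear maps. But $\dim V\le nm$, while modular submanifolds meeting the image of $\Phi$ can have codimension larger than $nm$ (for instance multijet Thom--Boardman strata $(\Sigma^{I_1},\ldots,\Sigma^{I_s})$ with $s$ large, whose codimension grows like $(s-1)m$ plus the sum of the local codimensions; these are exactly the $W$ the paper applies the theorem to). For such $W$ your equality is dimensionally impossible, yet the theorem must still hold --- there the conclusion comes from generic $L$ missing $W$ altogether, which requires controlling $\Phi^{-1}(W)$ by a combined use of the point-variations, the $L$-variations and the modularity condition, not from your spanning claim. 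Your appeal to the curvature of $X$ is also insufficient: if $X$ is an affine-linear subvariety (or at points where the embedding is flat to high order), every higher-order jet of $M|_X$ vanishes, so curvature cannot in general supply the missing normal directions to $W$. In short, steps 1 and 3 are routine; step 2 is the theorem, and it is not proved --- nor is the sketched route to it one that can be completed as stated.
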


This theorem has the following nice application (which in the real smooth case was first observed by S. Ichiki in \cite{ichiki}):

\begin{co}\label{wazne}
Let $X\subset \C^n$ be an affine smooth algebraic subvariety, let $W\subset {_sJ^q(X,\C^m)}$ be a modular submanifold and let $F: X\to \C^m$ be a polynomial mapping. There exists a Zariski open non-empty subset $U$ in the space of all linear mappings 
${\mathcal L}(\C^n,\C^m)$ such that for every $L\in U$ the mapping $F+L: X\to \C^m$ is transversal to $W$. 
\end{co}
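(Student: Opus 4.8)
The plan is to reduce the statement to the theorem of Mather stated above by replacing $X$ with the graph of $F$, thereby absorbing $F$ into a linear map on a larger space. Let $\gamma\colon X\to\C^n\times\C^m=\C^{n+m}$, $\gamma(x)=(x,F(x))$, be the graph embedding; its image $\Gamma=\gamma(X)$ is a smooth affine subvariety of $\C^{n+m}$ isomorphic to $X$. Since the (multi-)jet spaces depend only on the abstract source variety and the target, $\gamma$ induces an isomorphism ${_sJ^q(X,\C^m)}\cong{_sJ^q(\Gamma,\C^m)}$ carrying the modular submanifold $W$ to a modular submanifold $W'\subset{_sJ^q(\Gamma,\C^m)}$. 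The point of the embedding is the identity
$$\Lambda_{L,A}|_\Gamma\circ\gamma=AF+L\colon X\to\C^m,\qquad \Lambda_{L,A}(x,y):=L(x)+A(y),$$
valid for every $L\in\mathcal L(\C^n,\C^m)$ and $A\in\mathcal L(\C^m,\C^m)$; taking $A=\mathrm{Id}$ recovers $F+L$. Because $\gamma$ is an isomorphism carrying $W$ to $W'$, the map $AF+L\colon X\to\C^m$ is transversal to $W$ if and only if $\Lambda_{L,A}|_\Gamma$ is transversal to $W'$.

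First I would apply the above theorem to $\Gamma\subset\C^{n+m}$ and to $W'$, obtaining a nonempty Zariski open set $U'\subset\mathcal L(\C^{n+m},\C^m)$ such that $\Lambda|_\Gamma$ is transversal to $W'$ for every $\Lambda\in U'$. Writing $\Lambda=\Lambda_{L,A}$ and setting $U''=U'\cap\{\det A\neq0\}$, which is again a nonempty Zariski open set, we get by the previous paragraph that $AF+L$ is transversal to $W$ for every $(L,A)\in U''$. Now the factorization $AF+L=A\circ(F+A^{-1}L)$, together with the invariance of the modular submanifold $W$ under the natural action of $GL_m(\C)$ on the target, yields
$$AF+L\ \text{transversal to}\ W\iff F+A^{-1}L\ \text{transversal to}\ W,$$
so $F+A^{-1}L$ is transversal to $W$ for every $(L,A)\in U''$.

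It remains to produce a generic open set in $\mathcal L(\C^n,\C^m)$ itself. The morphism $\rho\colon\{(L,A):\det A\neq0\}\to\mathcal L(\C^n,\C^m)$, $\rho(L,A)=A^{-1}L$, is regular and dominant, indeed surjective since $\rho(L,\mathrm{Id})=L$. By Chevalley's theorem the image $\rho(U'')$ is a constructible dense subset of $\mathcal L(\C^n,\C^m)$, hence contains a nonempty Zariski open set $U$; by the previous paragraph every $L\in U\subset\rho(U'')$ satisfies that $F+L$ is transversal to $W$, which is the assertion. The main point requiring care is exactly this last transfer of genericity: Mather's theorem only furnishes a generic set inside the full space $\mathcal L(\C^{n+m},\C^m)$, and one cannot simply intersect it with the affine slice $\{A=\mathrm{Id}\}$, which could a priori lie entirely in the bad locus. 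The $GL_m(\C)$-invariance of the modular submanifold $W$ is precisely what allows the dominant reparametrization $A^{-1}L$ to spread the genericity obtained on $\C^{n+m}$ uniformly over all of $\mathcal L(\C^n,\C^m)$.
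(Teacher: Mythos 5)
Your proof is correct and follows essentially the same route as the paper: the graph embedding, an application of Mather's theorem to the graph so that a general linear map $\Lambda_{L,A}$ restricts to $AF+L$, the $GL_m(\C)$-invariance of the modular submanifold $W$ to pass to $F+A^{-1}L$, and finally density plus constructibility (Chevalley) to extract a Zariski open set in $\mathcal L(\C^n,\C^m)$. Your last step, pushing the genericity forward along the dominant map $(L,A)\mapsto A^{-1}L$, is just a more explicit rendering of the paper's "specialize $A$ to the identity" plus dense-constructible argument.
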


\begin{proof}
Let $G: X\ni x \mapsto (x, F(x))\in X\times \C^m$ and $\tilde{X}=graph(G)\cong X$. Apply Mather's theorem to the variety $\tilde{X}$.
We get that for a general matrix $A\in GL(m,m)$ and general linear mapping $L\in {\mathcal L}(\C^n,\C^m)$ the mapping $H(A,L)=A(F_1,\ldots,F_m)^T+L$ is transversal to $W$. Hence also the mapping $A^{-1}\circ H(A,L)$ is transversal to $W$ (because $W$ is invariant with respect to action of global biholomorphisms). This means that the mapping $F+ A^{-1}L$ is transversal to $W$. But we can specialize the matrix $A$ to the identity and the mapping $L$ to a given linear mapping $L_0\in {\mathcal L}(\C^n,\C^m)$. Hence we see that there is a dense subset of linear mappings $L\in {\mathcal L}(\C^n,\C^m)$ such that the mapping $F+L: X\to \C^m$ is transversal to $W$. However, the set of such mappings is a constructible subset of ${\mathcal L}(\C^n,\C^m)$. Since it is dense and constructible, it must contain a non-empty Zariski open subset. 
\end{proof}

We have the following general result which follows directly from Corollary \ref{wazne}:

\begin{theo}\label{trans}
Let $X\subset \C^n$ be a smooth algebraic variety and let $W\subset {_sJ^q(X,\C^m)}$ be a modular submanifold. Then there is a Zariski open subset  $V\subset \Omega_X(d_1,\ldots,d_m)$ such that for every $F\in V$ the mapping $F$ is
transversal to $W.$ In particular, there is a Zariski open subset  $U\subset \Omega_X(d_1,\ldots,d_m)$ such that for every $F\in U$ the mapping $F$ is
transversal to the Thom-Boardman strata $(\Sigma^{I_1},\ldots,\Sigma^{I_s})$ in $_sJ^q(X,\C^m)$.
\end{theo}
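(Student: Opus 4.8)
The plan is to exploit that $\Omega_X(d_1,\ldots,d_m)$ is a finite-dimensional, hence irreducible, affine space, and to combine two facts about the subset $T\subset \Omega_X(d_1,\ldots,d_m)$ of mappings transversal to $W$: that $T$ is constructible and that $T$ is dense. A dense constructible subset of an irreducible variety always contains a nonempty Zariski open subset, and that open set will be the desired $V$.

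First I would establish constructibility of $T$. The $s$-fold multi-jet extension $J^{q,\ldots,q}(F)\colon X^s\setminus\Diag\to {_sJ^q(X,\C^m)}$ has value depending algebraically both on the point $(x_1,\ldots,x_s)\in X^s\setminus\Diag$ and on the coefficients of $F\in\Omega_X(d_1,\ldots,d_m)$. Hence the incidence set
$$\Gamma=\{(F,x)\ :\ J^{q,\ldots,q}(F)(x)\in W \text{ and } J^{q,\ldots,q}(F) \text{ is not transversal to } W \text{ at } x\}$$
is cut out by algebraic equations on $W$ together with a rank-drop condition on the relevant tangent map, so it is a constructible subset of $\Omega_X(d_1,\ldots,d_m)\times (X^s\setminus\Diag)$. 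By Chevalley's theorem its projection to $\Omega_X(d_1,\ldots,d_m)$ is constructible, and this projection is exactly the set of $F$ that fail to be transversal to $W$; its complement $T$ is therefore constructible.

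Next I would prove that $T$ is dense, and this is where Corollary \ref{wazne} enters. Note that the linear mappings $\mathcal L(\C^n,\C^m)$ (restricted to $X$) sit inside $\Omega_X(d_1,\ldots,d_m)$ as a linear subspace, and adding an element of $\mathcal L(\C^n,\C^m)$ keeps a mapping in the family. Suppose $T$ were not dense, so that $Z=\adh{T}$ is a proper closed subvariety. Pick $F\in\Omega_X(d_1,\ldots,d_m)\setminus Z$. By Corollary \ref{wazne} the set $\{L\in\mathcal L(\C^n,\C^m)\ :\ F+L\in T\}$ is a nonempty Zariski open, hence dense, subset of $\mathcal L(\C^n,\C^m)$; since $F+L\in T\subset Z$ for all such $L$ and $Z$ is closed, the closed set $\{L\ :\ F+L\in Z\}$ contains a dense subset of $\mathcal L(\C^n,\C^m)$ and therefore equals all of it. Taking $L=0$ gives $F\in Z$, a contradiction. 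Thus $T$ is dense.

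Combining the two steps, $T$ is a dense constructible subset of the irreducible variety $\Omega_X(d_1,\ldots,d_m)$, so it contains a nonempty Zariski open subset $V$, which is the required set. For the final assertion I would apply this with $W=(\Sigma^{I_1},\ldots,\Sigma^{I_s})$, the Thom--Boardman strata being invariant under the relevant group action and hence modular, obtaining the desired $U$. The point demanding most care is the constructibility step: one must verify that transversality---a condition quantified over all points of the (possibly positive-dimensional) preimage and phrased through a rank drop---is genuinely algebraic, so that Chevalley's theorem applies. The density step, by contrast, is an immediate consequence of Corollary \ref{wazne} together with the linear structure of the family.
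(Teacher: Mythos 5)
Your proposal is correct and follows essentially the same route as the paper's proof: density of the transversal mappings via Corollary \ref{wazne} combined with constructibility, the fact that a dense constructible subset of the irreducible affine space $\Omega_X(d_1,\ldots,d_m)$ contains a nonempty Zariski open set, and modularity of the Thom--Boardman strata for the final assertion. The only difference is that you spell out the constructibility (incidence variety plus Chevalley) and the density (contradiction via the closure argument) which the paper leaves implicit.
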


\begin{proof}
By Corollary \ref{wazne} the set of mappings $F\in\Omega_n(d_1,\ldots,d_m)$ which are transversal to $W$ is dense in $ \Omega_n(d_1,\ldots,d_m)$. However it is also constructible. Thus it must contain a Zariski open subset. Now observe that $(\Sigma^{I_1},\ldots,\Sigma^{I_s})$ is a modular
manifold.
\end{proof}

Note that Mather's result does not hold for every smooth subvariety in the space of jets, it requires the variety to be modular.
We prove here a result in the general direction -- we omit the assumption of modularity for the price of sufficiently high degree of the mapping.

We start with the following fact:

\begin{lem}\label{lem_interpolacja}
For every sequence of pairwise different points $c_1,\ldots,c_s\in \C^n$, a number $i\in \{1,\ldots,s\}$ and sequence of numbers $a_\alpha$, where $\alpha$ ranges through multiindexes $\alpha=(\alpha_{1},\ldots,\alpha_{n})$ with $0\le|\alpha|\le q_i$ there is a polynomial $H^i$ of degree bounded by $D\le \sum_{j=1}^m q_j+m-1$ for $i=1,\ldots,n$, such that:
\begin{enumerate}
\item for every multindex $\alpha$ with  $|\alpha|\le q_i$ we have $\frac{\partial^\alpha H^i}{\partial x_1^{\alpha_1}\ldots\partial x_n^{\alpha_n}}(c_i)=a_\alpha$, 
\item for every $j\not=i$ and every multindex $\beta$ with $|\beta|\le q_j$ we have 
$\frac{\partial^\beta H^i}{\partial x_1^{\beta_1}\ldots\partial x_n^{\beta_n}}(c_j)=0$.
\end{enumerate}
\end{lem}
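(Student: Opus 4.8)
The statement is a multivariate Hermite interpolation problem: at the single point $c_i$ we prescribe the full $q_i$-jet, while at every other point $c_j$ we demand that the $q_j$-jet vanish, i.e. that $H^i$ have a zero of order at least $q_j+1$ there. The plan is to first produce a polynomial that already carries all the required vanishing at the points $c_j$, $j\neq i$, while staying nonzero at $c_i$, and then to correct its jet at $c_i$ by multiplying by a suitable low-degree factor.

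For the first step, for each $j\neq i$ I would choose an affine-linear form $\ell_j$ with $\ell_j(c_j)=0$ and $\ell_j(c_i)\neq 0$; such a form exists precisely because $c_i\neq c_j$. Setting $Q=\prod_{j\neq i}\ell_j^{\,q_j+1}$, the factor $\ell_j^{\,q_j+1}$ forces $Q$ to vanish to order at least $q_j+1$ at $c_j$, so that every partial derivative of $Q$ of order $\le q_j$ vanishes at $c_j$; at the same time $Q(c_i)=\prod_{j\neq i}\ell_j(c_i)^{q_j+1}\neq 0$. This $Q$ has degree $\sum_{j\neq i}(q_j+1)$, and the extra factors only raise the order of vanishing at each $c_j$, never lower it.

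For the correction step I would work in the truncated local ring $R_i:=\C[x_1,\dots,x_n]/\mathfrak{m}_{c_i}^{\,q_i+1}$, where $\mathfrak{m}_{c_i}$ is the maximal ideal at $c_i$; this ring is canonically identified with the space of $q_i$-jets at $c_i$. Since $Q(c_i)\neq 0$, the class of $Q$ is a unit in $R_i$, so multiplication by it is a linear automorphism of $R_i$. As the monomials $(x-c_i)^\alpha$ with $|\alpha|\le q_i$ span $R_i$, I can choose a polynomial $R$ of degree $\le q_i$ whose product with $Q$ equals, modulo $\mathfrak{m}_{c_i}^{\,q_i+1}$, the prescribed jet $\sum_{|\alpha|\le q_i}\frac{a_\alpha}{\alpha!}(x-c_i)^\alpha$; this realizes condition (1). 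Putting $H^i=RQ$, multiplication by $R$ does not disturb the earlier vanishing, since $RQ$ still vanishes to order at least $q_j+1$ at each $c_j$, so condition (2) persists. Finally $\deg H^i\le q_i+\sum_{j\neq i}(q_j+1)=\sum_{j} q_j+(s-1)$, the asserted bound.

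The only genuinely delicate point is the correction step: one must be sure that prescribing the entire $q_i$-jet of the product $RQ$ is solvable with $R$ of degree only $q_i$. This is exactly what the unit property of $Q$ in $R_i$ guarantees — without $Q(c_i)\neq 0$ the multiplication map on $q_i$-jets would fail to be invertible and the construction would collapse. Everything else is routine bookkeeping on orders of vanishing and on degrees.
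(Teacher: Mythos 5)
Your proof is correct and follows essentially the same route as the paper: both construct $H^i$ as a product of a degree-$\le q_i$ correction polynomial with $\prod_{j\neq i}\ell_j^{\,q_j+1}$, where the $\ell_j$ are linear forms vanishing at $c_j$ but not at $c_i$ (the paper first makes a linear change of coordinates so that it can take $\ell_j=x_1-c_{j1}$). The only difference is cosmetic: where you invoke invertibility of multiplication by the unit $Q$ in the truncated local ring $\C[x_1,\ldots,x_n]/\mathfrak{m}_{c_i}^{\,q_i+1}$, the paper determines the coefficients $b_\alpha$ of the correction factor by an explicit induction on $|\alpha|$, which is exactly a hands-on verification of that same invertibility.
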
 

\begin{proof}
Using linear change of coordinates we can assume that $c_{i1}\not=c_{j1}$ for $i\not=j.$ By the symmetry it is enough to construct a polynomial $H^1$. Take 
 $$H^1_\alpha= (\sum_{|\alpha|\le q_i} b_\alpha (x-c_1)^\alpha)  \prod_{i=2}^m (x_1-c_{i1})^{q_i+1}.$$ We determine coefficients $b_\alpha$ inductively. If $\alpha=(0,...,0)=0$, then $b_0=a_0/\prod_{i=2}^m (c_{11}-c_{i1})^{q_i+1}.$ Now assume that we have all $b_\beta$ determined for 
 $|\beta|=k<q_1$ and we show how to determine $b_\alpha$ with $|\alpha|=k+1.$   We have $$\frac{\partial^{\alpha} H^1}{\partial x_1^{\alpha_1}...\partial x_n^{\alpha_n}}(x)=\alpha!b_\alpha \prod_{i=2}^m (x_1-c_{i1})^{q_i+1}+ R(x),$$ where $R(c_1)$ depends only at $c_1,...,c_n$ and $b_\gamma$ where $|\gamma|\le k.$ Hence it is enough to take $b_\alpha=(a_\alpha-R(c_1))/\alpha!\prod_{i=2}^m (c_{11}-c_{i1})^{q_i+1}.$
\end{proof}

Now we can prove:

\begin{theo}\label{thgeneric}
Let $X^n\subset \C^p$ be a smooth affine variety of dimension $n$. Let $S_1,\ldots, S_k$ be locally closed smooth algebraic submanifolds of $J^{q_1,\ldots,q_r}(X, \C^m)$.
Let $d_1,\ldots,d_m$ be integers such that $d_i\ge \sum_{j=1}^r q_j+r-1$ for $i=1,\ldots,m$.
Then there is a Zariski open dense subset
$U\subset \Omega_X(d_1,\ldots,d_m)$
such that for every $F\in U$ we
have $$j^{q_1,\ldots,q_r}(F|_X)\pitchfork S_i, \ for \ i=1,\ldots,k.$$
\end{theo}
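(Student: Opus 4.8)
The plan is to realize the statement as an instance of the parametric (Thom) transversality theorem, with the interpolation Lemma \ref{lem_interpolacja} supplying the submersivity that drives it. First I would work locally on $X$: by the complete-intersection description recalled above, every point of $X$ has a Zariski neighbourhood $V$ carrying global holomorphic coordinates $x_1,\ldots,x_n$, and in such a chart $J^{q_i}(V,\C^m)$ is identified with $V\times\C^m\times(\C^{N_{q_i}})^m$. Since $\Omega_X(d_1,\ldots,d_m)$ is the image of the finite-dimensional linear space $\Omega_n(d_1,\ldots,d_m)$ under restriction, it is itself an affine space, so I may treat $F$ as a free parameter ranging over a smooth irreducible variety.

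Next I would introduce the total evaluation map
$$\Phi\colon (X^r\setminus\Diag)\times\Omega_X(d_1,\ldots,d_m)\ni(x_1,\ldots,x_r,F)\longmapsto j^{q_1,\ldots,q_r}(F)(x_1,\ldots,x_r)\in J^{q_1,\ldots,q_r}(X,\C^m),$$
and show that it is a submersion. Since the jet of $F|_X$ depends linearly on the coefficients of $F$ while the base points are unaffected by varying $F$, the partial differential of $\Phi$ in the $F$-direction sends a variation $H\in\Omega_X$ to $j^{q_1,\ldots,q_r}(H)(x_1,\ldots,x_r)$ in the fibre directions; surjectivity onto the fibre $\prod_{i=1}^{r}\bigl(\C^m\times(\C^{N_{q_i}})^m\bigr)$ therefore amounts to prescribing, independently at each distinct $x_i$ and for each of the $m$ target coordinates, the value and all partial derivatives up to order $q_i$. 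This is exactly what Lemma \ref{lem_interpolacja} delivers: for each index $i$ it produces a polynomial of degree at most $\sum_{j=1}^{r}q_j+r-1$ whose $q_i$-jet at $x_i$ is arbitrary while its $q_j$-jets at the remaining points vanish; summing such polynomials over $i$ and choosing each target coordinate separately realizes any prescribed jet datum. The hypothesis $d_\ell\ge\sum_{j=1}^{r}q_j+r-1$ guarantees every such $H$ lies in $\Omega_X(d_1,\ldots,d_m)$, so the $F$-partial of $\Phi$ is already onto, $\Phi$ is a submersion, and in particular $\Phi\pitchfork S_i$ for every $i$.

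From here I would invoke the parametric-transversality mechanism in its algebraic form. Because $\Phi$ is a submersion, each $Z_i:=\Phi^{-1}(S_i)$ is a locally closed smooth subvariety, and $j^{q_1,\ldots,q_r}(F)$ is transversal to $S_i$ at $(x_1,\ldots,x_r)$ precisely when $(x_1,\ldots,x_r,F)$ is a regular point of the projection $p_i\colon Z_i\to\Omega_X(d_1,\ldots,d_m)$. Applying generic smoothness over $\C$ (Bertini/Sard in characteristic zero) to each $p_i$ yields a dense Zariski open set $U_i$ of regular values, and for $F\in U_i$ one has $j^{q_1,\ldots,q_r}(F)\pitchfork S_i$. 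As this good set is cut out by algebraic rank conditions it is constructible; being dense it contains a Zariski open dense subset, and $U=\bigcap_{i=1}^{k}U_i$ is the asserted set.

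The main obstacle is the submersivity step, and specifically the interplay between the intrinsic jets of $F|_X$ and the restriction map $\Omega_n\to\Omega_X$: one must check that the interpolating polynomials furnished by Lemma \ref{lem_interpolacja} in the local coordinates $x_1,\ldots,x_n$ genuinely arise as restrictions to $X$ of ambient polynomials of the same degree (they do, since a polynomial in $x_1,\ldots,x_n$ is already an ambient polynomial), and that the vanishing conditions at the other base points decouple the contributions at different $x_i$. Once the degree bookkeeping of Lemma \ref{lem_interpolacja} is matched against the hypothesis on the $d_\ell$, the passage from fibrewise submersivity to Zariski-generic transversality is routine.
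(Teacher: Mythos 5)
Your proposal is correct and takes essentially the same route as the paper's proof: both show that the parametrized multi-jet evaluation map is a submersion by exploiting its linearity in $F$ and invoking Lemma \ref{lem_interpolacja} for surjectivity of the $F$-partial (with general $X$ handled, exactly as in the paper, by interpolating with polynomials in distinguished local coordinates that are restrictions of ambient polynomials), and then conclude by parametric transversality plus the density-and-constructibility argument. The only cosmetic differences are that the paper reduces to $m=1$ and parametrizes by $\Omega_n$ rather than by $\Omega_X$ directly.
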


\begin{proof}
First consider the case $X=\C^n$. For simplicity we can take $m=1$ (the general case is analogous). 
It is enough to prove that the mapping $\Omega_n(d_1)\times (\prod^r\C^n \setminus \Diag)\ni (F,x)\mapsto { J}^{q_1,\ldots,q_r}(F)(x)\in { J}^{q_1,\ldots,q_r}(\C^n,\C)$ is a submersion.
Let us observe that if we have a mapping $G: P\times Z \to P\times W$ of the form $G(p,z)=(p,g(p,z))\in P\times W$, then $G$ is a submersion if the mapping $Z\ni z\mapsto g(p,z)\in W$ is a submersion for every fixed $p\in P$. Now take $P=\prod^r \C^n\setminus\Diag$, $Z=\Omega_n(d_1)$ and $W$ in such way that $P\times W= { J}^{q_1,\ldots,q_m}(\C^n,\C)$. Put $G(p,F)=J^{q_1,\ldots,q_r}(F)(p)$. From the previous statement we have that $G$ is a submersion if $G(p,\cdot)$ is a submersion for every fixed $p$. But since $G(p,\cdot)$ is linear for fixed $p$ 
it is enough to prove that $G(p,\cdot)$ is surjective for fixed $p$. Hence our statement reduces to the Lemma \ref{lem_interpolacja}.

Now assume that $X$ is a general affine smooth variety. In generic system of linear coordinates, for a given points $a_1,...,a_r\in X$, we
can find a Zariski open subset $U$, which have global local coordinates
$x_{1},\ldots,x_{n}$, i.e.,  $x_{1},\ldots,x_{n}$ are holomorphic (local) coordinates in $U.$ 

Let $\Omega_n(d_1,\ldots,d_m)(x_{1},\ldots,x_{n})\subset \Omega_n(d_1,\ldots, d_m)$ denote the set of polynomial mappings, which depend only on variables $x_{1},\ldots,x_{n}.$ Note that we have $\Omega_n(d_1,\ldots,d_m)\cong \Omega_n(d_1,\ldots,d_m)(x_{1},\ldots,x_{n})\oplus W,$ where mappings in $W$ have coefficients  different from coefficients in
$\Omega_n(d_1,\ldots,d_m)$ $(x_{1},\ldots,x_{n})$. Note that
$W|_{U}$  is  the subset of holomorphic mappings locally depending only on variables $x_{1},\ldots,x_{n}$ and which have coefficients independent from coefficients in
$\Omega_n(d_1,\ldots,d_m)$ $(x_{1},\ldots,x_{n})$.
Now we can prove as above that $\Psi: \Omega_n(d_1,\ldots,d_m)\times U\ni
(F,x)\mapsto j^{q_1,\ldots,q_r}(F|_{U})(x)\in J^{q_1,\ldots,q_r}(U,\C^m)$ is a submersion (in the proof it is enough to use only parameters from $\Omega_n(d_1,\ldots,d_m)(x_{1},\ldots,x_{n})$.

Fix $1\le i\le k$. By the transversality theorem
with a parameter the set of polynomials $F\in
\Omega_n(d_1,\ldots,d_n)$ such that $j^{q_1,\ldots,q_r}(F|_X)$ is transversal to $S_i$
is dense in $\Omega_n(d_1,\ldots,d_m)$. On the other hand this set is
constructible in $\Omega_n(d_1,\ldots,d_m)$.

We conclude that there is a
Zariski open dense subset $V_i\subset \Omega_n(d_1,\ldots,d_m)$ such
that for every $F\in V_i$ we have $j^{q_1,\ldots,q_r}(F|_X)\pitchfork S_i$. Now it
is enough to take
$U=\bigcap^k_{i=1} V_i$.
\end{proof}

\begin{defi}
Let $\Sigma^k\subset J^1(X,\C^n)$ denote the subvariety of $1$-jets of
corank $k$. Let $F\in \Omega_X(d_1,\ldots,d_n)$. We say that
$F$ is one-generic if $F$ is proper and $j^1(F)\pitchfork \Sigma^1$.
\end{defi}

By Corollary \ref{wazne} the subset of {\it one-generic} mappings
contains a Zariski open dense subset of
$\Omega_X(d_1,\ldots,d_n)$. We have the following   result:

\begin{theo}\label{disc}
Let $X$ be a smooth complex manifold of dimension $n$. Let $F\colon X\to \C^n$ be a proper holomorphic one-generic mapping. Let $C(F)$
denote the set of critical points of $F$. Then there is an open
and dense subset $U\subset C(F)$ such that for every $a\in U$ the
germ $F_a\colon (X,a)\to (\C^n, F(a))$ is holomorphically equivalent
to a fold.
\end{theo}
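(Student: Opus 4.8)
The plan is to argue locally along $C(F)$: put $F$ into a pre-normal form at a corank-one point, extract the fold condition as a single second-order nondegeneracy, and then invoke properness to show that the points where it fails are nowhere dense in $C(F)$.

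First I would look at a corank-one critical point $a$, where $dF_a$ has rank $n-1$. Choosing local holomorphic coordinates in source and target and solving for the $n-1$ independent components (implicit function theorem), the germ becomes $(x_1,\dots,x_{n-1},y)\mapsto(x_1,\dots,x_{n-1},h(x,y))$ with $\partial h/\partial y(a)=0$. Here the critical set is $\{\partial h/\partial y=0\}$ and $\ker dF$ is spanned by $\partial/\partial y$ along it. The hypothesis $j^1(F)\pitchfork\Sigma^1$ says precisely that $d(\partial h/\partial y)\neq 0$ on this set, so $\Sigma^1(F)$ is a smooth hypersurface. I would then recall the classical fold criterion: $a$ is a fold exactly when the kernel line is transverse to $\Sigma^1(F)$, i.e. $\partial^2 h/\partial y^2(a)\neq 0$; under this condition the holomorphic Morse lemma with parameters in $y$ gives $h=c(x)+\tilde y^2$ in new coordinates, and a further change of coordinates in the target turns $F$ into $(x,\tilde y^2)$, a fold.

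Next I would set $U=\Sigma^{1,0}(F)$, the corank-one critical points with $\partial^2 h/\partial y^2\neq 0$. This is open in $C(F)$, and its complement $B$ (the corank $\ge 2$ locus together with the corank-one points failing the fold condition) is analytic. It remains to show $U$ is dense, equivalently that $B$ contains no irreducible component of $C(F)$. This density is the step I expect to be the main obstacle: the first-order hypothesis $j^1(F)\pitchfork\Sigma^1$ says nothing directly about second order, so the leverage must come entirely from properness.

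The device I would use is the following. Since $F$ is proper we have $\det dF\not\equiv 0$, so $C(F)=\{\det dF=0\}$ has pure dimension $n-1$. Suppose some component $W$ had its generic point in $B$. At that generic point either $\rank dF\le n-2$, or $\rank dF=n-1$ with $\ker dF$ tangent to $W$ (this is how the fold condition fails, since $\Sigma^{1,1}(F)=\Sigma^1(F|_{\Sigma^1(F)})$). In both cases the restriction $F|_W$ has differential of rank $\le n-2$, so $\dim F(W)\le n-2<n-1=\dim W$; hence $F|_W$, and therefore $F$, has a positive-dimensional fiber. As $X$ is affine (Stein), such a fiber is a non-compact closed subvariety, contradicting properness — equivalently, a proper holomorphic map from an affine manifold is finite and cannot drop dimension on $W$. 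Thus $B$ meets each component of $C(F)$ in a proper analytic subset, $U$ is open and dense, and by the first step every germ $F_a$ with $a\in U$ is a fold.
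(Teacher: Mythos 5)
Your proposal is correct in substance but takes a genuinely different route from the paper. The paper's proof contains no local normal-form analysis at all: it sets $U=C(F)\setminus F^{-1}(\Sing(\Delta))$, where $\Delta=F(C(F))$ is analytic by properness, then quotes Corollary 1.11 of \cite{jel} to conclude that a germ whose discriminant is smooth is biholomorphically equivalent to a $k$-fold $(x_1,\ldots,x_n)\mapsto(x_1^k,x_2,\ldots,x_n)$, and only at the very end uses one-genericity: transversality of $j^1(F)$ to $\Sigma^1$ forces $\phi\circ j^1(F)=kx_1^{k-1}$ to be a submersion at $0$, which is possible only for $k=2$. You instead reprove the fold criterion by hand (corank-one reduction plus the holomorphic Morse lemma with parameters, with the fold condition read as transversality of $\ker dF$ to $\Sigma^1(F)$) and obtain density from a dimension count on a hypothetical bad component $W$ of $C(F)$. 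Your version is more self-contained and, notably, actually supplies an argument for the density of $U$, which the paper merely asserts; the paper's version is shorter, outsources the classification to \cite{jel}, and isolates cleanly the single point where one-genericity is needed (excluding $k\ge 3$).

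The one point to flag is your appeal to ``$X$ is affine (Stein)'': the theorem assumes only that $X$ is a smooth complex manifold, and for such $X$ properness does not make positive-dimensional fibers non-compact. This cannot be fully repaired, because the statement is in fact false at that level of generality: the blow-down $\mathrm{Bl}_0\C^2\to\C^2$ is proper and one-generic (in a chart it is $(u,v)\mapsto(u,uv)$, with $\det dF=u$ a submersion along the critical set), yet its critical set is the exceptional $\mathbb{P}^1$ and no germ along it is a fold, since those germs are not finite. Some quasi-finiteness hypothesis is unavoidable, and the paper's own proof relies on it just as tacitly: Corollary 1.11 of \cite{jel} is a statement about finite germs (a germ contracting a curve is not a $k$-fold), and the asserted density of the paper's $U$ likewise requires that no component of $C(F)$ be contracted into $\Sing(\Delta)$. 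Since every application in the paper has $X$ affine, your assumption matches the intended setting; just promote it to an explicit hypothesis (``$X$ Stein'' or ``$F$ has finite fibers'') instead of introducing it silently in the middle of the density step.
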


\begin{proof}
Let $\Delta=F(C(F))$ be the discriminant of $F$. Take $U=C(F)\setminus
F^{-1}(\Sing(\Delta))$. The set $U$ is an open dense subset of
$C(F)$. Take a point $a\in U$ and consider the germ $F_a\colon
(X,a)\to (\C^n, F(a))$. By the choice of the point $a$ the germ
of the discriminant of $F_a$ is smooth. Hence by \cite{jel}, Corollary 1.11, the
germ $F_a$ is biholomorphically equivalent to a $k$-fold:
$(\C^n,0)\ni (x_1,\ldots, x_n)\mapsto (x_1^k,x_2,\ldots, x_n)\in
(\C^n,0)$. In particular ${\rm corank} [F_a]=1.$

Now note that
$J^1(\C^n,\C^n)\cong \C^n\times \C^n\times M(n,n)$, where $M(n,n)=\{
[a_{ij}], \ 1\le i,j\le n\}$ is the set of $n\times n$ matrices.
In these coordinates the set $\Sigma^1$ is given as $\{ (x,y,m) : \det
[m_{ij}]=\phi(x,y,m)=0\}$ on the open subset $\{ (x,y,m) :  {\rm corank} [m_{ij}]\le 1\}$. Since the
mapping $j^1(F)$ is transversal to $\Sigma^1$ the mapping $\phi\circ
j^1(F)=kx_1^{k-1}$ has to be a submersion at $0$. This is possible
only for $k=2$.
\end{proof}

\section{Plane mappings}\label{secCF}
Here we will study the set $\Omega_2(d_1,d_2)$.
Let us denote coordinates in $J^1(\C^2,\C^2)$ by
$$(x,y,f,g,f_x,f_y,g_x,g_y).$$ For a mapping
$F=(f,g)\in\Omega_2(d_1,d_2)$, we have $$j^1(F)=\left(x,y,f(x,y),g(x,y),
\frac{\partial{f}}{\partial{x}}(x,y),
\frac{\partial{f}}{\partial{y}}(x,y),
\frac{\partial{g}}{\partial{x}}(x,y),
\frac{\partial{g}}{\partial{y}}(x,y)\right),$$ which justifies our
notation. The set $\Sigma^1$ is given by the equation
$\phi(x,y,f,g,f_x,f_y,g_x,g_y)=f_xg_y-f_yg_x=0$. Since $\Sigma^1$
describes elements of rank one it is easy to see that it is a
smooth (non-closed) subvariety of $J^1(\C^2,\C^2)$.

Now we would like to describe the set $\Sigma^{1,1}$ effectively. We restrict our
attention only to sufficiently general jets. In the space
$J^2(\C^2,\C^2)$ we introduce coordinates
$$(x,y,f,g,f_x,f_y,g_x,g_y,f_{xx}, f_{yy}, f_{xy}, g_{xx}, g_{yy}, g_{xy}).$$
A generic mapping $F$ satisfies $\rank d_a F\ge 1$ for every $a$ (because
$\codim \Sigma^2=4$). We can assume that $F=(f,g)$ and $\nabla_a
f\not=0$. The critical set of $F$ is exactly the set $\Sigma^1(F)$
and it has a reduced equation
$\frac{\partial{f}}{\partial{x}}(x,y)\frac{\partial{g}}{\partial{y}}(x,y)-
\frac{\partial{f}}{\partial{y}}(x,y)\frac{\partial{g}}{\partial{x}}(x,y)=0$,
which we write for simplicity as $f_x g_y-f_y g_x=0$. In particular
the tangent line to $\Sigma^1(F)$ is given as
$$(f_{xx}g_y+f_xg_{xy}-f_{xy}g_x-f_yg_{xx})v+(f_{xy}g_y+f_xg_{yy}-f_{yy}g_x-f_yg_{xy})w=0.$$
Consequently the condition for $[F_a]\in \Sigma_{1,1}$ is:
$$f_xg_y-f_yg_x=0$$ and
$$(f_{xx}g_y+f_xg_{xy}-f_{xy}g_x-f_yg_{xx})f_y-(f_{xy}g_y+f_xg_{yy}-f_{yy}g_x-f_yg_{xy})f_x=0.$$
Let us note that the last equation contains terms $g_{xx}f_y^2$
and $g_{yy}f_x^2$ hence for $\nabla f\not=0$ these two equations
form a complete intersection. In general, if we omit the
assumption $\nabla f\not=0$ the set $\Sigma^{1,1}$ is given in $J^2(\C^2,\C^2)$
by three equations:
$$L_1:=f_xg_y-f_yg_x=0,$$
$$L_2:=(f_{xx}g_y+f_xg_{xy}-f_{xy}g_x-f_yg_{xx})f_y-(f_{xy}g_y+f_xg_{yy}-f_{yy}g_x-f_yg_{xy})f_x=0,$$
and
$$L_3:=(f_{xx}g_y+f_xg_{xy}-f_{xy}g_x-f_yg_{xx})g_y-(f_{xy}g_y+f_xg_{yy}-f_{yy}g_x-f_yg_{xy})g_x=0.$$
As above by symmetry the set $\Sigma^{1,1}$ is smooth and locally is
given as a complete intersection of either $L_1, L_2$ or $L_1,
L_3$.

We will denote by $J, J_{1,1}, J_{1,2}$ curves given by $L_1\circ
j^2(F)=0$, $L_2\circ j^2(F)=0$ and $L_3\circ j^2(F)=0$, respectively. We will also
identify these curves with their equations.

\begin{re}
{\rm These formulas give a description of $\Sigma^{1,1}$ also in the case of a general affine surface $X$, however, it might be only locally in the Zariski topology of $J^2(X,\C^2)$.}
\end{re}

\begin{defi}
Let $F\colon (\C^2,a)\to (\C^2,F(a))$ be a germ of a holomorphic mapping.  We say
that $F$ has a {\it fold} at $a$ if $F$ is biholomorphically
equivalent to the mapping $(\C^2,0)\ni (x,y)\mapsto (x, y^2)\in
(\C^2,0)$. Moreover, we say
that $F$ has a {\it cusp} at $a$ if $F$ is biholomorphically
equivalent to the mapping $(\C^2,0)\ni (x,y)\mapsto (x, y^3+xy)\in
(\C^2,0)$.
\end{defi}

\begin{re}
{\rm It is well known that $F$ has a fold at $a$ if $j^2(F)\pitchfork \Sigma^1$ at $a$ and $j^2(F)(a)\in\Sigma^{1,0}$, and $F$ has a cusp if $j^2(F)\pitchfork \Sigma^1, \Sigma^{1,1}$ and $j^2(F)\in\Sigma^{1,1}$.}
\end{re}

A direct consequence of Theorem \ref{trans} is:

\begin{theo}\label{cor1}(cf. \cite{wh})
Let $X\subset \C^n$ be a smooth algebraic surface, then there is a non-empty Zariski open subset $U\subset \Omega_X(d_1,d_2)$ such that for every $F\in U$ the mapping $F$ has only folds and cusps as singularities and the discriminant $F(C(F))$ has only cusps and nodes as singularities.
\end{theo}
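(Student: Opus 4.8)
The plan is to derive both assertions from transversality to a short, explicit list of Thom--Boardman strata, invoking Theorem \ref{trans} once in the ordinary jet space $J^2(X,\C^2)$ to control the singularities of $F$ in the source, and a second time in the multi-jet space to control how critical points are identified in the target; the local pictures are then read off from the two Remarks above together with Theorem \ref{disc}.

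First I would treat the source. Apply Theorem \ref{trans} to make a generic $F\in\Omega_X(d_1,d_2)$ simultaneously transversal to $\Sigma^1$, $\Sigma^2$, $\Sigma^{1,1}$ and $\Sigma^{1,1,1}$ in $J^2(X,\C^2)$. Since $\dim X=2$ and $\codim\Sigma^2=4$, transversality forces $\Sigma^2(F)=\emptyset$, so $F$ has corank at most $1$ everywhere; likewise $\codim\Sigma^{1,1,1}=3>2$ gives $\Sigma^{1,1,1}(F)=\emptyset$. Hence $C(F)=\Sigma^1(F)$ is a smooth curve, $\Sigma^{1,1}(F)$ is a finite subset of it, and by the two Remarks above $F$ is a fold at every point of $C(F)\setminus\Sigma^{1,1}(F)$ and a cusp at every point of $\Sigma^{1,1}(F)$. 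This proves the first assertion.

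For the discriminant I would pass to the multi-jet space ${_sJ^2(X,\C^2)}$ and use the target-diagonal strata $(\Sigma^{I_1},\ldots,\Sigma^{I_s})$ formed with respect to the projection $J^2(X,\C^2)\to\C^2$ recording the image point; these are modular by the remark in the proof of Theorem \ref{trans}, so Theorem \ref{trans} applies. Imposing transversality to $(\Sigma^1,\Sigma^1)$ makes the locus of pairs $(a_1,a_2)$ with $a_1\neq a_2$, $a_i\in C(F)$ and $F(a_1)=F(a_2)$ into a $0$-dimensional set: in the $4$-dimensional space of pairs the two corank conditions contribute codimension $2$ and the coincidence $F(a_1)=F(a_2)$ contributes codimension $2$. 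The same count shows that $(\Sigma^{1,1},\Sigma^1)$, $(\Sigma^{1,1},\Sigma^{1,1})$ and the triple stratum $(\Sigma^1,\Sigma^1,\Sigma^1)$ all have codimension exceeding the available dimension, so generically they are empty; thus no critical value is shared with a cusp and there are no triple points. Near a fold the normal form $(x,y)\mapsto(x,y^2)$ shows $F(C(F))$ is locally smooth, and near a cusp the normal form $(x,y)\mapsto(x,y^3+xy)$ shows $F(C(F))$ is locally an ordinary cusp; so away from the finite double-fold locus $\Delta(F)$ has only smooth points and cusps. At each double-fold point I would further impose transversality to the substratum of $(\Sigma^1,\Sigma^1)$ on which the two image branches of $F(C(F))$ share a tangent direction: this tangency is one extra condition, hence generically avoided, so the two smooth branches cross transversally and produce a node. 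Intersecting the finitely many Zariski open dense sets obtained above yields the desired $U$.

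The hard part will be the discriminant analysis, not the source analysis. One must phrase ``two fold branches meet transversally at an isolated image point'' purely as transversality to suitable modular target-diagonal strata, verify that each such stratum is genuinely modular so that Theorem \ref{trans} is available, and then carry out the codimension bookkeeping that simultaneously excludes triple points, tangential double points, and coincidences involving cusps. Once transversality to the four Thom--Boardman strata in $J^2(X,\C^2)$ is granted, the fold/cusp dichotomy in the source is an immediate dimension count.
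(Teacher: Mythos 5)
Your source analysis is exactly the paper's: Theorem \ref{trans} applied to the Thom--Boardman strata, the codimension counts that kill $\Sigma^2$ and $\Sigma^{1,1,1}$, and the Remarks giving the fold/cusp normal forms. (Minor slip: $\Sigma^{1,1,1}$ lives in $J^3$, not $J^2$; harmless, since its codimension $3>2$ still forces $\Sigma^{1,1,1}(F)=\emptyset$.) Your exclusion of cusp--fold coincidences and of triple points via $(\Sigma^{1,1},\Sigma^1)$ and $(\Sigma^1,\Sigma^1,\Sigma^1)$ also matches what the paper records later in Lemma \ref{bir}.

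The gap is precisely in the step you yourself flag as ``the hard part'': the node at a double fold. Your argument rests on transversality to an extra substratum of $(\Sigma^1,\Sigma^1)$ (pairs of fold jets with common image whose image branches are tangent). This is not a Thom--Boardman stratum, so the modularity remark in the proof of Theorem \ref{trans} does not cover it, and you leave its smoothness and modularity unverified; falling back on Theorem \ref{thgeneric} instead would impose degree restrictions (roughly $d_i\ge 5$ for $2$-jets at two points) that the statement does not assume. The missing idea --- and the route the paper takes, see Lemma \ref{bir}(3) --- is that no extra stratum is needed: transversality to $(\Sigma^1,\Sigma^1)=(\Sigma^1\times\Sigma^1)\cap\diag_Y$ already forces normal crossings. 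Indeed, at a pair of folds $a_1\neq a_2$ with $F(a_1)=F(a_2)$, a pair $(v_1,v_2)\in T_{a_1}X\times T_{a_2}X$ is carried by $d({_2j^1}F)$ into the tangent space of this stratum exactly when $v_i\in T_{a_i}C(F)$ for $i=1,2$ and $dF_{a_1}v_1=dF_{a_2}v_2$; since the stratum has codimension $4$ in the $16$-dimensional multi-jet space and the source of pairs is $4$-dimensional, transversality is equivalent to this intersection being trivial, which holds if and only if the lines $dF_{a_1}(T_{a_1}C(F))$ and $dF_{a_2}(T_{a_2}C(F))$ --- the tangents to the two discriminant branches, which are genuine lines because at a fold $\ker dF_{a_i}$ is transverse to $T_{a_i}C(F)$ --- are distinct. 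So tangency of the two branches is itself a failure of transversality to $(\Sigma^1,\Sigma^1)$, and your proof becomes complete (and coincides with the paper's) once the extra stratum is replaced by this linear-algebra observation.
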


Now we compute the number of cusps of a generic polynomial mapping
$F\in \Omega_2(d_1,d_2)$. To do this we need a series of lemmas:

\begin{lem}\label{infty0}
Let $L_\infty$ denote the line at infinity of
$\C^2$. There is a non-empty open subset $V\subset
\Omega_2(d_1,d_2)$ such that for all $(f,g)\in V:$

\begin{enumerate}
\item $\left\{\frac{\partial f}{\partial x}=0\right\}\pitchfork \left\{\frac{\partial
f}{\partial y}=0\right\}$, $\left\{\frac{\partial g}{\partial x}=0\right\}\pitchfork \left\{\frac{\partial
g}{\partial y}=0\right\}$,
\item $\overline{\left\{\frac{\partial f}{\partial x}=0\right\}}\cap \overline{\left\{\frac{\partial
f}{\partial y}=0\right\}}\cap L_\infty=\emptyset$, $\overline{\left\{\frac{\partial g}{\partial x}=0\right\}}\cap \overline{\left\{\frac{\partial
g}{\partial y}=0\right\}}\cap L_\infty=\emptyset$.
\end{enumerate}
\end{lem}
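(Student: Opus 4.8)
The plan is to prove the two genericity statements by a dimension count combined with the transversality framework already developed, specializing Theorem \ref{thgeneric} to suitable submanifolds of jet space and the target of partial-derivative maps. Observe first that for a mapping $(f,g)\in\Omega_2(d_1,d_2)$ the four partial derivatives $f_x,f_y,g_x,g_y$ are polynomials of degrees at most $d_1-1,d_1-1,d_2-1,d_2-1$. The conditions in (1) and (2) concern only $f$ and $g$ separately, so by symmetry it suffices to establish each claim for one of the two pairs, say for $f$, and then intersect the resulting Zariski open sets. The coefficients of $f_x$ and $f_y$ are linear in the coefficients of $f$, and crucially, as one varies $f$ over all polynomials of degree $\le d_1$, the pair $(f_x,f_y)$ ranges over all pairs of polynomials of degree $\le d_1-1$ that arise as a gradient, i.e.\ satisfy the compatibility $\partial_y f_x=\partial_x f_y$. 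This constraint is harmless for a generic statement because one can independently perturb the coefficients enough to move the two curves into general position; this is exactly the sort of surjectivity-of-jet statement that Lemma \ref{lem_interpolacja} and the proof of Theorem \ref{thgeneric} are designed to deliver.

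For statement (1), the plan is to apply Theorem \ref{thgeneric} to $\C^2$ with the evaluation map $\Psi\colon\Omega_2(d_1,d_2)\times\C^2\to\C^2$ sending $(f,g,x,y)\mapsto (f_x(x,y),f_y(x,y))$, taking $S=\{0\}\subset\C^2$ as the target submanifold. The transversality $j^0(f_x,f_y)\pitchfork\{0\}$ at every common zero is precisely the statement that $\{f_x=0\}$ and $\{f_y=0\}$ meet transversally, since transversality of the map to the origin forces the Jacobian of $(f_x,f_y)$ to have rank $2$ along the fiber, hence the two curves are smooth and cross transversally. I would verify that the evaluation map is a submersion in the $f$-coordinates alone by the same interpolation argument used in Theorem \ref{thgeneric}: at a fixed point one can prescribe the values of $f_x$ and $f_y$ independently (the degree bound $d_1\ge 2$, implicit here, guarantees enough free coefficients), so surjectivity of the linear map $f\mapsto (f_x(x,y),f_y(x,y))$ holds. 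The resulting set of good $f$ is Zariski open and dense, and intersecting with the analogous set for $g$ gives the open subset satisfying (1).

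For statement (2), the plan is to pass to the projective completion $\P^2\supset\C^2$ and examine the behavior along $L_\infty$. Write $\widehat{f_x}$ and $\widehat{f_y}$ for the homogenizations (or the closures of the affine curves) inside $\P^2$. The claim is that for generic $f$ the two projective curves $\overline{\{f_x=0\}}$ and $\overline{\{f_y=0\}}$ share no common point on the line at infinity. Since each of the two curves meets $L_\infty$ in a finite set of points determined by the top-degree parts of $f_x$ and $f_y$, the condition of having a common point at infinity is a single algebraic (resultant-type) condition on the coefficients of the leading homogeneous part $f_h$ of $f$. The task is therefore to exhibit at least one polynomial $f$ for which the two curves are disjoint at infinity, which forces the bad locus to be a proper closed subvariety and hence its complement Zariski open and dense; a simple explicit choice of $f_h$ whose partials have coprime leading forms suffices.

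I expect the main obstacle to be the gradient compatibility constraint $\partial_y f_x=\partial_x f_y$: unlike the abstract setting of Theorem \ref{thgeneric}, the pair $(f_x,f_y)$ is not an arbitrary pair of polynomials but a genuine gradient, so I must check that the admissible perturbations still suffice to achieve the required general position, both at finite common zeros (for (1)) and at infinity (for (2)). The cleanest way around this is to note that adding to $f$ a generic polynomial of the form $\sum c_{ij}x^iy^j$ perturbs $f_x$ and $f_y$ through coefficients that, while linked, still span a large enough space to realize transversal intersection and disjointness at infinity; verifying this linear-algebra surjectivity is the one genuinely technical point, and it is handled exactly by the interpolation scheme of Lemma \ref{lem_interpolacja}.
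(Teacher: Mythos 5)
Your proof of (1) is essentially the paper's argument: the paper applies Theorem \ref{thgeneric} to the smooth codimension-two submanifold $\{f_x=f_y=0\}\subset J^1(\C^2,\C^2)$, which is precisely the jet-space formulation of your evaluation-map-transversal-to-$\{0\}$ argument, and your resolution of the gradient-compatibility worry (surjectivity of $f\mapsto(f_x(a),f_y(a))$ using the linear coefficients of $f$) is exactly what makes that application legitimate. For (2), however, you take a genuinely different route. The paper also reduces to the leading homogeneous part $f_h\in H_{d_1}$, but then shows by an explicit coefficient computation that $\Psi\colon H_{d_1}\times(\C^2\setminus\{(0,0)\})\ni(f,x,y)\mapsto(f_x(x,y),f_y(x,y))\in\C^2$ is a submersion; parametric transversality then makes $\Psi_f^{-1}(0,0)$ zero-dimensional or empty for generic $f$, and homogeneity (the zero set is a cone) excludes the zero-dimensional case, forcing emptiness. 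You instead observe that the bad locus is contained in the vanishing set of the resultant of $(f_h)_x$ and $(f_h)_y$ — a single polynomial condition on the coefficients — so it suffices to produce one witness, e.g.\ $f_h=x^{d_1}+y^{d_1}$, whose partials $d_1x^{d_1-1}$ and $d_1y^{d_1-1}$ have no common zero in $\mathbb{P}^1$. Your route is more elementary (no submersion computation), but you should fix one imprecision: when $(f_h)_x$ or $(f_h)_y$ vanishes identically, or when $\deg f<d_1$, the points at infinity of the affine curves $\{f_x=0\}$, $\{f_y=0\}$ are governed by lower-order terms, so the honest statement is that the bad locus is \emph{contained in} the resultant hypersurface (the resultant vanishes automatically in these degenerate cases), not that it coincides with it; this containment is all your argument needs, since the complement of the resultant locus is then a nonempty Zariski open set of good mappings. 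In exchange for its extra computation, the paper's approach stays uniform with the transversality machinery used throughout the article and absorbs the degenerate leading forms without special mention; yours is shorter and self-contained, provided the explicit witness is actually written down.
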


\begin{proof}
The case $d_1=1$ is trivial so assume $d_1>1$. Let us note that
the set $S\subset J^1(\C^2,\C^2)$ given by $\{ f_x=f_y=0\}$ is smooth.
Hence (1) follows from Theorem \ref{thgeneric}. To prove (2) it is
enough to assume that $f\in H_d$, where $H_d$ denotes the set of
homogenous polynomials of two variables of degree $d$.  Let $\Psi
\colon H_d\times (\C\times \C)\setminus \{0,0\}\ni (f, x,y)\mapsto
(\frac{\partial f}{\partial x}(x,y), \frac{\partial f}{\partial
y}(x,y))\in \C^2$. It is easy to see that $\Psi$ is a submersion.
Indeed, if $f=\sum a_i x^{d-i}y^i$ then $f_x:=\frac{\partial
f}{\partial x}(x,y)=da_0x^{d-1}+\ldots+a_{d-1}y^{d-1},
f_y:=\frac{\partial f}{\partial
y}(x,y)=a_1x^{d-1}+\ldots+da_dy^{d-1}$. Since $(x,y)\not=(0,0)$ we
can assume by symmetry that $y\not=0$. Now
$\pa{f_x}{a_{d-1}}=y^{d-1}, \pa{f_x}{a_d}=0,
\pa{f_y}{a_d}=dy^{d-1}$. Thus
$\pa{(f_x,f_y)}{(a_{d-1},a_d)}=dy^{2(d-1)}\not=0$.

Hence for a generic polynomial $f\in H_d$ the mapping $\Psi_f\colon
 (\C\times \C)\setminus \{(0,0)\}\ni (x,y)\mapsto (\frac{\partial f}{\partial
x}(x,y), \frac{\partial f}{\partial y}(x,y))\in \C^2$ is
transversal to the point $(0,0)$. In particular $\Psi_f^{-1}(0,0)$
is either zero-dimensional or the empty set. Since $f$ is a
homogenous polynomial the first possibility is excluded. This
means that  $\overline{\left\{\frac{\partial f}{\partial
x}=0\right\}}\cap \overline{\left\{\frac{\partial f}{\partial
y}=0\right\}}\cap L_\infty=\emptyset.$
\end{proof}

\begin{lem}\label{infty}
Let $L_\infty$ denote the line at infinity of
$\C^2$. There is a non-empty open subset $V\subset \Omega_2(d_1,d_2)$ such
that for all $F=(f,g)\in V$:

\begin{enumerate}
\item $\overline{J(F)}\cap \overline{J_{1,1}(F)}\cap L_\infty=\emptyset$,
\item $\overline{J(F)}\pitchfork L_\infty$.
\end{enumerate}
Here $\overline{J(F)}$ denotes the projective closure of the set $\{ J(F)=0\}$ etc.
\end{lem}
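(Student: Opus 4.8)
The plan is to reduce both statements to properties of the leading homogeneous parts $f_{d_1}$ and $g_{d_2}$ of $f$ and $g$, since everything that happens on $L_\infty$ is governed by top-degree terms. First I would record the elementary fact that for an affine plane curve $\{Q=0\}$ of degree $D$ with top-degree form $Q_{\mathrm{top}}$, the points of $\overline{\{Q=0\}}\cap L_\infty$ are exactly the zeros of $Q_{\mathrm{top}}$ on $L_\infty\cong\mathbb{P}^1$, and $\overline{\{Q=0\}}$ is transversal to $L_\infty$ at such a point precisely when it is a \emph{simple} zero of $Q_{\mathrm{top}}$ (homogenize $Q$ and compute the projective gradient at $[a:b:0]$: it equals $((Q_{\mathrm{top}})_x,(Q_{\mathrm{top}})_y,\ast)$, whose first two entries vanish iff the zero is multiple). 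Applying this to $L_1\circ j^2(F)$ and $L_2\circ j^2(F)$, whose top-degree forms are
$$P_1:=(f_{d_1})_x(g_{d_2})_y-(f_{d_1})_y(g_{d_2})_x,\qquad \deg P_1=d_1+d_2-2,$$
and a homogeneous form $P_2$ of degree $2d_1+d_2-4$ obtained from $L_2$ by substituting $f_{d_1},g_{d_2}$ for $f,g$, I would restate the lemma as: (2) $P_1$ has only simple zeros on $\mathbb{P}^1$, and (1) $P_1$ and $P_2$ have no common zero on $\mathbb{P}^1$.

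Both of these are Zariski-open conditions on the coefficients of the pair $(f_{d_1},g_{d_2})$: condition (2) is the non-vanishing of the discriminant of the binary form $P_1$, and condition (1) is the non-vanishing of the resultant $\mathrm{Res}(P_1,P_2)$. Consequently the set of $F=(f,g)$ satisfying (1) and (2) is the complement of the union of two hypersurfaces in $\Omega_2(d_1,d_2)$, and the whole problem reduces to showing that each condition defines a \emph{non-empty} set, i.e. that the discriminant and the resultant are not identically zero as polynomials in the coefficients of the leading forms.

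For the non-emptiness I would argue exactly as in Lemma \ref{infty0}, via a submersion and a cone argument. For (2) consider
$$\Phi_1\colon H_{d_1}\times H_{d_2}\times(\C^2\setminus\{0\})\ni(u,v,p)\mapsto\big(\partial_x P_1(p),\partial_y P_1(p)\big)\in\C^2,$$
so that a point of $\Phi_1^{-1}(0)$ records a multiple zero of $P_1$; for (1) consider $\Phi\colon(u,v,p)\mapsto(P_1(p),P_2(p))$. If these maps are submersions along the fibre over $0$, then for generic $(u,v)$ the corresponding fibre is empty or zero-dimensional, and being a cone in $p$ it must be empty, which is precisely (2), resp. (1). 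Here Lemma \ref{infty0}(2), applied to $f$ and to $g$, is what I would use to guarantee that $\nabla f_{d_1}$ and $\nabla g_{d_2}$ are nowhere zero on $L_\infty$, the non-degeneracy needed to perturb. Since each condition depends only on the leading forms and is open and dense there, it is open and dense in $\Omega_2(d_1,d_2)$, and intersecting the two sets yields the required $V$.

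The main obstacle is verifying that $\Phi$ is a submersion for part (1): one must show that at a common zero $p_0\in L_\infty$ of $P_1$ and $P_2$ the pair $(P_1,P_2)$ can be moved independently by perturbing $(f_{d_1},g_{d_2})$. I expect to do this by perturbing $v=g_{d_2}$: varying the second-order jet of $v$ at $p_0$ changes $P_2$ through the quadratic expression $-(f_{d_1})_y^2\,\delta v_{xx}-(f_{d_1})_x^2\,\delta v_{yy}+2(f_{d_1})_x(f_{d_1})_y\,\delta v_{xy}$ at $p_0$, while $P_1$ is controlled by the first-order jet, so that $\nabla f_{d_1}(p_0)\neq 0$ forces surjectivity; the only delicate point is the bookkeeping with the Euler relations, which couple the jets of a homogeneous polynomial. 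Alternatively one can bypass the submersion entirely by exhibiting one explicit pair of leading forms: writing $t=y/x$ and $f_{d_1}=x^{d_1}\tilde u(t)$, $g_{d_2}=x^{d_2}\tilde v(t)$ one computes $P_1=x^{d_1+d_2-2}\big(d_1\tilde u\tilde v'-d_2\tilde u'\tilde v\big)$, so that choosing $\tilde u,\tilde v$ making this Wronskian-type combination have distinct roots, coprime to the analogous reduction of $P_2$, settles both (1) and (2) simultaneously.
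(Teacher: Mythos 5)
Your opening reduction to the leading forms $f_{d_1},g_{d_2}$, and the reformulation of (1) and (2) as non-vanishing of a resultant and a discriminant of binary forms, is exactly how the paper's proof begins, and casting the genericity as a parametric-transversality statement is in the spirit of Lemma \ref{infty0}. But the step you flag as the ``main obstacle'' is where the proof actually lives, and the mechanism you propose for it is wrong. Work at $p_0=(1:0)$ and write $f_{d_1}=\sum a_ix^{d_1-i}y^i$, $g_{d_2}=\sum b_ix^{d_2-i}y^i$. By the Euler relations, a perturbation $\delta g_{d_2}\in H_{d_2}$ that fixes the $1$-jet of $g_{d_2}$ at $p_0$ has $\delta b_0=\delta b_1=0$, hence automatically $\delta (g_{d_2})_{xx}(p_0)=\delta (g_{d_2})_{xy}(p_0)=0$: only $(g_{d_2})_{yy}$ can be moved. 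Therefore $\delta P_2(p_0)=-\big((f_{d_1})_x(p_0)\big)^2\,\delta (g_{d_2})_{yy}(p_0)$, and again by Euler $(f_{d_1})_x(p_0)=d_1f_{d_1}(p_0)$. So surjectivity by perturbing $g$ requires $f_{d_1}(p_0)\neq 0$, \emph{not} $\nabla f_{d_1}(p_0)\neq 0$; the nondegeneracy you import from Lemma \ref{infty0}(2) is the wrong one, since a simple zero of $f_{d_1}$ has nonvanishing gradient and is precisely where this perturbation produces nothing.

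Worse, the submersion claim is genuinely false, not merely unverified: on the locus $\{f_{d_1}(p_0)=g_{d_2}(p_0)=0\}$, i.e. $a_0=b_0=0$, one computes $P_1(p_0)=d_1a_0b_1-d_2a_1b_0$ and $P_2(p_0)=(d_1+d_2-2)a_1P_1(p_0)-d_1a_0\left[(d_1-d_2)a_1b_1+2d_1a_0b_2-2d_2a_2b_0\right]$, so both vanish identically there, and for $d_1=d_2$ one gets $dP_2=2(d_1-1)a_1\,dP_1$ at such points, so the differential of $\Phi$ has rank $1$ along a nonempty subset of $\Phi^{-1}(0)$. Parametric transversality therefore cannot be invoked as stated. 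What is missing is exactly the half of the paper's proof your proposal has no counterpart for: either one first restricts to the (open dense) set of parameters where $f_{d_1}$ and $g_{d_2}$ have no common zero on $L_\infty$, or, as the paper does, one splits the incidence variety $X_0$ into the part with $a_0b_0\neq 0$ — where $\det\pa{(J,J_{1,1})}{(b_1,b_2)}=-2(d_1a_0)^3\neq 0$ gives independence (this is your perturbation of $b_1,b_2$, but it is only valid where $a_0\neq 0$) — and the part inside $\{a_0b_0=0\}$, which is shown to have codimension at least $2$ directly from the equation $J(q)=d_1a_0b_1-d_2a_1b_0=0$. Without this degenerate-locus analysis the dimension count does not close. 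Two smaller points: the paper gets (2) for free from (1) via the inclusion $\overline{\{\pa{J}{x}=0\}}\cap\overline{\{\pa{J}{y}=0\}}\subset\overline{J_{1,1}(F)}$, which you could adopt instead of a second transversality argument (which would inherit the same gap); and your fallback of exhibiting one explicit pair $(\tilde u,\tilde v)$ is legitimate in principle once openness is known, but no pair is produced and the chart $t=y/x$ misses the point $(0:1)$, so as written it too is a plan rather than a proof.
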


\begin{proof}
Since the case $d_1=d_2=1$ is trivial we may assume that $d_1>1$ or $d_2>1$.
We consider the (generic) case when $\deg f=d_1$ and $\deg g=d_2$.
Hence $\overline{J(F)}\cap L_\infty$ and $\overline{J_{1,1}(F)}\cap L_\infty$ depend only on the homogeneous parts of $f$ and $g$ of degree
$d_1$ and $d_2$ respectively. Let $H_d$ denote the set of
homogeneous polynomials of degree $d$ in two variables. It is
sufficient to show that there is an open subset $V\subset
H_{d_1,d_2}:=H_{d_1}\times H_{d_2}$ such that $\overline{J(F)}\cap \overline{J_{1,1}(F)}\cap L_\infty=\emptyset$ for all $F=(f,g)\in V$.

Consider the set $X=\left\{(p,F)\in\mathbb{P}^1\times H_{d_1,d_2}\
:\ J(F)(p)=J_{1,1}(F)(p)=0\right\}$. Note that $X$ is a
closed subset of $\mathbb{P}^1\times H_{d_1,d_2}$, and if
$\overline{J(F)}\cap \overline{J_{1,1}(F)}\cap L_\infty\neq\emptyset$ then
$F$ belongs to the image of the projection of $X$ on
$H_{d_1,d_2}$. So to prove (1) it is sufficient to show that $X$
has dimension strictly smaller than the dimension of
$H_{d_1,d_2}$.

Let $q=(1:0)\in \mathbb{P}^1$, $Y:=\{q\}\times
H_{d_1,d_2}$ and $X_0=X\cap Y$. Note that all fibers of the projection
$X\rightarrow\mathbb{P}^1$ are isomorphic to $X_0$. Thus
$\dim(X)=\dim(X_0)+\dim(\mathbb{P}^1)$ and to prove (1) it is sufficient to
show that $X_0$ has codimension at least $2$ in $Y$.

Let $(q,F)\in Y$ and let $a_i$ and $b_i$ be the parameters in $H_{d_1,d_2}$ giving
respectively the coefficients of $f$ at $x^{d_1-i}y^i$ and
of $g$ at $x^{d_2-i}y^i$. For $0\leq i+j\leq d_1$, we have
$\frac{\partial^{i+j}f}{\partial
x^iy^j}(q)=\frac{(d_1-j)!j!}{(d_1-i-j)!}a_j(F)$ and similarly
for $g$ and $b_j$.

To conclude the proof of (1) we will show that the codimension of $\{a_0b_0=0\}\cap X_0$ in $Y$ is at least $2$ and $\nabla J$ and $\nabla J_{1,1}$ are linearly independent outside $\{a_0b_0=0\}\cap X_0$ and thus the variety $X_0$ has
codimension $2$ in $Y$.

Let us calculate $J(p)$. We have $J(p)=(f_xg_y-f_yg_x)(q,F)=(d_1a_0b_1-d_2a_1b_0)(F)$.
Thus $\{a_0=0\}\cap X_0\subset \{a_0=a_1b_0=0\}\cap Y$ has codimension at least $2$ and we may assume in further calculations that $a_0(F)\neq 0$ and similarly $b_0(F)\neq 0$.

Let us assume that $d_2>1$. We have $\pa{J}{b_1}(p)=\pa{(d_1a_0b_1-d_2a_1b_0)}{b_1}(F)=d_1a_0(F)$ and $\frac{\partial J}{\partial b_2}(p)=0$.
Now let us calculate $\pa{J_{1,1}}{b_2}(p)$. The
coefficient $b_2$ can only be obtained from $\frac{\partial^2
g}{\partial y^2}$, which is present in $J_{1,1}$ in the
summand $-2\frac{\partial^2 g}{\partial
y^2}(d_1\pa{f}{x})^2$. Thus
$\pa{J_{1,1}}{b_2}(p)=\pa{(-2d_1^2b_2a_0^2)}{b_2}(F)=-2(d_1a_0(F))^2$.
So $\det\pa{(J,J_{1,1})}{(b_1,b_2)}(p)=-2(d_1a_0(F))^3\neq 0$.

Similarly, if $d_2=1$ and $d_1>1$ then $\det\pa{(J,J_{1,1})}{(a_1,a_2)}(p)=-2(d_1a_0(F))(d_2b_0(F))^2\neq 0$.

To prove (2) note that $\overline{\big\{\pa{J}{x}(F)=0\big\}}\cap\overline{\big\{\pa{J}{y}(F)=0\big\}}\subset \overline{J_{1,1}(F)}$, hence (1) implies (2).
\end{proof}

\begin{lem}\label{lemtrans1}
There is a non-empty open subset $V_1\subset \Omega_2(d_1,d_2)$
such that for all $(f,g)\in V_1$ and every $a\in \C^2$: if
$\frac{\partial{f}}{\partial{x}}(a)=0$ and
$\frac{\partial{f}}{\partial{y}}(a)=0$, then
$\frac{\partial{g}}{\partial{x}}(a)\not=0$ and
$\frac{\partial{g}}{\partial{y}}(a)\not=0$.
\end{lem}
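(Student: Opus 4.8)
The plan is to recognize this as a first-order jet-transversality statement and to settle it by a dimension count. The conditions in the lemma involve only the first partial derivatives of $f$ and $g$, so it is natural to work in $J^1(\C^2,\C^2)$ with its coordinates $(x,y,f,g,f_x,f_y,g_x,g_y)$. Inside this $8$-dimensional space I would introduce
$$S_1=\{f_x=f_y=g_x=0\}\quad\text{and}\quad S_2=\{f_x=f_y=g_y=0\}.$$
Each of $f_x,f_y,g_x,g_y$ is one of the linear coordinate functions on $J^1(\C^2,\C^2)$, so both $S_1$ and $S_2$ are linear subspaces, hence closed smooth algebraic submanifolds, each of codimension exactly $3$.

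Next I would apply Theorem \ref{thgeneric} to the submanifolds $S_1,S_2$ with $X=\C^2$, $n=m=2$, $r=1$ and $q_1=1$. The degree hypothesis $d_i\ge q_1+1-1=1$ is automatically satisfied, so the theorem yields a Zariski open dense subset $V_1\subset\Omega_2(d_1,d_2)$ such that $j^1(F)\pitchfork S_1$ and $j^1(F)\pitchfork S_2$ for every $F=(f,g)\in V_1$. Note that one cannot invoke Theorem \ref{trans} here, since $S_1$ and $S_2$ are not modular; this is exactly the setting for which Theorem \ref{thgeneric} was designed, and the needed degree bound is trivially met.

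Finally I would run the dimension count. The map $j^1(F)\colon\C^2\to J^1(\C^2,\C^2)$ has a $2$-dimensional source, while $\codim S_i=3$. For a map transversal to a submanifold whose codimension exceeds the source dimension the preimage is forced to be empty, since at a hypothetical preimage point the sum of the (at most $2$-dimensional) image of the differential and the $5$-dimensional tangent space to $S_i$ cannot span the $8$-dimensional ambient tangent space. Hence $(j^1(F))^{-1}(S_1)=\emptyset$ and $(j^1(F))^{-1}(S_2)=\emptyset$. Unwinding the definitions, emptiness of $(j^1(F))^{-1}(S_1)$ says there is no $a\in\C^2$ with $f_x(a)=f_y(a)=g_x(a)=0$, i.e. whenever $f_x(a)=f_y(a)=0$ one has $g_x(a)\neq 0$; emptiness of $(j^1(F))^{-1}(S_2)$ gives $g_y(a)\neq 0$ at such points. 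This is precisely the assertion.

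The argument is essentially a codimension count, so there is no serious obstacle. The only steps requiring (routine) care are the identification of $S_1,S_2$ as smooth submanifolds of codimension $3$ and the check of the degree condition in Theorem \ref{thgeneric}, both of which are immediate because the defining equations are coordinate functions and $q_1=1$.
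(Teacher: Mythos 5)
Your proof is correct and follows essentially the same route as the paper: the paper likewise defines the two codimension-three subsets $R_1=\{f_x=f_y=g_x=0\}$ and $R_2=\{f_x=f_y=g_y=0\}$ in $J^1(\C^2,\C^2)$, invokes Theorem \ref{thgeneric} to get generic transversality of $j^1(F)$ to them, and concludes that the image of $j^1(F)$ must be disjoint from both since their codimension exceeds the source dimension. Your additional checks (the degree hypothesis and the remark that modularity fails, so Theorem \ref{trans} is unavailable) are correct but routine.
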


\begin{proof}
Let us consider two subsets in $J^1(\C^2,\C^2)$: $R_1:=\{
(x,y,f,g,f_x,f_y,g_x,g_y): f_x=0,f_y=0, g_x=0\}$ and $R_2:=\{
(x,y,f,g,f_x,f_y,g_x,g_y): f_x=0,f_y=0, g_y=0\}$. By Theorem
\ref{thgeneric} there is a non-empty open subset $V_1\subset
\Omega_2(d_1,d_2)$ such that for every $F\in V_1$ the mapping
$j^1(F)$ is transversal to $R_1$ and $R_2$. Since these subsets
have codimension three, we see that the image of $j^1(F)$ is
disjoint with $R_1$ and $R_2$.
\end{proof}

\begin{lem}\label{lemtrans2}
There is a non-empty open subset $V_2\subset \Omega_2(d_1,d_2)$
such that for all $(f,g)\in V_2$ we have
$\big\{\frac{\partial{f}}{\partial{x}}=0\big\} \cap
\big\{\frac{\partial{f}}{\partial{y}}=0\big\}\cap J_{1,2}(f,g)=\emptyset$.
\end{lem}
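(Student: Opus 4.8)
The plan is to realize this as a transversality-plus-dimension-count statement in the jet space $J^2(\C^2,\C^2)$, exactly in the spirit of the preceding lemmas. First I would compute what the equation $L_3$ becomes along the locus $\{f_x=f_y=0\}$. Substituting $f_x=f_y=0$ into
$$L_3=(f_{xx}g_y+f_xg_{xy}-f_{xy}g_x-f_yg_{xx})g_y-(f_{xy}g_y+f_xg_{yy}-f_{yy}g_x-f_yg_{xy})g_x$$
collapses it to the binary quadratic form $Q:=f_{xx}g_y^2-2f_{xy}g_xg_y+f_{yy}g_x^2$. Thus the set whose emptiness we want is $(j^2(F))^{-1}(S)$, where $S\subset J^2(\C^2,\C^2)$ is the locus $\{f_x=f_y=0,\ Q=0\}$.

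Next I would check that $S$ is smooth of codimension $3$ away from the degenerate locus $\{g_x=g_y=0\}$. The equations $f_x=0$ and $f_y=0$ are independent linear conditions cutting out a smooth codimension-$2$ subvariety $P$, and on $P$ the coordinates $f_{xx},f_{xy},f_{yy},g_x,g_y$ are free. Since
$$\pa{Q}{f_{xx}}=g_y^2,\quad \pa{Q}{f_{yy}}=g_x^2,\quad \pa{Q}{f_{xy}}=-2g_xg_y,$$
the differential of $Q$ along $P$ is nonzero precisely when $(g_x,g_y)\neq(0,0)$; moreover these partials involve coordinates disjoint from $f_x,f_y$, so the three differentials $df_x,df_y,dQ$ are automatically independent there. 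Hence, on the open set $U:=\{(g_x,g_y)\neq(0,0)\}$, the variety $S\cap U$ is a smooth submanifold of codimension $3$.

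I would then apply Theorem \ref{thgeneric} (single jet, $q_1=2$) to the smooth submanifold $S\cap U$: for a generic $F$ the jet $j^2(F)$ is transversal to $S\cap U$. Since $\dim\C^2=2<3=\codim(S\cap U)$, transversality forces $j^2(F)(\C^2)\cap(S\cap U)=\emptyset$. Finally I would dispose of the degenerate locus by invoking Lemma \ref{lemtrans1}: for $F$ in the open set $V_1$ produced there, every point with $f_x=f_y=0$ has $g_x\neq0$ and $g_y\neq0$, so the image of $\{f_x=f_y=0\}$ under $j^2(F)$ lands inside $U$. Intersecting $V_1$ with the open set produced by the transversality step yields the desired $V_2$: at any $a$ with $f_x(a)=f_y(a)=0$ the jet $j^2(F)(a)$ lies in $U$ but not in $S$, so $L_3(j^2(F)(a))=Q(a)\neq0$, i.e. $a\notin J_{1,2}(f,g)$.

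The main obstacle is precisely the degenerate locus $\{g_x=g_y=0\}$, where $Q$ vanishes identically and $S$ fails to be a hypersurface, so the naive codimension count breaks. The clean way around it is the preceding Lemma \ref{lemtrans1}, which guarantees that for generic $F$ the set $\{f_x=f_y=0\}$ never meets $\{g_x=g_y=0\}$, leaving only the smooth part $S\cap U$ relevant. Alternatively, one could avoid appealing to Lemma \ref{lemtrans1} by stratifying: the bad locus $\{f_x=f_y=g_x=g_y=0\}$ is itself cut out by four independent linear conditions, hence smooth of codimension $4>2$, so transversality to it also forces emptiness.
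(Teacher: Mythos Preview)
Your proof is correct and follows essentially the same approach as the paper: define a codimension-$3$ locus in $J^2(\C^2,\C^2)$ by adjoining the equation $L_3=0$ to $f_x=f_y=0$, restrict to an open set where this locus is smooth, and apply the transversality theorem plus a dimension count. The only cosmetic differences are that the paper works directly with $L_3$ and the open condition $g_x\neq0,\ g_y\neq0$, whereas you first simplify $L_3$ to $Q=f_{xx}g_y^2-2f_{xy}g_xg_y+f_{yy}g_x^2$ on $\{f_x=f_y=0\}$ and use the slightly larger open set $\{(g_x,g_y)\neq(0,0)\}$; and you make the appeal to Lemma~\ref{lemtrans1} for the degenerate locus explicit, while the paper folds this into its definition of $S$.
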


\begin{proof}
Let us consider the (non-closed) subvariety $S\subset J^2(2)$
given by equations: $f_x=0$, $f_y=0$,
$(f_{xx}g_y+f_xg_{xy}-f_{xy}g_x-f_yg_{xx})g_y-(f_{xy}g_y+f_xg_{yy}-f_{yy}g_x-f_yg_{xy})g_x=0$,
$g_x\not=0$, $g_y\not=0$. It is easy to check that $S$ is a smooth
complete intersection and it has codimension three. The set of
generic mappings $F$ which are transversal to $S$ contains a
Zariski open dense subset $V_2\subset \Omega_2(d_1,d_2)$. By
construction for all $(f,g)\in V_2$ we have
$\big\{\frac{\partial{f}}{\partial{x}}=0\big\} \cap
\big\{\frac{\partial{f}}{\partial{y}}=0\big\}\cap J_{1,2}(f,g)=\emptyset$.
\end{proof}

\begin{lem}\label{lemtrans3}
There is a non-empty open subset $V_3\subset \Omega_2(d_1,d_2)$
such that for all $(f,g)\in V_3$ the curve $J(f,g)$ is transversal
to the curve $J_{1,1}(f,g)$.
\end{lem}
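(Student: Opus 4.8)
The plan is to verify transversality of the plane curves $J=\{L_1\circ j^2(F)=0\}$ and $J_{1,1}=\{L_2\circ j^2(F)=0\}$ at each of their common points, and the key observation is that these common points come in two kinds. At a point $a\in J\cap J_{1,1}$ the two curves meet transversally exactly when the gradients $\nabla(L_1\circ j^2(F))(a)$ and $\nabla(L_2\circ j^2(F))(a)$ are linearly independent, equivalently when $(L_1,L_2)\circ j^2(F)\colon\C^2\to\C^2$ is a submersion at $a$. Since every term of $L_1=f_xg_y-f_yg_x$ and of $L_2$ carries a factor $f_x$ or $f_y$, both $L_1$ and $L_2$ vanish identically on the set $Z=\{f_x=f_y=0\}$, which is finite by Lemma~\ref{infty0}. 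Thus $J\cap J_{1,1}$ is contained in the union of $Z$ with the locus $\{\nabla f\neq0\}$, and I treat the two parts separately.

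On the open set $\{\nabla f\neq0\}$ the Thom--Boardman manifold $\Sigma^{1,1}$ is, as recorded above, the smooth codimension-two complete intersection $\{L_1=L_2=0\}$; in particular the common zeros of $J$ and $J_{1,1}$ lying there are exactly the points with $j^2(F)(a)\in\Sigma^{1,1}$. By Theorem~\ref{trans} there is a Zariski open dense set of $F$ for which $j^2(F)\pitchfork\Sigma^{1,1}$. For such $F$ and such $a$, transversality of $j^2(F)$ to $\Sigma^{1,1}$ means that the induced map $T_a\C^2\to TJ^2/T\Sigma^{1,1}\cong\C^2$, namely $v\mapsto\bigl(d(L_1\circ j^2(F))(v),\,d(L_2\circ j^2(F))(v)\bigr)$, is onto; being a surjection $\C^2\to\C^2$ it is an isomorphism, so the two gradients are independent and $J\pitchfork J_{1,1}$ at $a$.

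There remains the finite set $Z$, and this is where the preparatory lemmas enter. A direct differentiation shows that at a point of $Z$ one has $\nabla(L_1\circ j^2(F))=(f_{xx}g_y-f_{xy}g_x,\ f_{xy}g_y-f_{yy}g_x)$, while every third-order term in $\nabla(L_2\circ j^2(F))$ acquires a factor $f_x$ or $f_y$ and drops out, leaving $\nabla(L_2\circ j^2(F))=(f_{xx}f_{yy}-f_{xy}^2)\,(g_x,g_y)$. Hence the transversality determinant factors as
$$\det\begin{pmatrix}\nabla(L_1\circ j^2(F))\\ \nabla(L_2\circ j^2(F))\end{pmatrix}=(f_{xx}f_{yy}-f_{xy}^2)\,\bigl(L_3\circ j^2(F)\bigr),$$
because $L_3\circ j^2(F)=f_{xx}g_y^2-2f_{xy}g_xg_y+f_{yy}g_x^2$ on $Z$. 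Now Lemma~\ref{infty0}(1) gives $\{f_x=0\}\pitchfork\{f_y=0\}$, that is, the Hessian factor $f_{xx}f_{yy}-f_{xy}^2$ is nonzero on $Z$; Lemma~\ref{lemtrans2} gives $Z\cap J_{1,2}=\emptyset$, that is, $L_3\circ j^2(F)\neq0$ on $Z$ (which in particular forces $(g_x,g_y)\neq0$ there, consistent with Lemma~\ref{lemtrans1}). Therefore the determinant is nonzero and $J\pitchfork J_{1,1}$ at every point of $Z$ as well.

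Finally I would take $V_3$ to be the intersection of the Zariski open dense set from Theorem~\ref{trans} with the open sets supplied by Lemmas~\ref{infty0}, \ref{lemtrans1} and \ref{lemtrans2}; it is nonempty and open, and for every $F\in V_3$ the two curves are transversal at all their common points. The main obstacle is precisely the locus $Z=\{f_x=f_y=0\}$: there the curve $J_{1,1}$ passes through points that are folds rather than genuine cusps, so the generic transversality of $j^2(F)$ to $\Sigma^{1,1}$ does not apply and one must compute the two gradients by hand. The feature that makes the argument succeed is that the resulting determinant factors exactly into the Hessian of $f$ and the value of $L_3$, the two quantities controlled by Lemmas~\ref{infty0} and \ref{lemtrans2}, which is why those lemmas were established beforehand.
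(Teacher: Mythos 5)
Your proof is correct and has the same skeleton as the paper's: the identical two-case split, with points where $\nabla f\neq 0$ handled by transversality of $j^2(F)$ to $\Sigma^{1,1}$ (Theorem \ref{trans} plus the local complete-intersection description of $\Sigma^{1,1}$ by $L_1,L_2$), and the points of $Z=\{f_x=f_y=0\}$ handled via the preparatory lemmas. The only divergence is how the second case is finished. The paper works in the local ring ${\mathcal O}_a$: using Lemma \ref{lemtrans1} to make $g_x,g_y$ units, it rewrites $(J,J_{1,1})$ modulo $J$ as $(J,\,f_xJ_{1,2},\,f_yJ_{1,2})=(f_x,f_y)=\mathfrak{m}_a$, invoking Lemma \ref{lemtrans2} for $J_{1,2}(a)\neq 0$ and Lemma \ref{infty0}(1) for $(f_x,f_y)=\mathfrak{m}_a$. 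You instead evaluate the gradients at a point of $Z$, where they collapse to $\nabla J=(f_{xx}g_y-f_{xy}g_x,\ f_{xy}g_y-f_{yy}g_x)$ and $\nabla J_{1,1}=(f_{xx}f_{yy}-f_{xy}^2)(g_x,g_y)$ (both formulas check out), and factor the transversality determinant as $(f_{xx}f_{yy}-f_{xy}^2)\,J_{1,2}$. The two finishes are equivalent --- for two germs vanishing at $a$, generating $\mathfrak{m}_a$ is the same as having independent gradients --- but yours is slightly more economical: Lemma \ref{lemtrans1} is never needed as a separate input, since $J_{1,2}\neq 0$ on $Z$ already forces $(g_x,g_y)\neq 0$ there. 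Both arguments ultimately rest on exactly the same two nonvanishing facts, the Hessian determinant of $f$ (Lemma \ref{infty0}(1)) and $J_{1,2}$ (Lemma \ref{lemtrans2}).
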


\begin{proof}
There is a Zariski open subset $V_3$ which contains only generic
mappings which satisfy hypotheses of all lemmas above. We can also
assume that the curves $\big\{\frac{\partial{f}}{\partial{x}}=0\big\}$ and
$\big\{\frac{\partial{f}}{\partial{y}}=0\big\}$ intersect transversally.
We have to show that the curves $J(f,g)$ and
 $J_{1,1}(f,g)$
intersect transversally at every point $a\in J(f,g)\cap
 J_{1,1}(f,g)$. If $\nabla_a
f\not=0$ then it follows from transversality of the mapping $F$
to the set $S_{1,1}$. Hence we can assume
$\frac{\partial{f}}{\partial{x}}(a)=0$ and
$\frac{\partial{f}}{\partial{y}}(a)=0$. By Lemma \ref{lemtrans1}
we have  $\frac{\partial{g}}{\partial{x}}(a)\not=0$ and
$\frac{\partial{g}}{\partial{y}}(a)\not=0$. Let us denote:
$\frac{\partial f}{\partial x}(x,y)=f_x$, $\frac{\partial
f}{\partial y}(x,y)=f_y$, etc.  It is enough to prove that in
the ring ${\mathcal O}_a^2$ we have the
 equality $I=(f_xg_y-f_yg_x, (f_{xx}g_y+f_xg_{xy}-f_{xy}g_x-f_yg_{xx})f_y-(f_{xy}g_y+f_xg_{yy}-f_{yy}g_x-f_yg_{xy})f_x)={\textswab m}_a$,
 where ${\textswab m}_a$ denotes the maximal ideal of  ${\mathcal O}_a^2$.
Put $L=f_xg_y-f_yg_x$. Hence $I=(L,L_xf_y-L_yf_x)$. Since
$g_x(a)\not=0$, $g_y(a)\not=0$, we have
$$I=(L,g_x[L_xf_y-L_yf_x],g_y[L_xf_y-L_yf_x])=(L,L_xg_xf_y-L_yg_xf_x,L_xg_yf_y-L_yg_yf_x)=$$
$$=(L,L_xg_yf_x-L_yg_xf_x,L_xg_yf_y-L_yg_xf_y)=(L,f_x[L_xg_y-L_yg_x],f_y[L_xg_y-L_yg_x]).$$
By Lemma \ref{lemtrans2} we have $[L_xg_y-L_yg_x](a)\not=0$, hence $I=(f_x,f_y)={\textswab m}_a$.
\end{proof}

Now we are in a position to prove:

\begin{theo}\label{thmcusps}
There is a Zariski open, dense subset $U\subset \Omega_2(d_1,d_2)$
such that for every mapping $F\in U$ the mapping $F$ has only
folds and cusps as singularities and the number of cusps is
equal to
$$d_1^2+d_2^2+3d_1d_2-6d_1-6d_2+7.$$
Moreover, if $d_1>1$ or $d_2>1$ then the set $C(F)$ of critical
points of $F$ is a smooth connected curve, which is topologically
equivalent to a sphere with $g=\frac{(d_1+d_2-3)(d_1+d_2-4)}{2}$
handles and $d_1+d_2-2$ points removed.
\end{theo}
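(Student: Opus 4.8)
The plan is to compute the two invariants separately, using the critical set $C(F) = J(F)$ and its cusps $J(F) \cap J_{1,1}(F)$, and then to read off the genus via the adjunction formula on $\mathbb{P}^2$. Throughout I work with a generic $F = (f,g)$ belonging to the intersection $V_1 \cap V_2 \cap V_3$ from Lemmas \ref{lemtrans1}--\ref{lemtrans3}, intersected also with the sets $V$ from Lemmas \ref{infty0} and \ref{infty}, so that all the transversality and behaviour-at-infinity conditions hold simultaneously.

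First I would compute the number of cusps. By Theorem \ref{cor1} a generic $F$ has only folds and cusps, and a point $a \in C(F)$ is a cusp exactly when $a \in J(F) \cap J_{1,1}(F)$; by Lemma \ref{lemtrans3} these two curves meet transversally, so the number of cusps is the intersection number $\overline{J(F)} \cdot \overline{J_{1,1}(F)}$ in $\mathbb{P}^2$ \emph{minus} any intersection points lying on the line at infinity $L_\infty$. Lemma \ref{infty} (1) guarantees that $\overline{J(F)}$ and $\overline{J_{1,1}(F)}$ do not meet on $L_\infty$, so by B\'ezout's theorem the count is exactly $\deg J \cdot \deg J_{1,1}$. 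The degree of $J = f_x g_y - f_y g_x$ is $(d_1 - 1) + (d_2 - 1) = d_1 + d_2 - 2$; inspecting the defining equation $L_2$, each surviving monomial (such as $g_{xx} f_y^2$ or $g_{yy} f_x^2$) has degree $(d_2 - 2) + 2(d_1 - 1) = 2d_1 + d_2 - 4$, and by symmetry of the other terms the total degree of $J_{1,1}$ is $2d_1 + d_2 - 4$. I would double-check that $d_1$ and $d_2$ enter symmetrically after accounting for the genuine degree of the homogeneous equation; the product $(d_1 + d_2 - 2)(2d_1 + d_2 - 4)$ is not symmetric, so the correct reading of $\deg J_{1,1}$ must in fact be arranged so that the product expands to the claimed $d_1^2 + d_2^2 + 3d_1 d_2 - 6d_1 - 6d_2 + 7$; verifying that the homogenization degrees yield precisely this symmetric polynomial is the first place where care is needed.

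Next I would determine the topology of $C(F)$. The curve $J(F)$ is smooth: indeed $C(F) = \Sigma^1(F)$ is smooth because $j^1(F) \pitchfork \Sigma^1$, and Lemma \ref{infty} (2) ensures $\overline{J(F)}$ is smooth and transversal to $L_\infty$ at infinity as well. Thus $\overline{J(F)} \subset \mathbb{P}^2$ is a smooth plane curve of degree $\delta := d_1 + d_2 - 2$, and by the genus formula its genus is $g = \binom{\delta - 1}{2} = \frac{(\delta-1)(\delta-2)}{2} = \frac{(d_1+d_2-3)(d_1+d_2-4)}{2}$, matching the statement. The affine curve $C(F)$ is obtained by removing the points of $\overline{J(F)} \cap L_\infty$; by Lemma \ref{infty} (2) these are $\delta = d_1 + d_2 - 2$ transversal (hence distinct) intersection points, so $C(F)$ is a genus-$g$ surface with $d_1 + d_2 - 2$ punctures, exactly as claimed. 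Connectedness follows because a smooth plane curve is irreducible and thus connected.

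The main obstacle I anticipate is the degree bookkeeping for $J_{1,1}$. The raw expression $L_2$ mixes second derivatives of $f$ and $g$ with first derivatives, and one must homogenize consistently and identify the \emph{reduced} defining equation of $\overline{J_{1,1}(F)}$, discarding any spurious common factors (for instance factors supported on $L_\infty$ arising from the homogenization) before applying B\'ezout; the intersection-at-infinity analysis in Lemma \ref{infty} is precisely what licenses this. Once the correct degree of $J_{1,1}$ is pinned down so that $\deg J \cdot \deg J_{1,1}$ expands to the stated symmetric cusp count, the remaining steps are formal applications of B\'ezout and the adjunction/genus formula, and the genericity lemmas supply every transversality and generic-position hypothesis these classical theorems require.
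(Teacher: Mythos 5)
There is a genuine gap in your cusp count. You assert that a point $a\in C(F)$ is a cusp exactly when $a\in J(F)\cap J_{1,1}(F)$, and you then take the full B\'ezout number $\deg J\cdot\deg J_{1,1}$ as the number of cusps. This identification is false: the defining equation $L_2$ of $J_{1,1}$ has one group of terms multiplied by $f_y$ and the other multiplied by $f_x$, so $L_2$ vanishes identically on the locus $\{f_x=f_y=0\}$, and of course $J=f_xg_y-f_yg_x$ vanishes there too. Hence every point with $\nabla f=0$ automatically lies in $J(F)\cap J_{1,1}(F)$, and by Lemma \ref{lemtrans2} such points are generically \emph{not} cusps of $F$ (at those points membership in $\Sigma^{1,1}$ would have to be tested with $J_{1,2}$, i.e.\ with $L_3$, and generically it fails). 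By Lemma \ref{infty0} the curves $\{f_x=0\}$ and $\{f_y=0\}$ meet transversally, with no common points at infinity, in exactly $(d_1-1)^2$ points, and by Lemma \ref{lemtrans3} each of these is a transversal (hence multiplicity-one) intersection point of $J$ and $J_{1,1}$. So the correct count is
\begin{equation*}
(d_1+d_2-2)(2d_1+d_2-4)-(d_1-1)^2=d_1^2+d_2^2+3d_1d_2-6d_1-6d_2+7 .
\end{equation*}

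This also resolves the asymmetry that worried you, but in the opposite way from what you proposed: the degree of $J_{1,1}$ really is $2d_1+d_2-4$ (there is no hidden common factor to discard, and no re-reading of the homogenization fixes this), so the B\'ezout product is genuinely asymmetric in $d_1,d_2$; symmetry is restored only after subtracting the $(d_1-1)^2$ spurious intersection points, which depend on the asymmetric choice of using $f$ (rather than $g$) to write $L_2$. Indeed the cusp count is in general not divisible by $\deg J=d_1+d_2-2$, so it cannot equal any product $\deg J\cdot\deg J_{1,1}$, and the "arrangement" you hoped for does not exist. Your second part --- smoothness of $\overline{C(F)}$, transversality to $L_\infty$ via Lemma \ref{infty}, the genus formula for a smooth plane curve of degree $d_1+d_2-2$, the $d_1+d_2-2$ punctures, and connectedness from irreducibility of a smooth plane curve --- is correct and coincides with the paper's argument.
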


\begin{proof} Note that by Theorem \ref{cor1} a generic $F$ has only folds and cusps as singularities.
 Note that every point $a$
of the intersection of curves $J(f,g)$ and $J_{1,1}(f,g)$ with
$\nabla_a f\not=0$ is a cusp. Moreover for a generic
mapping $F$ points with $\nabla_a f=0$ are not cusps (Lemma
\ref{lemtrans2}). By Bezout Theorem we have that in $J(f,g)\cap J_{1,1}(f,g)$ there are exactly
$(d_1-1)^2$ points with $\nabla f=0$ and
that the number of cusps of a generic mapping is equal to
$$(d_1+d_2-2)(2d_1+d_2-4)-(d_1-1)^2=d_1^2+d_2^2+3d_1d_2-6d_1-6d_2+7.$$

Finally by Lemma \ref{infty} we have that $C(F)=S_1(F)$ is a
smooth affine curve which is transversal to the line at infinity.
This means that $\overline{C(F)}$ is also smooth at infinity,
hence it is a smooth projective curve of degree $d=d_1+d_2-2$.
Thus by the Riemmann-Roch Theorem the curve $\overline{C(F)}$
has genus $g=\frac{(d-1)(d-2)}{2}$. This means in particular that
$\overline{C(F)}$ is homeomorphic to a sphere with
$g=\frac{(d-1)(d-2)}{2}$ handles. Moreover, by the Bezout Theorem
it has precisely $d$ points at infinity.
\end{proof}

\begin{re}
{\rm The curve $C(F)$ has $d_1+d_2-2$ (smooth) points at infinity and at each of these points it is transversal to the line at infinity. }
\end{re}

\section{The discriminant}\label{secDF}
Here we analyze the discriminant of a generic mapping from $\Omega(d_1,d_2)$. Let us recall that the discriminant of the mapping
$F\colon \C^2\to\C^2$ is the curve $\Delta(F):=F(C(F))$, where $C(F)$ is the critical curve of $F$. From Theorem \ref{trans} we have:

\begin{lem}\label{bir}
There is a non-empty open subset $U\subset \Omega_2(d_1,d_2)$
such that for every mapping $F\in U$:
\begin{enumerate}
\item $F_{|C(F)}$ is injective outside a finite set,
\item if $p\in\Delta(F)$ then $|F^{-1}(p)\cap C(F)|\leq 2$,
\item if $|F^{-1}(p)\cap C(F)|= 2$ then the curve $\Delta(F)$ has a normal
crossing at $p$.
\end{enumerate}
\end{lem}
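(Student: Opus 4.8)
The plan is to derive all three assertions from the multi-jet transversality theorem (Theorem \ref{trans}). For each forbidden configuration I exhibit a modular submanifold $W$ of a multi-jet space ${}_sJ^q(\C^2,\C^2)$ and compare $\codim W$ with the dimension $2s$ of the source $(\C^2)^s\setminus\Diag$. Writing $\pi\colon J^q(\C^2,\C^2)\to\C^2$ for the projection of a jet onto its target value, I note that the defining conditions below --- being critical ($\Sigma^1$), being a cusp ($\Sigma^{1,1}$), image-coincidence $\pi(j_i)=\pi(j_k)$, and tangency of discriminant branches --- are all invariant under independent source-biholomorphisms together with a common target-biholomorphism; after restricting to the relevant smooth (e.g.\ fold-type) stratum, each locus is therefore a modular submanifold to which Theorem \ref{trans} applies.

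For (2), I take in ${}_3J^1(\C^2,\C^2)$ the locus $W$ of triples with each factor in $\Sigma^1$ and $\pi(j_1)=\pi(j_2)=\pi(j_3)$; the three criticality conditions and the two target-coincidences give $\codim W=3+4=7$. Since $\dim((\C^2)^3\setminus\Diag)=6<7$, transversality forces ${}_3j^1(F)$ to miss $W$ for $F$ in a Zariski open set, i.e.\ no three distinct critical points share an image, which is exactly $|F^{-1}(p)\cap C(F)|\le 2$. For (1) the same recipe in ${}_2J^1(\C^2,\C^2)$ with pairs of critical jets of equal image gives $\codim W=2+2=4=\dim((\C^2)^2\setminus\Diag)$; transversality makes the preimage $0$-dimensional, hence a finite algebraic set of pairs $(x_1,x_2)$ of distinct critical points with $F(x_1)=F(x_2)$, so $F|_{C(F)}$ is injective outside a finite set.

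For (3), fix $p$ with critical preimages $x_1,x_2$ and rule out two degeneracies in ${}_2J^2(\C^2,\C^2)$. If one preimage were a cusp, the corresponding locus ($x_1\in\Sigma^{1,1}$, $x_2\in\Sigma^1$, $\pi(j_1)=\pi(j_2)$) has codimension $2+1+2=5>4$ and is generically avoided; thus both $x_i$ are folds, and by Theorem \ref{disc} each contributes a smooth branch of $\Delta(F)$ through $p$ with well-defined tangent line $d_{x_i}F\,(T_{x_i}C(F))$. If these two lines coincided, we would land in the locus cut out by the two criticality and two image-coincidence conditions together with the single equation expressing equality of the two tangent directions; this has codimension $1+1+2+1=5>4$ and is likewise avoided. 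Hence both branches are smooth and transversal, so $\Delta(F)$ has a normal crossing at $p$. The desired $U$ is the finite intersection of the Zariski open sets obtained in the three steps.

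The main obstacle I expect is the analysis in (3): one must express the tangent direction to each discriminant branch explicitly through the $2$-jet --- it is the image under $d_{x_i}F$ of the tangent to $C(F)=\{L_1=0\}$, hence built from $\nabla L_1$ and the matrix of first partials --- and then check that the tangency equation defines a genuinely smooth, codimension-one, modular stratum on the fold locus, so that the total codimension is $5$ rather than $4$. The codimension bookkeeping for (1) and (2) is routine once the image-coincidence conditions are counted correctly, and the cusp exclusion is immediate from $\codim\Sigma^{1,1}=2$.
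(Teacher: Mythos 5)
Your treatment of (1) and (2) coincides with the paper's: the paper deduces both from Theorem \ref{trans} applied to the modular multi-jet strata $(\Sigma^1,\Sigma^1)$ and $(\Sigma^1,\Sigma^1,\Sigma^1)$, with exactly your codimension counts ($4=\dim$ of the source for pairs, $7>6$ for triples). The genuine gap is in (3). Your tangency locus --- pairs of fold jets with common target whose discriminant branches have equal tangent lines --- is not a Thom--Boardman stratum, so Theorem \ref{trans} does not apply to it unless you prove it is \emph{modular} in Mather's sense. Your inference that the locus is invariant under independent source biholomorphisms and a common target biholomorphism and is ``therefore'' modular is not valid: modularity is a strictly stronger, technical condition than $\mathcal{A}$-invariance, and the paper explicitly warns (right after Theorem \ref{trans}) that Mather's theorem fails for general smooth subvarieties of the jet space. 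The fallback tool, Theorem \ref{thgeneric}, does apply to arbitrary smooth algebraic submanifolds, but for two $2$-jets ($r=2$, $q_1=q_2=2$) it requires $d_1,d_2\ge 5$, so it cannot establish the lemma for all degrees. (Your exclusion of a cusp among the two preimages is fine, since $(\Sigma^{1,1},\Sigma^1)$ with target coincidence is one of the strata covered by Theorem \ref{trans}.)

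The missing idea is that no auxiliary stratum is needed: transversality to $(\Sigma^1,\Sigma^1)$ already yields (3), and this is how the paper argues. Since that stratum has codimension $4$, equal to $\dim\bigl((\C^2)^2\setminus\Diag\bigr)$, transversality at a point $(x_1,x_2)$ of the preimage forces the induced map to the normal space to be an isomorphism. Concretely, writing $J=f_xg_y-f_yg_x$, the map $(v_1,v_2)\mapsto\bigl(d_{x_1}J(v_1),\,d_{x_2}J(v_2),\,d_{x_1}F(v_1)-d_{x_2}F(v_2)\bigr)$ must be surjective; given that $C(F)$ is smooth at each $x_i$, this is equivalent to $d_{x_1}F(T_{x_1}C(F))+d_{x_2}F(T_{x_2}C(F))=\C^2$. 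This single condition forces each summand to be a line (so each $x_i$ is a fold and each branch of $\Delta(F)$ is a smooth curve germ) and forces the two lines to be distinct, i.e. $\Delta(F)$ has a normal crossing at $p$. In short, you used transversality to the codimension-$4$ stratum only as a dimension count, whereas at the finitely many points of its preimage it carries exactly the geometric information you tried to encode in your extra, unjustified stratum.
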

\begin{proof}
By Theorem \ref{trans} can find a $U$ such that we have the required transversality to Thom-Boardman strata. (1) and (3) follow from transversality to $(\Sigma^1,\Sigma^1)$ and (2) follows from transversality to $(\Sigma^1,\Sigma^1,\Sigma^1)$.
\end{proof}

Hence for a generic $F$ the only singularities of $\Delta(F)$ are cusps and nodes. We showed in Theorem \ref{thmcusps} that there are exactly $c(F)=d_1^2+d_2^2+3d_1d_2-6d_1-6d_2+7$ cusps. Now we will compute the number $d(F)$ of nodes of $\Delta(F)$. We will use the following theorem of Serre (see \cite{mil}, p. 85):

\begin{theo}\label{thmgenusdelta}
If $\Gamma$ is an irreducible curve of degree $d$ and genus $g$  in the complex projective plane
then $$\frac{1}{2} (d-1)(d-2)= g + \sum_{z\in \Sing(\Gamma)} \delta_z,$$
where $\delta_z$ denotes the delta invariant of a point $z$.
\end{theo}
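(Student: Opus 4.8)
The plan is to obtain the identity as the numerical shadow of the normalization map $\nu\colon\tilde\Gamma\to\Gamma$: the left-hand side is the arithmetic genus of any plane curve of degree $d$, the geometric genus $g$ is the genus of the smooth model $\tilde\Gamma$, and the discrepancy between the two is concentrated at the singular points and measured exactly by the $\delta_z$. So the whole argument is a bookkeeping of Euler characteristics of coherent sheaves on $\Gamma$.

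First I would record the arithmetic genus of a plane curve, which does not see the singularities at all. For $\Gamma\subset\mathbb{P}^2$ of degree $d$ the ideal-sheaf sequence
\[
0\to\mathcal{O}_{\mathbb{P}^2}(-d)\to\mathcal{O}_{\mathbb{P}^2}\to\mathcal{O}_\Gamma\to 0,
\]
together with the standard cohomology of line bundles on $\mathbb{P}^2$, gives $\chi(\mathcal{O}_\Gamma)=1-\tfrac12(d-1)(d-2)$, i.e. $p_a(\Gamma)=\tfrac12(d-1)(d-2)$. This is entirely formal and independent of $\Sing(\Gamma)$.

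Next, since $\Gamma$ is irreducible its normalization $\tilde\Gamma$ is a smooth irreducible projective curve, whose genus is by definition the geometric genus $g$ of $\Gamma$, and $\nu$ is an isomorphism over $\Gamma\setminus\Sing(\Gamma)$. I would then consider the short exact sequence of sheaves on $\Gamma$
\[
0\to\mathcal{O}_\Gamma\to\nu_*\mathcal{O}_{\tilde\Gamma}\to\mathcal{Q}\to 0,
\]
where $\mathcal{Q}$ is a skyscraper sheaf supported on the finite set $\Sing(\Gamma)$ with stalk $\tilde{\mathcal{O}}_z/\mathcal{O}_z$ at $z$, the quotient of the integral closure of the local ring by the local ring; by the very definition of the delta invariant $\dim_{\C}\tilde{\mathcal{O}}_z/\mathcal{O}_z=\delta_z$, so $\chi(\mathcal{Q})=\sum_{z\in\Sing(\Gamma)}\delta_z$. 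Taking Euler characteristics and using that $\nu$ is finite, so that $H^i(\Gamma,\nu_*\mathcal{O}_{\tilde\Gamma})=H^i(\tilde\Gamma,\mathcal{O}_{\tilde\Gamma})$ and hence $\chi(\nu_*\mathcal{O}_{\tilde\Gamma})=1-g$, additivity of $\chi$ yields
\[
1-\tfrac12(d-1)(d-2)=(1-g)-\sum_{z\in\Sing(\Gamma)}\delta_z,
\]
which rearranges to the stated formula.

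The one place where I expect real content, rather than formal manipulation, is the identification of the cokernel $\mathcal{Q}$: I must know that the integral closure of $\mathcal{O}_\Gamma$ in its total ring of fractions is precisely $\nu_*\mathcal{O}_{\tilde\Gamma}$, that this extension is finite, and that its local stalks reproduce $\tilde{\mathcal{O}}_z/\mathcal{O}_z$. Over $\C$ this is classical — finiteness of integral closure for the excellent local rings of a plane curve, together with the fact that the points of $\tilde\Gamma$ over $z$ realize exactly the branches and hence the normalization of $\mathcal{O}_z$ — but it is the step that actually connects the sheaf-theoretic computation to the analytically defined $\delta_z$. Everything preceding and following it is routine cohomology of coherent sheaves on a projective curve.
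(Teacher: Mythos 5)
You should first be aware that the paper contains no proof of this statement: it is quoted as a theorem of Serre, with a reference to Milnor's book \cite{mil} (p.~85), and is used as a black box in Sections~4 and~6. Your argument therefore supplies a proof where the paper gives none, and it is the standard sheaf-theoretic one: $\chi(\mathcal{O}_\Gamma)=1-\tfrac{1}{2}(d-1)(d-2)$ from the ideal-sheaf sequence, $\chi(\nu_*\mathcal{O}_{\tilde\Gamma})=\chi(\mathcal{O}_{\tilde\Gamma})=1-g$ by finiteness of $\nu$, and the discrepancy accounted for by the skyscraper $\mathcal{Q}=\nu_*\mathcal{O}_{\tilde\Gamma}/\mathcal{O}_\Gamma$; the Euler-characteristic bookkeeping is correct. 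The one point that must be made precise --- and you correctly flag it as the only non-formal step --- is that your $\delta_z=\dim_\C\bigl(\nu_*\mathcal{O}_{\tilde\Gamma}/\mathcal{O}_\Gamma\bigr)_z$ agrees with the delta invariant as the paper actually uses it, namely Milnor's invariant computed from Puiseux expansions (Theorem~\ref{milnor2} of the paper). This identification is classical but deserves a citation: the algebraic local ring at $z$ and its analytic completion have normalization quotients of the same length (normalization commutes with completion for reduced excellent curve rings), and that analytic length equals Milnor's $\delta_z$, the number of virtual double points, e.g.\ via the relation $2\delta=\mu+r-1$ or directly from the Puiseux formula. With that reference supplied, your proof is complete, and it is essentially the proof this formula receives in the modern literature; it has the additional merit of working for any reduced plane curve, irreducibility being needed only to speak of a single genus $g$.
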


First we compute the degree of the discriminant:

\begin{lem}\label{lemdegdisc}
Let $F=(f,g)\in \Omega(d_1,d_2)$ be a generic mapping. If $d_1\ge d_2$ then $\deg\Delta(F)= d_1(d_1+d_2-2)$.
\end{lem}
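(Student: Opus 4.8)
The plan is to compute $\deg\Delta(F)$ as the number of points in which $\Delta(F)$ meets a generic affine line $\ell=\{\alpha u+\beta v=\gamma\}$ in the target, and to transport this count back to the critical curve. For a generic line the projective closure $\overline{\Delta(F)}$ meets $\overline{\ell}$ in exactly $\deg\Delta(F)$ points, all in the affine plane, since a generic direction avoids the finitely many points of $\overline{\Delta(F)}\cap L_\infty$. By Lemma \ref{bir}(1) the restriction $F|_{C(F)}$ is birational onto $\Delta(F)$, so it fails to be injective only over a finite set $B\subset\Delta(F)$; a generic $\ell$ avoids $B\cup\Sing(\Delta(F))$, and then taking preimages gives a bijection between $\Delta(F)\cap\ell$ and $C(F)\cap F^{-1}(\ell)$. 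Hence $\deg\Delta(F)=|C(F)\cap F^{-1}(\ell)|$ for generic $\ell$.

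Next I would identify the two curves being intersected and apply B\'ezout. The critical curve $C(F)=\{J=0\}$, $J=f_xg_y-f_yg_x$, has degree $d_1+d_2-2$ as in Theorem \ref{thmcusps}, while $F^{-1}(\ell)=\{\alpha f+\beta g-\gamma=0\}$ has degree $\max(d_1,d_2)=d_1$, using $\deg f=d_1\ge d_2=\deg g$ and $\alpha\neq0$ for a generic line. B\'ezout then gives total intersection number $d_1(d_1+d_2-2)$ for the projective closures. For generic $\ell$ the intersection points all lie at fold points of $F$ (a generic $\ell$ misses the finitely many critical values that are cusps or nodes), and in fold normal form $C(F)$ and $F^{-1}(\ell)$ meet transversally there whenever $\alpha\neq0$, so every B\'ezout point counts with multiplicity one. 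Thus the whole count reduces to the affine picture, once I know there are no intersection points at infinity.

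The hard part will be exactly this last point, namely $\overline{C(F)}\cap\overline{F^{-1}(\ell)}\cap L_\infty=\emptyset$ for generic $F$ and $\ell$. The points of $\overline{C(F)}\cap L_\infty$ are the zeros of the leading Jacobian $J_\infty=(f_{d_1})_x(g_{d_2})_y-(f_{d_1})_y(g_{d_2})_x$ of the top homogeneous parts $f_{d_1},g_{d_2}$, and these are $d_1+d_2-2$ simple points by Lemma \ref{infty}; the points of $\overline{F^{-1}(\ell)}\cap L_\infty$ are the zeros of the leading form of $\alpha f+\beta g$. When $d_1>d_2$ this leading form is $\alpha f_{d_1}$, so I must check that no zero of $J_\infty$ is a zero of $f_{d_1}$: at a simple root $p$ of $f_{d_1}$ one has $\nabla f_{d_1}(p)\neq0$, and $J_\infty(p)\neq0$ precisely when $\nabla g_{d_2}(p)$ is not proportional to $\nabla f_{d_1}(p)$, which holds for a generic $g_{d_2}$ chosen independently of $f$. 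When $d_1=d_2$ the leading form is $\alpha f_{d_1}+\beta g_{d_1}$, and a zero $p$ of $J_\infty$ could be cancelled only if $f_{d_1}(p)=g_{d_1}(p)=0$; but generic leading parts $f_{d_1},g_{d_1}$ share no common zero, so at each zero of $J_\infty$ one of the two forms is nonzero and a generic $(\alpha,\beta)$ keeps the combination nonzero. In either case the projective closures meet only in the affine plane, and combining this with the B\'ezout count and the bijection of the first paragraph yields $\deg\Delta(F)=d_1(d_1+d_2-2)$.
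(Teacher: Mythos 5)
Your proof is correct and follows essentially the same route as the paper's: intersect $\Delta(F)$ with a generic line, use generic injectivity of $F|_{C(F)}$ to transfer the count to $C(F)\cap F^{-1}(\ell)$, and apply B\'ezout to the curves $\{J=0\}$ of degree $d_1+d_2-2$ and $\{\alpha f+\beta g=\gamma\}$ of degree $d_1$. The only difference is that you spell out the multiplicity-one argument at fold points and give a direct leading-form argument for the absence of common points at infinity, a fact the paper's proof merely asserts (and which is justified there only later, via Lemma \ref{lemstyczne}).
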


\begin{proof}
Let $L\subset \C^2$ be a generic line $\{ax+by+c=0\}$. Then $L$ intersects $\Delta(F)$ in smooth points and $\deg\Delta(F)=\# L\cap \Delta(F)$.
If $j\colon C(F)\to \Delta(F)$ is a mapping induced by $F$ then $\# L\cap \Delta(F)=\# j^{-1}(L\cap \Delta(F)).$ The curve $j^{-1}(L)=\{af+bg+c=0\}$ has no common points at infinity
with $C(F)$. Hence by Bezout Theorem we have $\# j^{-1}(L\cap \Delta(F))=(\deg j^{-1}(L))(\deg C(F))=d_1(d_1+d_2-2)$. Consequently $\deg \Delta(F) = d_1(d_1+d_2-2)$.
\end{proof}

We have the following method of computing the delta invariant (see \cite{mil}, p. 92-93):

\begin{theo}\label{milnor2}
Let $V_0\subset \C^2$ be an irreducible germ of an analytic curve with the Puiseux parametrization of the form
 $$z_1=t^{a_0},\ z_2=\sum_{i>0} \lambda_i t^{a_i}, \text{ where }\lambda_i\neq 0,\ a_1<a_2<a_3<\ldots$$
Let $D_j=\gcd(a_0,a_1,\ldots, a_{j-1}).$ Then $$\delta_0=\frac{1}{2}\sum_{j\ge 1} (a_j-1)(D_j-D_{j+1}).$$
If $V=\bigcup^r_{i=1} V_i$ has $r$ branches then $$\delta(V)=\sum^r_{i=1} \delta(V_i)+\sum_{i<j} V_i\cdot V_j,$$
where $V\cdot W$ denotes the intersection product.
\end{theo}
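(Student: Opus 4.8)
The plan is to prove the two assertions in turn: the closed formula for the $\delta$-invariant of a single branch, and then the additivity formula, the latter reducing the reducible case to the irreducible one plus the pairwise intersection numbers. Throughout I use the algebraic description $\delta=\dim_\C\widetilde{\mathcal{O}}/\mathcal{O}$, where $\mathcal{O}$ is the local ring of the germ at $0$ and $\widetilde{\mathcal{O}}$ its normalization.

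For the single branch $V_0$, write $z_1(t)=t^{a_0}$ and $z_2(t)=\sum_{i>0}\lambda_i t^{a_i}$. The local ring is the subalgebra $\mathcal{O}=\C\{z_1(t),z_2(t)\}\subset\C\{t\}$, and since the given parametrization is the normalization of $V_0$ we may assume $\gcd(a_0,a_1,\ldots)=1$ (otherwise $t\mapsto(z_1,z_2)$ would not be generically injective); hence $\widetilde{\mathcal{O}}=\C\{t\}$ and $\delta_0=\dim_\C\C\{t\}/\mathcal{O}$. Passing to orders via $v=\mathrm{ord}_t$ converts this into a combinatorial count: letting $\Gamma=\{v(h):h\in\mathcal{O}\setminus\{0\}\}$ be the value semigroup, one has $\delta_0=\#(\N\setminus\Gamma)$, the number of gaps of $\Gamma$ (pick, for each attainable order, one element of $\mathcal{O}$ realizing it; the monomials $t^n$ with $n\notin\Gamma$ then form a basis of the quotient).

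The core of the argument is the computation of these gaps. I would extract a minimal generating set of $\Gamma$ by running the Euclidean algorithm on the exponents: the sequence $D_j=\gcd(a_0,\ldots,a_{j-1})$ decreases from $a_0=D_1$ and stabilises at $1$, and $\Gamma$ acquires a new generator exactly at the indices $j$ where $D_j>D_{j+1}$, i.e.\ at the characteristic exponents. This is precisely why a non-characteristic $a_j$, for which $D_j=D_{j+1}$, contributes the vanishing term $(a_j-1)(D_j-D_{j+1})=0$. From the recursive expression of the generators through the ratios $D_j/D_{j+1}$ one counts the gaps, most cleanly by computing the conductor $c$ of $\Gamma$ and invoking the symmetry (Gorenstein) property of a plane-branch semigroup to get $\delta_0=c/2$; alternatively one inducts on the number of characteristic exponents, checking that both sides of the asserted identity satisfy the same recursion when a further Puiseux pair is adjoined. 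Either route makes the sum telescope over the drops $D_j-D_{j+1}$, weighted by $a_j-1$, giving $\delta_0=\tfrac12\sum_{j\ge1}(a_j-1)(D_j-D_{j+1})$. I expect this gap-count, and the honest bookkeeping of the semigroup generators, to be the main obstacle.

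For the additivity formula I argue through the normalization together with a Mayer--Vietoris sequence. Each branch $V_i$ normalizes to $\C\{t_i\}$, so $\widetilde{\mathcal{O}}=\prod_{i=1}^r\C\{t_i\}$; inserting the intermediate ring $\prod_i\mathcal{O}_{V_i}$, into which $\mathcal{O}$ injects because $V$ is reduced, splits the dimension as
$$\delta(V)=\sum_{i=1}^r\delta(V_i)+\dim_\C\Big(\prod_i\mathcal{O}_{V_i}\big/\mathcal{O}\Big).$$
To evaluate the last colength, note that for two components $C_1,C_2$ with ideals $I_1,I_2$ the exact sequence
$$0\to\mathcal{O}/(I_1\cap I_2)\to\mathcal{O}/I_1\oplus\mathcal{O}/I_2\to\mathcal{O}/(I_1+I_2)\to0$$
identifies it with $\dim_\C\mathcal{O}/(I_1+I_2)=C_1\cdot C_2$. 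Writing $V=V_1\cup(V_2\cup\cdots\cup V_r)$ and inducting on $r$, using the additivity $V_1\cdot(V_2\cup\cdots\cup V_r)=\sum_{j\ge2}V_1\cdot V_j$ of the intersection product, yields $\dim_\C(\prod_i\mathcal{O}_{V_i}/\mathcal{O})=\sum_{i<j}V_i\cdot V_j$. Combining the two parts gives the theorem.
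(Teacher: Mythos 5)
First, a point of comparison: the paper does not prove this statement at all. It is quoted, with the citation ``(see \cite{mil}, p.~92--93)'', from Milnor's book, exactly as Theorem \ref{thmgenusdelta} is quoted from Serre; so there is no internal proof to measure your argument against, and it has to stand on its own. Your second half does stand: taking $\delta=\dim_\C\widetilde{\mathcal{O}}/\mathcal{O}$, splitting off the intermediate ring $\prod_i\mathcal{O}_{V_i}$, identifying the cokernel of $\mathcal{O}/(I_1\cap I_2)\hookrightarrow\mathcal{O}/I_1\oplus\mathcal{O}/I_2$ with $\mathcal{O}/(I_1+I_2)$, whose dimension is by definition the intersection number, and then inducting on the number of branches using additivity of intersection multiplicities --- this is the standard argument, it is complete, and it is correct.

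The single-branch half, however, has a genuine gap: every step that actually produces the formula is asserted rather than carried out. Reducing $\delta_0$ to the number of gaps of the value semigroup $\Gamma$ is fine, but from there you invoke (i) the structure of the minimal generators of $\Gamma$, (ii) the symmetry (Gorenstein property) of $\Gamma$, and (iii) a telescoping computation turning the conductor into $\sum_{j}(a_j-1)(D_j-D_{j+1})$, and none of these is proved or even performed. Moreover (i) is stated in a misleading way: the new generators of $\Gamma$ are \emph{not} the characteristic exponents $a_j$ themselves but are produced by the recursion $v_{q+1}=(D_q/D_{q+1})\,v_q+a_{q+1}-a_q$ over the characteristic indices. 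Concretely, for $z_1=t^4$, $z_2=t^6+t^7$ the semigroup is $\langle 4,6,13\rangle$, not $\langle 4,6,7\rangle$; the first has $8$ gaps and conductor $16$, matching $\tfrac12\bigl[(6-1)(4-2)+(7-1)(2-1)\bigr]=8$, while the second has only $5$ gaps. Getting from this recursion to the conductor value and then, via symmetry, to $\delta_0=c/2$ is precisely the content of the theorem --- it is the ``main obstacle'' you yourself flag, and it is left unresolved. Citing Ap\'ery/Zariski for (i) and (ii) would be a legitimate way to close the gap, but as written the branch formula is a plan for a proof, not a proof.
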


The main result of this section will be based on the following:

\begin{theo}\label{theodeltaz}
Let $F\in \Omega(d_1,d_2)$ be a generic mapping. Let $d_1\geq d_2$ and $d=\gcd(d_1,d_2)$. Denote by $\overline{\Delta}$ the projective closure
of the discriminant $\Delta$. Then $$\sum_{z\in (\overline{\Delta}\setminus \Delta)} \delta_z= \frac{1}{2}d_1(d_1-d_2)(d_1+d_2-2)^2+\frac{1}{2}(-2d_1+d_2+d)(d_1+d_2-2).$$
\end{theo}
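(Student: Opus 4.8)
The plan is to read off the local analytic type of $\overline{\Delta}$ at each of its points at infinity and then apply Milnor's formula (Theorem~\ref{milnor2}). By Lemma~\ref{bir}(1) the map $j=F|_{C(F)}\colon \overline{C(F)}\to\overline{\Delta}$ is birational, so the branches of $\overline{\Delta}$ at infinity are exactly the images of the branches of $\overline{C(F)}$ at its points at infinity. By Theorem~\ref{thmcusps} and the following remark, $\overline{C(F)}$ meets $L_\infty$ transversally in $N:=d_1+d_2-2$ distinct smooth points, corresponding to the roots $b_1,\ldots,b_N$ of the leading form $J_h(1,w)=0$; after a generic linear change of the source coordinates (which does not change $\Delta$) we may assume these points are $(1:b_i:0)$, and each branch is smooth, parametrized by a local coordinate $s$ with $s=0$ at infinity. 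Writing $f=f_h+\cdots$, $g=g_h+\cdots$, along the $i$-th branch $f=s^{-d_1}(f_h(1,b_i)+O(s))$ and $g=s^{-d_2}(g_h(1,b_i)+O(s))$. Euler's identity applied to $J_h(1,b_i)=0$ shows that for generic $F$ we have $f_h(1,b_i)\neq0$ and $g_h(1,b_i)\neq0$ at every $b_i$: if $f_h(1,b_i)=0$ then $(f_h)_x(1,b_i)=-b_i(f_h)_y(1,b_i)$, whence $J_h(1,b_i)=-d_2\,(f_h)_y(1,b_i)\,g_h(1,b_i)$, which is nonzero for generic $f_h,g_h$ (the case of $g_h$ is symmetric). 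Consequently, when $d_1>d_2$ every branch tends to the single point $p_\infty=(1:0:0)$, whereas when $d_1=d_2$ the branches tend to the $N$ points $(f_h(1,b_i):g_h(1,b_i):0)$, pairwise distinct for generic $F$, each branch being smooth at its limit; in the latter case $\sum_z\delta_z=0$, in agreement with the asserted formula (both summands vanish when $d_1=d_2$). Hence assume $d_1>d_2$ and set $e:=d_1-d_2$.

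In the chart $(\sigma,\tau)=(1/f,\,g/f)$ centred at $p_\infty$, the $i$-th branch reads
\[
\sigma=s^{d_1}\bigl(\nu_i+O(s)\bigr),\qquad \tau=s^{e}\bigl(\mu_i+O(s)\bigr),
\]
with $\nu_i=1/f_h(1,b_i)\neq0$ and $\mu_i=g_h(1,b_i)/f_h(1,b_i)\neq0$. Since $e<d_1$, normalizing $\tau$ by the analytic change $t=\tau^{1/e}=s\,\mu_i^{1/e}(1+O(s))$ gives a Puiseux parametrization
\[
z_1:=\tau=t^{e},\qquad z_2:=\sigma=\lambda_1 t^{d_1}+\lambda_2 t^{d_1+1}+\cdots,\qquad \lambda_1\neq0,
\]
and for generic $F$ the coefficient $\lambda_2$ is nonzero. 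With $a_0=e$, $a_1=d_1$, $a_2=d_1+1$ the gcd sequence of Theorem~\ref{milnor2} is $D_1=e$, $D_2=\gcd(e,d_1)=d$, $D_3=\gcd(e,d_1,d_1+1)=1$, so that each branch has delta invariant
\[
\delta_{\mathrm{br}}=\tfrac12\bigl[(d_1-1)(e-d)+d_1(d-1)\bigr].
\]

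For $i\neq j$ the two branches meet (as germs) only at $p_\infty$, and I would compute the local intersection multiplicity $I(B_i,B_j)$ as $\operatorname{ord}_s\prod_{k=1}^{e}\bigl(\sigma_i(s)-\sigma_j(u_k)\bigr)$, where $u_1,\ldots,u_e$ solve $\tau_j(u_k)=\tau_i(s)$. One finds $u_k=O(s)$, each factor of order exactly $d_1$ in $s$, the governing product of leading coefficients (amounting to the inequality $\nu_i^{\,e/d}\mu_j^{\,d_1/d}\neq \nu_j^{\,e/d}\mu_i^{\,d_1/d}$) being nonzero for generic $F$, so $I(B_i,B_j)=d_1 e$. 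Since all $N$ branches pass through $p_\infty$, the multi-branch form of Milnor's formula (Theorem~\ref{milnor2}) gives
\[
\sum_{z\in\overline{\Delta}\setminus\Delta}\delta_z=\delta_{p_\infty}=N\,\delta_{\mathrm{br}}+\binom{N}{2}\,d_1 e,
\]
and substituting $\delta_{\mathrm{br}}$, $e=d_1-d_2$ and $N=d_1+d_2-2$ and simplifying yields exactly
\[
\tfrac12 d_1(d_1-d_2)(d_1+d_2-2)^2+\tfrac12(-2d_1+d_2+d)(d_1+d_2-2).
\]

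The genuinely delicate points, and the main obstacle, are the two genericity claims about the local structure at infinity: that the Puiseux series of each branch carries a nonzero coefficient at the exponent $d_1+1$ (equivalently $D_3=1$), which is what pins down $\delta_{\mathrm{br}}$, and that the leading-coefficient product governing $I(B_i,B_j)$ does not accidentally vanish, so that distinct branches have contact no higher than $d_1e$. Both reduce to showing that certain polynomials in the top two homogeneous layers of $f$ and $g$ are not identically zero; I expect to verify this either on one explicit example or by a direct submersivity argument in the spirit of Lemmas~\ref{infty0}--\ref{lemtrans3}. Granting these, the displayed computation is forced. As a consistency check, the contribution of $p_\infty$ to $\overline{\Delta}\cdot L_\infty$ is $\sum_i\operatorname{ord}_s\sigma=N d_1$, recovering $\deg\overline{\Delta}=d_1(d_1+d_2-2)$ of Lemma~\ref{lemdegdisc}.
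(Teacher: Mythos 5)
Your proof skeleton is the same as the paper's: parametrize the $N=d_1+d_2-2$ branches of $\overline{C(F)}$ at infinity, push them forward to Puiseux expansions of the branches $V_i$ of $\overline{\Delta}$ at its unique point at infinity (when $d_1>d_2$), obtain $2\delta(V_i)=(d_1-1)(d_1-d_2-d)+d_1(d-1)$ from Theorem~\ref{milnor2} via the exponent sequence $(d_1-d_2,\,d_1,\,d_1+1)$, get $V_i\cdot V_j=d_1(d_1-d_2)$ from separation of leading coefficients, and sum. Your arithmetic agrees with the paper's; your separation condition $\nu_i^{e/d}\mu_j^{d_1/d}\neq\nu_j^{e/d}\mu_i^{d_1/d}$ is, after clearing denominators, exactly the paper's Lemma~\ref{row}, and your Euler-identity reduction of $f_h(1,b_i)g_h(1,b_i)\neq 0$ to ``$f_h$ has no multiple root and $f_h,g_h$ have no common root'' is a correct (and slightly slicker) substitute for part of Lemma~\ref{lemstyczne}.

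The genuine gap is the one you flag and then postpone: claims (i) $\lambda_2\neq 0$ (equivalently $D_3=1$, i.e.\ the exponent $d_1+1$ actually occurs in the Puiseux expansion) and (ii) the leading-coefficient separation for $i\neq j$ are not peripheral verifications — they are the technical core of the paper's proof of Theorem~\ref{theodeltaz}, and your proposal contains no proof of either. In the paper, (i) is the inequality $d_2c\neq d_1d$ of Lemma~\ref{lemstyczne}, established by forming an incidence variety $Z\subset L_\infty\times\Omega_2(d_1,d_2)$ cut out by $\tilde{J}(F)(p)=0$ together with the vanishing of the relevant second-order expression, using the linear action on $L_\infty$ to reduce to one fixed point, and checking in explicit coordinates on $\Omega_2(d_1,d_2)$ that the equations are independent, so that $Z$ has codimension $2$ and its projection misses a dense open set; (ii) is Lemma~\ref{row}, proved by an analogous codimension-$3$ argument on $L_\infty\times L_\infty\times\Omega_2(d_1,d_2)$. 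Nothing here is automatic: the coefficients $c,d$ mix the top two homogeneous layers of $f,g$ with first-order data of the branch of $C(F)$, and a priori the bad locus could dominate $\Omega_2(d_1,d_2)$ — ruling that out is precisely what the codimension count does. Your fallback of ``one explicit example'' also needs supplementation: you must first show the good locus is constructible (so density follows from nonemptiness) and then exhibit an example for every pair $(d_1,d_2)$, which is not obviously easier than the submersion argument. The same issue recurs in your $d_1=d_2$ case, where smoothness of each branch at infinity and pairwise distinctness of the limit points are again unproved genericity claims (in the paper: $c\neq d$ from Lemma~\ref{lemstyczne} plus a Lemma~\ref{row}-type statement). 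Until (i) and (ii) are actually proved, your computation, though correct, is conditional.
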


\begin{proof}
Let $\tilde{f}(x,y,z)=z^{d_1}f\left(\frac{x}{z},\frac{y}{z}\right)$ and $\tilde{g}(x,y,z)=z^{d_2}g\left(\frac{x}{z},\frac{y}{z}\right)$ be the homogenizations of $f$ and $g$, respectively,  and let $\overline{f}(x,z)=\tilde{f}(x,1,z)$ and $\overline{g}(x,z)=\tilde{g}(x,1,z)$. For a generic mapping the curves $C(F)$ and $\{f=0\}$ have no common points at infinity (see Lemma \ref{lemstyczne}). Moreover we may assume that $(1:0:0)\notin \overline{C(F)}$. Thus $F$ extends to a neighborhood of $\overline{C(F)}\cap L_\infty$ on which it is given by the formula
$$\overline{F}(x,z)= \left(z^{d_1-d_2}\frac{\overline{g}(x,z)}{\overline{f}(x,z)}, \frac{z^{d_1}}{\overline{f}(x,z)}\right).$$

Let  $\{P_1,\ldots,P_{d_1+d_2-2}\}=\overline{C(F)}\cap L_\infty$, fix a point $P=P_i$. The curve $\overline{C(F)}$ is transversal to the line at infinity so it has a local parametrization at $P$ of the form $\gamma(t):=(\sum_i e_it^i,t)$. We have the following:

\begin{lem}\label{lemstyczne}
If $F$ is a generic mapping then $\overline{f}(P)\neq 0$, $\overline{g}(P)\neq 0$ and
$$\overline{f}(\gamma(t))=\overline{f}(P)(1+ct+\ldots),\ \overline{g}(\gamma(t))=\overline{g}(P)(1+dt+\ldots),$$
where $cd\neq 0$ and $d_2c\neq d_1d$.
\end{lem}

\begin{proof}
Let $\tilde{J}=\tilde{J}(F)$ be the homogenization of $J(F)$. Obviously $\tilde{J}=\pa{\tilde{f}}{x}\pa{\tilde{g}}{y}-\pa{\tilde{f}}{y}\pa{\tilde{g}}{x}$.
Let $\overline{J}(x,z)=\tilde{J}(x,1,z)$. Since $\overline{J}(\gamma(t))=0$ and $\pa{\gamma(t)}{t}_{|t=0}=(e_1,1)$ we have
$$\overline{f}(P)c\pa{\overline{J}(P)}{x}=\left(\pa{\overline{f}(\gamma(t))}{t}\pa{\overline{J}(\gamma(t))}{x}\right)_{|t=0}=$$
$$\left(\pa{\overline{f}(\gamma(t))}{x}a_1\pa{\overline{J}(\gamma(t))}{x}+\pa{\overline{f}(\gamma(t))}{z}\pa{\overline{J}(\gamma(t))}{x}\right)_{|t=0}=$$
$$\left(-\pa{\overline{f}(\gamma(t))}{x}\pa{\overline{J}(\gamma(t))}{z}+\pa{\overline{f}(\gamma(t))}{z}\pa{\overline{J}(\gamma(t))}{x}\right)_{|t=0}=
\pa{\overline{f}(P)}{z}\pa{\overline{J}(P)}{x}- \pa{\overline{f}(P)}{x}\pa{\overline{J}(P)}{z}.$$

Consider the set
$$X=\left\{(p,F)\in L_\infty\times\Omega_2(d_1,d_2):\ \tilde{J}(F)(p)=
\left(\pa{\tilde{f}}{z}\pa{\tilde{J}(F)}{x}-\pa{\tilde{f}}{x}\pa{\tilde{J}(F)}{z}\right)(p)=0\right\}.$$

Note that if $\overline{f}(P)=0$ or $c=0$ then the fiber over $F$ of the projection from $X$ to $\Omega_2(d_1,d_2)$ is non-empty. Hence it suffices to prove that $X$ has codimension at least $2$.

Let $p=(0:1:0)$ and $q=(a:b:0)\in L_\infty\setminus \{(1:0:0)\}$. Let $\tilde{T}(x,y,z)=(bx-ay,y,z)$ so that $\tilde{T}(q)=p$. Take $T(x,y)=(bx-ay,y).$ Note that $\tilde{J}(F\circ T)=(\tilde{J}(F)\circ \tilde{T})J(\tilde{T})=b\tilde{J}(F)\circ \tilde{T}$. Furthermore
$$\pa{\tilde{f}\circ \tilde{T}}{z}\pa{\tilde{J}(F\circ T)}{x}-\pa{\tilde{f}\circ T}{x}\pa{\tilde{J}(F\circ T)}{z}=b^2\left(\pa{\tilde{f}}{z}\pa{\tilde{J}(F)}{x} -\pa{\tilde{f}}{x}\pa{\tilde{J}(F)}{z}\right)\circ \tilde{T}.$$
Thus $(p,F)\mapsto(T^{-1}(p),F\circ T)$ is an isomorphism of $X_p:=X\cap(\{p\}\times\Omega_2(d_1,d_2))$ and $X\cap(\{q\}\times\Omega_2(d_1,d_2))$. So it is enough to show that $X_p$ has codimension $2$ in $Y_p:=\{p\}\times\Omega_2(d_1,d_2)$.

Let $a_i$ be the parameters in $\Omega_2(d_1,d_2)$ giving the coefficients of $\tilde{f}$ (and of $f$) at $x^{d_1-i}y^i$ and let $b_i$ and $c_i$ describe respectively the coefficients of $\tilde{g}$ at $x^{d_2-i}y^i$ and $x^{d_2-i-1}y^iz$.

The first equation of $X_p$ is $d_2a_{d_1-1}b_{d_2}-d_1a_{d_1}b_{d_2-1}=0$ and the only summand of the second containing $c_{d_2-1}$ is $-(a_{d_1-1})^2(d_2-1)c_{d_2-1}$. Clearly those equations are independent outside the set $\{a_{d_1-1}=0\}$. Moreover $\{a_{d_1-1}=d_2a_{d_1-1}b_{d_2}-d_1a_{d_1}b_{d_2-1}=0\}=\{a_{d_1-1}=a_{d_1}=0\}\cup\{a_{d_1-1}=b_{d_2-1}=0\}$, thus $X_p$ has codimension 2 in $Y_p$.

Finally note that if $d_2c=d_1d$ then $$d_2\overline{g}(P)\left(\pa{\overline{f}}{z}\pa{\overline{J}}{x}-\pa{\overline{f}}{x}\pa{\overline{J}}{z}\right)(P)=
d_1\overline{f}(P)\left(\pa{\overline{g}}{z}\pa{\overline{J}}{x}-\pa{\overline{g}}{x}\pa{\overline{J}}{z}\right)(P).$$ Hence we consider the set
$$Z=\Big\{(p,F)\in L_\infty\times\Omega_2(d_1,d_2):\ \tilde{J}(F)(p)=$$
$$d_2\tilde{g}(p)\left(\pa{\tilde{f}}{z}\pa{\tilde{J}}{x}-\pa{\tilde{f}}{x}\pa{\tilde{J}}{z}\right)(p)-
d_1\tilde{f}(p)\left(\pa{\tilde{g}}{z}\pa{\tilde{J}}{x}-\pa{\tilde{g}}{x}\pa{\tilde{J}}{z}\right)(p)=0\Big\}.$$
Similarly as above one can show that it has codimension $2$, which concludes the proof.
\end{proof}

Let $C_p$ be the branch of $\overline{C(F)}$ at $P$. We find the Puiseux expansion of the branch $\overline{F}(C_P)$ of $\overline{\Delta(F)}$ at $\overline{F}(P)$.  We have
$$\overline{F}(\gamma(t))=\left(t^{d_1-d_2}\frac{\overline{g}(\gamma(t))}{\overline{f}(\gamma(t))},\frac{t^{d_1}}{\overline{f}(\gamma(t))}\right)=$$
$$\left(t^{d_1-d_2}(1+(d-c)t+\ldots)\frac{\overline{g}(P)}{\overline{f}(P)},\frac{t^{d_1}(1-ct+\ldots)}{\overline{f}(P)}\right).$$

If $d_1=d_2$ then by Lemma \ref{lemstyczne} we have $d-c\neq 0$ and $\overline{F}(C_P)$ is smooth at $\overline{F}(P)$. So assume $d_1>d_2$. Since the function $h(t)=\left(\frac{\overline{f}(P)}{\overline{g}(P)}\frac{\overline{g}(\gamma(t))}{\overline{f}(\gamma(t))}\right)^{\frac{1}{d_1-d_2}}=
1+\frac{d-c}{d_1-d_2}t+\ldots$ is invertible in $t=0$ we can introduce a new variable $T=th(t)$. We have $\overline{F}(\gamma(T))=\left(T^{d_1-d_2}\frac{\overline{g}(P)}{\overline{f}(P)},T^{d_1}h(t)^{-d_1}(1-ct+\ldots)\frac{1}{\overline{f}(P)}\right)$.
Moreover $$h(t)^{-d_1}(1-ct+\ldots)=\left(1-d_1\frac{d-c}{d_1-d_2}T+\ldots\right)(1-cT+\ldots)=1+\frac{d_2c-d_1d}{d_1-d_2}T+\ldots$$ By Lemma \ref{lemstyczne} we have $d_2c-d_1d\neq 0$ and we can apply Theorem \ref{milnor2} to compute $\delta(\overline{F}(C_P))_{\overline{F}(P)}$. Since $a_0=d_1-d_2$, $a_1=d_1$ and $a_2=d_1+1$, we have $2\delta(\overline{F}(C_P))_{\overline{F}(P)}=(d_1-1)(d_1-d_2-d)+(d_1+1-1)(d-1)=(d_1-1)(d_1-d_2-1)+(d-1)$, where $d=\gcd(d_1,d_2)$.

To proceed further we also need:

\begin{lem}\label{row}
If $F$ is a generic mapping then
$$\overline{f}(P_i)^{d_2}\overline{g}(P_j)^{d_1}\neq\overline{f}(P_j)^{d_2}\overline{g}(P_i)^{d_1}$$
for $i,j\in\{1,2,\ldots,d_1+d_2-2\}$ and $i\neq j$.
\end{lem}

\begin{proof}
Consider the set $X=\{(p,q,F)\in L_\infty\times L_\infty\times\Omega_2(d_1,d_2):\ p\neq q,\ \tilde{J}(F)(p)=\tilde{J}(F)(q)=
\tilde{f}(p)^{d_2}\tilde{g}(q)^{d_1}-\tilde{f}(q)^{d_2}\tilde{g}(p)^{d_1}=0\}$. Similarly as in Lemma \ref{lemstyczne} we will prove that $X$ has codimension $3$, so there is a dense open subset $S\subset \Omega(d_1,d_2)$ such that the projection from $X$ has empty fibers over $F\in S$.

Indeed, take $p=(1:0:0)$, $q=(0:1:0)$ and $Y:=\{(p,q)\}\times\Omega_2(d_1,d_2)$. It suffices to show that $X_0=X\cap Y$ has codimension $3$ in $Y$. Let $a_i$ and $b_i$ be the parameters in $\Omega_2(d_1,d_2)$ giving respectively the coefficients of $\tilde{f}$ at $x^{d_1-i}y^i$ and of $\tilde{g}$ at $x^{d_2-i}y^i$.

The three equations describing $X_0$ are $w_1=d_1a_0b_1-d_2a_1b_0=0$, $w_2=d_2a_{d_1-1}b_{d_2}-d_1a_{d_1}b_{d_2-1}=0$ and $w_3=a_0^{d_2}b_{d_2}^{d_1}-a_{d_1}^{d_2}b_0^{d_1}=0$. Note that $X_0\cap\{a_0=0\}=\{a_0=b_0=w_2=0\}\cup\{a_0=a_1=a_{d_1}=w_2=0\}$ has codimension $3$. Similarly $X_0\cap\{b_0=0\}$ and $X_0\cap\{a_{d_1}=0\}$ have codimension $3$, however outside the set $\{a_0=b_0=a_{d_1}\}$ the three equations are obviously independent. Thus $X_0$ has codimension $3$ in $X$.
\end{proof}

Now we are in a position to compute $\sum_{z\in (\overline{\Delta}\setminus \Delta)} \delta_z$. If $d_1=d_2$ then $\overline{\Delta}$ has exactly $d_1+d_2-2$ smooth points at infinity and consequently $\sum_{z\in (\overline{\Delta}\setminus \Delta)} \delta_z=0$ (see the text after the proof of Lemma \ref{lemstyczne}). So assume $d_1>d_2$, then $\overline{\Delta}$ has only one point at infinity $Q=(1:0:0)$.
In $Q$ the curve $\overline{\Delta}$ has exactly $r=d_1+d_2-2$ branches $V_i=\overline{F}(C_{P_i})$. We computed above that $2\delta(V_i)_Q=(d_1-1)(d_1-d_2-1)+(d-1)$. Now we will compute $V_i\cdot V_j$.
Let $t_{a,b}(x,y)=(x+a,y+b)$. By the dynamical definition of intersection there exists a neighborhood $U$ of $0$, such that for small generic $a,b$ we have
$$V_i\cdot V_j=\# (U\cap V_i\cap t_{a,b}(V_j)).$$
This means that $V_i\cdot V_j$ is equal to the number of solutions of the following system:
$$\frac{\overline{g}(P_i)}{\overline{f}(P_i)}T^{d_1-d_2}=\frac{\overline{g}(P_j)}{\overline{f}(P_j)}S^{d_1-d_2}+a,$$
$$\frac{1}{\overline{f}(P_i)}T^{d_1}(1+\alpha_iT+\ldots)=\frac{1}{\overline{f}(P_j)}S^{d_1}(1+\alpha_jS+\ldots)+b,$$
where $a,b$ and $S,T$ are sufficiently small. Take
$$Q:(\C^2,0)\to (\C^2,0),$$
$$Q(T,S)=\left(\frac{\overline{g}(P_i)}{\overline{f}(P_i)}T^{d_1-d_2}-\frac{\overline{g}(P_j)}{\overline{f}(P_j)}S^{d_1-d_2},
\frac{1}{\overline{f}(P_i)}T^{d_1}(1+\alpha_iT+\ldots)-\frac{1}{\overline{f}(P_j)}S^{d_1}(1+\alpha_jS+\ldots)\right).$$
Thus we have $V_i\cdot V_j=\mult_0 Q$. Note that by Lemma \ref{row} the minimal homogenous polynomials of the two components of $Q$ have no nontrivial common zeroes, hence $V_i\cdot V_j=d_1(d_1-d_2)$. Consequently
$$\sum_i \delta(V_i)+ \sum_{i>j} V_i\cdot V_j=\frac{1}{2}[(d_1-1)(d_1-d_2-1)+(d-1)](d_1+d_2-2)+$$
$$\frac{1}{2}d_1(d_1-d_2)(d_1+d_2-2)(d_1+d_2-3)=$$
$$\frac{1}{2}d_1(d_1-d_2)(d_1+d_2-2)^2+\frac{1}{2}(-2d_1+d_2+d)(d_1+d_2-2).$$
\end{proof}

We can now prove the following:

\begin{theo}
There is a Zariski open, dense subset $U\subset \Omega_2(d_1,d_2)$
such that for every mapping $F\in U$ the discriminant $\Delta(F)=F(C(F))$ has only cusps and nodes as singularities. Let $d=\gcd(d_1,d_2)$.
Then the number of cusps is equal to
$$c(F)=d_1^2+d_2^2+3d_1d_2-6d_1-6d_2+7$$
and the number of nodes is equal to
$$d(F)=\frac{1}{2}\left[(d_1d_2-4)((d_1+d_2-2)^2-2)-(d-5)(d_1+d_2-2)-6\right].$$
\end{theo}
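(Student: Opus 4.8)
The plan is to apply Serre's genus formula (Theorem \ref{thmgenusdelta}) to the projective closure $\overline{\Delta}$ of the discriminant and to solve the resulting identity for the number of nodes $d(F)$, every other quantity in the formula being already known. The existence of the Zariski open dense set $U$ and the fact that the only singularities of $\Delta(F)$ are cusps and nodes are immediate from Theorem \ref{cor1} together with Lemma \ref{bir}, and the cusp count is $c(F)$ as in Theorem \ref{thmcusps}. Since $\overline{C(F)}$ is smooth, connected and transversal to the line at infinity (Theorem \ref{thmcusps}) and $F|_{C(F)}$ is birational onto its image (Lemma \ref{bir}(1)), the extension $\overline{F}\colon\overline{C(F)}\to\overline{\Delta}$ is the normalization map; hence $\overline{\Delta}$ is irreducible and its geometric genus equals $g=\frac{(d_1+d_2-3)(d_1+d_2-4)}{2}$.

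Next I would record the local delta invariants entering the formula. At a cusp of $F$ the discriminant is locally the ordinary cusp, with Puiseux data $a_0=2$, $a_1=3$, so Theorem \ref{milnor2} gives $\delta_z=1$; at a node two smooth branches cross transversally, so by the second part of Theorem \ref{milnor2} one has $\delta_z=0+0+1=1$. Therefore the affine contribution to the total sum of delta invariants is exactly $c(F)+d(F)$. Assuming without loss of generality $d_1\ge d_2$ (the target expression is symmetric in $d_1,d_2$ since it depends only on $d_1d_2$, $d_1+d_2$ and $d=\gcd(d_1,d_2)$), Lemma \ref{lemdegdisc} gives the degree $D:=\deg\overline{\Delta}=d_1(d_1+d_2-2)$, and Theorem \ref{theodeltaz} supplies the remaining sum $\sum_{z\in\overline{\Delta}\setminus\Delta}\delta_z$ of delta invariants at infinity.

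Assembling these inputs into Theorem \ref{thmgenusdelta} yields
$$\frac{1}{2}(D-1)(D-2)=g+\sum_{z\in\overline{\Delta}\setminus\Delta}\delta_z+c(F)+d(F),$$
so that
$$d(F)=\frac{1}{2}(D-1)(D-2)-g-\sum_{z\in\overline{\Delta}\setminus\Delta}\delta_z-c(F).$$
Substituting $D=d_1(d_1+d_2-2)$, the value of $g$, the expression from Theorem \ref{theodeltaz}, and the formula for $c(F)$, a routine simplification (grouping by powers of $d_1+d_2-2$) collapses the right-hand side to the stated closed form $d(F)=\frac{1}{2}\big[(d_1d_2-4)((d_1+d_2-2)^2-2)-(d-5)(d_1+d_2-2)-6\big]$.

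Since the computation is essentially an assembly of results already in hand, the only points requiring genuine care are: (i) identifying the geometric genus of $\overline{\Delta}$ with the genus of its smooth model $\overline{C(F)}$, which rests on the generic injectivity of $F|_{C(F)}$ and the smoothness of $\overline{C(F)}$ at infinity; and (ii) correctly isolating the delta contribution at infinity via Theorem \ref{theodeltaz}, whose derivation (valid in both the case $d_1=d_2$, where the points at infinity are smooth, and $d_1>d_2$, where they coalesce into a single highly singular point) is the genuinely hard input. Granting those, the final step is purely algebraic, and can be checked on small cases; for instance $d_1=3$, $d_2=2$ gives $c(F)=8$, $D=9$, $g=1$, $\sum_{z\in\overline{\Delta}\setminus\Delta}\delta_z=9$, hence $d(F)=28-1-9-8=10$, in agreement with the closed formula.
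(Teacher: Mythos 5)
Your proposal is correct and follows essentially the same route as the paper's proof: both assemble Lemma \ref{bir}, Theorem \ref{thmcusps} (for $c(F)$ and the genus of $\overline{C(F)}$), Lemma \ref{lemdegdisc} (degree $d_1(d_1+d_2-2)$), and Theorem \ref{theodeltaz} (delta invariants at infinity), then solve Serre's formula (Theorem \ref{thmgenusdelta}) for $d(F)$. Your added observations --- that each cusp and node contributes $\delta_z=1$, that birationality of $F|_{C(F)}$ identifies the geometric genus of $\overline{\Delta}$ with that of $\overline{C(F)}$, and the numerical check at $(d_1,d_2)=(3,2)$ --- are all consistent with what the paper leaves implicit.
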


\begin{proof}
Let $d_1\geq d_2$ and $D=d_1+d_2-2$. By Lemma \ref{lemdegdisc} we have $\deg\Delta(F)= d_1D$. From Lemma \ref{bir} we know that $\Delta(F)$ has only cusps and nodes as singularities and is birational with $C(F)$. Hence $\Delta(F)$ has genus $g=\frac{1}{2}(D-1)(D-2)$. Thus by Theorem \ref{thmgenusdelta} we have
$$\frac{1}{2} (d_1D-1)(d_1D-2)=\frac{1}{2}(D-1)(D-2)+c(F)+d(F)+ \sum_{z\in (\overline{\Delta}\setminus \Delta)} \delta_z.$$
 Substituting
$$\sum_{z\in (\overline{\Delta}\setminus \Delta)} \delta_z=\frac{1}{2}d_1(d_1-d_2)D^2+\frac{1}{2}(-2d_1+d_2+d)D$$
from Theorem \ref{theodeltaz} we obtain
$$2(c(F)+d(F))=d_1d_2D^2-D^2+3D-d_1D-d_2D-dD=(d_1d_2-2)D^2-(d-1)D.$$
Thus by Theorem \ref{thmcusps} we get:
$$d(F)=\frac{1}{2}\left[(d_1d_2-2)D^2-(d-1)D-2(D^2-2D+d_1d_2-1)\right]=$$
$$\frac{1}{2}\left[(d_1d_2-4)(D^2-2)-(d-5)D-6\right].$$
\end{proof}

\begin{re}
{\rm If $d_1=d_2=d$ then the discriminant has $2d-2$ smooth points at infinity and at each of these points it is tangent to the line $L_\infty$ (at infinity) with multiplicity $d.$
If $d_1>d_2$ then the discriminant has only one point at infinity with $d_1+d_2-2$ branches $V_1,\ldots, V_{d_1+d_2-2}$ and each of these branches has delta invariant
$$\delta(V_i)=\frac{(d_1-1)(d_1-d_2-1)+(\gcd(d_1,d_2)-1)}{2}$$ and $V_i\cdot L_\infty=d_1.$ Additionally $V_i\cdot V_j=d_1(d_1-d_2)$. In particular the branches $V_i$ are smooth if and only if
$d_1=d_2$ or $d_1=d_2+1.$}
\end{re}

\section{The complex sphere}
In the next two sections we show that our method can be easily generalized to the case when $X$ is a complex sphere.
Let $\phi=y^2+2xz$ and let $S$ be a complex sphere: $S=\{ (x,y,z): \phi=1\}$ (of course $S$ is linearly equivalent with a standard sphere $S':=\{ (x,y,z): x^2+y^2+z^2=1\}$).
Here we will study the set $\Omega_S(d_1,d_2)$. We consider on the set $\Omega_S(d_1,d_2)$ the Zariski topology, which is the induced  topology given by the mapping $\Theta : \Omega_3(d_1,d_2)\ni F\mapsto F|_S\in \Omega_S(d_1,d_2).$ 

First we compute the critical set $C(F)$ of a generic mapping $F=(f,g)\in \Omega_S(d_1,d_2)$.
Note that $x\in C(F)$ if rank $(\nabla \phi, \nabla f, \nabla g)<3,$  hence $C(F)$ is the intersection of $S$ and the surface given by
$$J(F)= \left|\begin{matrix} z & y & x \\ f_x & f_y & f_z \\ g_x & g_y & g_z \end{matrix}\right| = 0.$$

In particular we have:

\begin{co}
For a generic mapping $F\in \Omega_S(d_1,d_2)$ we have $\deg C(F)=2(d_1+d_2-1).$
\end{co}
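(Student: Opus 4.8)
The plan is a Bézout degree count on the quadric $S$. By definition $C(F)=S\cap\{J(F)=0\}$, so once we know $\deg\{J(F)=0\}$ and that for generic $F$ the intersection is proper and reduced, the degree of the curve is the product of the two surface degrees. First I would read the degree of $J(F)$ straight off the cofactor expansion
$$J(F)=z(f_yg_z-f_zg_y)-y(f_xg_z-f_zg_x)+x(f_xg_y-f_yg_x).$$
Every monomial occurring here is a product of one entry of the linear row $(z,y,x)$ (degree $1$), one first partial of $f$ (degree $d_1-1$) and one first partial of $g$ (degree $d_2-1$), so each term has degree $1+(d_1-1)+(d_2-1)=d_1+d_2-1$; hence $\deg J(F)\le d_1+d_2-1$, with equality for generic $F$ since the top homogeneous part of $J(F)$ is built from the leading forms of $f$ and $g$ and does not vanish identically (a nonempty Zariski-open condition). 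Thus the projective closure $\overline{\{J(F)=0\}}$ is a surface of degree $d_1+d_2-1$ in $\mathbb{P}^3$.

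Next I would pass to projective closures: $\overline S$ is the smooth, hence irreducible, quadric $\{y^2+2xz-w^2=0\}$ of degree $2$. For a generic $F$ one has $J(F)|_S\not\equiv 0$ — equivalently $\rank(\nabla\phi,\nabla f,\nabla g)=3$ somewhere on $S$, again a nonempty open condition — so $\overline S$ is not a component of $\overline{\{J(F)=0\}}$ and the restricted equation cuts a divisor of pure dimension one on $\overline S$. This divisor is exactly $\overline{C(F)}$ provided it has no one-dimensional component in the plane at infinity $\{w=0\}$; such a component could only be the conic $\overline S\cap\{w=0\}=\{y^2+2xz=0\}$, so it suffices to know that for generic $F$ this conic does not lie on $\overline{\{J(F)=0\}}$. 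Granting this, Bézout's theorem in $\mathbb{P}^3$ gives $\deg C(F)=\deg\overline{C(F)}=\deg\overline S\cdot\deg\overline{\{J(F)=0\}}=2(d_1+d_2-1)$. To read off the degree of the \emph{reduced} curve I would invoke Theorem \ref{trans}: for generic $F$ the jet $j^1(F)$ is transverse to $\Sigma^1$ in $J^1(S,\C^2)$, so $C(F)$ is smooth, hence reduced, and no intersection multiplicities inflate the count.

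The one genuinely delicate point, just as in the plane case, is the behaviour at infinity: I must exclude both the conic $\{w=0,\,y^2+2xz=0\}$ from $\overline{\{J(F)=0\}}$ and any failure of transversality along $\overline{C(F)}\cap\{w=0\}$, since either would spoil the clean Bézout equality. I expect to dispose of these exactly as in Lemma \ref{infty}: form the incidence variety of pairs $(p,F)$ with $p\in\{w=0\}$ a bad point, count dimensions, and conclude that the locus of $F$ for which something goes wrong is a proper closed subset of $\Omega_S(d_1,d_2)$, hence avoided on a Zariski-open dense set.
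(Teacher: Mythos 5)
Your proposal is correct and is essentially the paper's own (largely implicit) argument: the paper presents the corollary as an immediate B\'ezout consequence of the determinantal description $C(F)=S\cap\{J(F)=0\}$, where the cofactor expansion gives $\deg J(F)=d_1+d_2-1$ and $\deg \overline{S}=2$, exactly your count. The issues you flag at infinity and the reducedness of the intersection are handled in the paper by the same devices you invoke --- the incidence-variety codimension counts of Lemma \ref{lemS1} and transversality to $\Sigma^1$ from Theorem \ref{trans} --- so your deferral to those arguments matches the paper's treatment.
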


Now we describe cusps of a generic mapping $F\colon S\to \C^2$. Note that a tangent line to $C(F)$ is given by two equations:
$$ zv_1+yv_2+xv_3=0, \ \ J(F)_xv_1+J(F)_yv_2+J(F)_zv_3=0.$$

The mapping $F$ has a cusp in a point $(x,y,z)$ if

(1) $(x,y,z)\in C(F)$

(2) the line given by the kernel of  $d_{(x,y,z)} F$ is tangent to $C(F).$

First let us determine the kernel of $d_{(x,y,z)} F$. If $\rank \left|\begin{matrix} z & y & x \\ f_x & f_y & f_z \end{matrix}\right| = 2$ then
the kernel is given by the vector
$$v(f)=\left(\left|\begin{matrix} y & x \\  f_y & f_z \end{matrix}\right|,-\left|\begin{matrix} z  & x \\ f_x & f_z \end{matrix}\right|,\left|\begin{matrix} z & y \\ f_x & f_y  \end{matrix}\right|\right).$$ Otherwise it is the vector $$v(g)=\left(\left|\begin{matrix} y & x \\  g_y & g_z \end{matrix}\right|,-\left|\begin{matrix} z  & x \\ g_x & g_z \end{matrix}\right|,\left|\begin{matrix} z & y \\ g_x & g_y  \end{matrix}\right|\right).$$

Let $J_{1,1}(F):=J(F)_x v_1(f)+J(F)_y v_2(f)+J(F)_z v_3(f)$ and $J_{1,2}(F):=J(F)_x v_1(g)+J(F)_y v_2(g)+J(F)_z v_3(g)$. Let $C$ denote the set of cusps of $F$, for generic $F$ we have from the construction:
$$ C=\{J(F)=J_{1,1}(F)=J_{1,2}(F)=0\}.$$

Furthermore, we will show in Lemma \ref{lemS1} that $S\cap \{J_{1,2}(F)=0\}\cap \{v(f)=0\}=\emptyset$ which gives
$$C=S\cap (\{J(F)=J_{1,1}(F)=0\}\setminus \{v(f)=0\}).$$

\begin{lem}\label{lemS1}
Let $L_\infty$ denote the plane at infinity of
$\C^3$. There is a non-empty open subset $V\subset \Omega_S(d_1,d_2)$ such
that for all $F=(f,g)\in V$:

\begin{enumerate}
\item $S\cap \{J_{1,2}(F)=0\}\cap \{v(f)=0\}=\emptyset$, $S\cap \{J_{1,1}(F)=0\}\cap \{v(g)=0\}=\emptyset$,
\item $\overline{S}\cap \overline{\{J(F)=0\}}\cap \overline{\{J_{1,1}(F)=0\}}\cap L_\infty=\emptyset$, $\overline{S}\cap \overline{\{J(F)=0\}}\cap \overline{\{J_{1,2}(F)=0\}}\cap L_\infty=\emptyset$,
\item $\overline{S} \cap \overline{\{J(F)=0\}}\pitchfork L_\infty$.
\end{enumerate}
\end{lem}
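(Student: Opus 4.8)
The plan is to handle the three assertions by the two mechanisms already used in this paper: jet transversality (Theorem~\ref{thgeneric}) for the finite statement (1), and an incidence-variety dimension count on the leading homogeneous parts (as in Lemma~\ref{infty}) for the statements (2)--(3) at infinity, with (3) deduced from (2). Throughout one may assume $d_1,d_2\ge 2$, the low-degree cases being trivial. For (1), note first that $v(f)$ is proportional to the cross product $(z,y,x)\times(f_x,f_y,f_z)$, so it depends only on $d_pf|_{T_pS}$ and vanishes exactly when $\nabla f$ is proportional to $\nabla\phi=(2z,2y,2x)$. Since $\phi=1$ on $S$ forces $(x,y,z)\neq(0,0,0)$, we have $\nabla\phi\neq 0$ on $S$, so $\{v(f)=0\}$ is of codimension $2$ in $J^2(S,\C^2)$. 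The simultaneous vanishing $v(f)=v(g)=0$ means $\rank(\nabla\phi,\nabla f,\nabla g)=1$, a $\Sigma^2$-type condition of codimension $4$ that a generic mapping avoids; hence on the locus $\{v(f)=0\}$ a generic $F$ has $v(g)\neq 0$. Adding $J_{1,2}(F)=\langle\nabla J(F),v(g)\rangle=0$ imposes one further independent condition, since it involves the second-order coefficients of $f$ entering $\nabla J(F)$, which are free once $v(f)=0$ is fixed. Thus
\[
R_1:=\{v(f)=0,\ J_{1,2}(F)=0,\ v(g)\neq 0\}\subset J^2(S,\C^2)
\]
is a locally closed smooth submanifold of codimension $3$. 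By Theorem~\ref{thgeneric} a generic $F$ satisfies $j^2(F)\pitchfork R_1$, and since $\codim R_1=3>2=\dim S$ this forces $j^2(F)(S)\cap R_1=\emptyset$, the first assertion of (1); swapping $f$ and $g$ gives the second.

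For (2) the intersections at infinity depend only on the leading homogeneous parts $(f_{d_1},g_{d_2})\in H_{d_1,d_2}$, and $\overline S\cap L_\infty$ is the smooth conic $\mathcal C=\{y^2+2xz=0\}\subset L_\infty\cong\mathbb{P}^2$, which is isomorphic to $\mathbb{P}^1$. I would form the incidence variety
\[
Z=\{(p,F)\in\mathcal C\times H_{d_1,d_2}:\ J(F)(p)=J_{1,1}(F)(p)=0\}
\]
and project to $\mathcal C$. The orthogonal group $O(\phi)$ preserves $S$, acts on $H_{d_1,d_2}$ by $F\mapsto F\circ A$, transforms $J(F)$ and $J_{1,1}(F)$ by nonzero scalar factors, and by Witt's theorem acts transitively on the isotropic conic $\mathcal C$. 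Hence all fibers $Z_p$ are isomorphic and it suffices to compute the fiber over one point, say $p_0=[1:0:0:0]$. There one checks that $J(F)(p_0)=0$ and $J_{1,1}(F)(p_0)=0$ are two independent conditions on the coefficients: $J(F)$ is built from first derivatives only, while $J_{1,1}(F)$ contains a pure second-derivative term whose coefficient does not occur in $J(F)$, so a suitable $2\times2$ Jacobian minor in two coefficients (one of $f$, one a pure second-order coefficient of $g$) is a nonzero monomial. Thus $\codim_{H_{d_1,d_2}}Z_{p_0}=2$, giving $\dim Z=1+\dim H_{d_1,d_2}-2<\dim H_{d_1,d_2}$; the projection $Z\to H_{d_1,d_2}$ is therefore not dominant and a generic $F$ lies off its image, which is the first part of (2). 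Swapping $f$ and $g$ yields the statement with $J_{1,2}$.

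Statement (3) follows from (2), in the spirit of the deduction of part (2) from part (1) in Lemma~\ref{infty}. The affine tangent direction to the critical curve $\overline{C(F)}=\overline S\cap\overline{\{J(F)=0\}}$ is spanned by $\nabla\phi\times\nabla J(F)$ (the two equations of the tangent line given before the lemma), computed on the leading parts at an infinite point $P$. Reducing $P$ to $p_0$ by the $O(\phi)$-symmetry and using the Euler relation (which makes $J(F)_x(p_0)$ vanish automatically), one sees that $\overline{C(F)}$ fails to be transversal to $L_\infty$ at $P$ exactly when this tangent direction degenerates, i.e.\ when $\nabla J(F)(P)$ is proportional to $\nabla\phi(P)$ (possibly zero). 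But then, since $v(f)=\nabla\phi\times\nabla f$ is orthogonal to $\nabla\phi$, we get $J_{1,1}(F)(P)=\langle\nabla J(F)(P),v(f)(P)\rangle=0$; together with $J(F)(P)=0$ this places $P$ in $\overline S\cap\overline{\{J(F)=0\}}\cap\overline{\{J_{1,1}(F)=0\}}\cap L_\infty$, shown empty in (2) -- a contradiction. Hence $\overline S\cap\overline{\{J(F)=0\}}\pitchfork L_\infty$.

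The main obstacle is the independence computation at $p_0$ in (2): the determinantal expressions for $J(F)$ and $J_{1,1}(F)$ on the sphere are considerably bulkier than the planar $f_xg_y-f_yg_x$, so the real work lies in singling out the correct pair of coefficients and verifying that the associated $2\times2$ minor is a nonzero monomial in the remaining free parameters. The smoothness and codimension-$3$ verification for $R_1$ in (1) is of the same nature but lighter.
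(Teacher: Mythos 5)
Your proposal is correct and follows essentially the same route as the paper: part (1) by a codimension-count/jet-transversality argument (the paper packages this by passing to the charts $U_x,U_y,U_z$ and quoting Lemma~\ref{lemtrans2}, which is the same computation you do intrinsically), part (2) by an incidence variety over the conic at infinity, reduced to a single fiber via the transitive linear action on the conic and settled by the Jacobian minor in one first-order and one pure second-order coefficient, and part (3) deduced from (2) by observing that failure of transversality at infinity forces $J_{1,1}$ to vanish there. The only points where the paper does slightly more are the separate treatment of the degenerate locus $\{a_{0,0}b_{0,0}=0\}$ and of the case $d_2=1<d_1$ (which is not trivial but is handled by the symmetric minor computation); these are routine and do not affect the soundness of your argument.
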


\begin{proof}
(1) The assertion can be proved locally. Consider the open set $U_z=\{ p\in S: z\not=0\}$ (and similarly open sets $U_x,U_y$). In $U_z$ we have globally defined local coordinates $x,y$. Now the proof reduces to Lemma \ref{lemtrans2}.

(2)  Similarly as in Lemma \ref{infty} we will show that there is an open subset $V\subset H_{d_1,d_2}:=H_{d_1}\times H_{d_2}$ such that $\overline{S}\cap \overline{\{J(F)=0\}}\cap \overline{\{J_{1,1}(F)=0\}}\cap L_\infty=\emptyset$ for all $F=(f,g)\in V$.
Let $\phi(x,y,z)=y^2+2xz$ and $\Gamma:=\{ (x,y,z)\in \Bbb P^2: \phi(x,y,z)=0\}$. Obviously $\Gamma\cong \Bbb P^1$.

Consider the set $X=\left\{(p,F)\in\Gamma\times H_{d_1,d_2}\ :\ \phi(p)=J(F)(p)=J_{1,1}(F)(p)=0\right\}$.
If $\overline{\{\phi=0\}}\cap \overline{\{J(F)=0\}}\cap \overline{\{J_{1,1}(F)=0\}}\cap L_\infty\neq\emptyset$ then
$F$ belongs to the image of the projection of $X$ on
$H_{d_1,d_2}$. So to prove (1) it is sufficient to show that $X$
has dimension strictly smaller than the dimension of $H_{d_1,d_2}$.

Let $q=(1:0:0)\in \mathbb{P}^2$, $Y:=\{q\}\times
H_{d_1,d_2}$ and $X_0=X\cap Y$. Note that all fibers of the projection
$X\rightarrow\Gamma$ are isomorphic to $X_0$, because the group $GL(S)$ of linear transformations of $S$ acts transitively on the conic at infinity of $S$. Thus $\dim(X)=\dim(X_0)+\dim(\Gamma)$ and to prove (1) it is sufficient to
show that $X_0$ has codimension at least $2$ in $Y$.

Let $r=(q,F)\in Y$ and let $a_{i,j}$ and $b_{i,j}$ be the parameters in $H_{d_1,d_2}$ giving
respectively the coefficients of $f$ at $x^{d_1-i-j}y^iz^j$ and
of $g$ at $x^{d_2-i-j}y^iz^j$. For $0\leq i+j+k\leq d_1$, we have
$\frac{\partial^{i+j+k}f}{\partial
x^iy^jz^k}(q)=\frac{(d_1-j-k)!j!k!}{(d_1-i-j-k)!}a_{j,k}(F)$ and similarly
for $g$ and $b_{j,k}$.

To conclude the proof of (1) we will show that the codimension of $\{a_{1,0}b_{1,0}=0\}\cap X_0$ in $Y$ is at least $2$ and $\nabla J$ and $\nabla J_{1,1}$ are linearly independent outside $\{a_{1,0}b_{1,0}=0\}\cap X_0$ and thus the variety $X_0$ has
codimension $2$ in $Y$.

Let us calculate $J(r)$. We have $J(r)=(f_xg_y-f_yg_x)(q,F)=(d_1a_{0,0}b_{1,0}-d_2a_{1,0}b_{0,0})(F)$.
Thus $\{a_{0,0}=0\}\cap X_0\subset \{a_{0,0}=a_{1,0}b_{0,0}=0\}\cap Y$ has codimension at least $2$ and in further calculations we may assume that $a_{0,0}(F)\neq 0$ and similarly $b_{0,0}(F)\neq 0$.

Let us assume that $d_2>1$. We have $\pa{J}{b_{1,0}}(r)=\pa{(d_1a_{0,0}b_{1,0}-d_2a_{1,0}b_{0,0})}{b_{1,0}}(F)=d_1a_{0,0}(F)$ and $\frac{\partial J(r)}{\partial b_{2,0}}=0$.
Now let us calculate $\pa{J_{1,1}}{b_{2,0}}(r)$. The
coefficient $b_{2,0}$ can only be obtained from $\frac{\partial^2
g}{\partial y^2}$, which is present in $J_{1,1}$ in the
summand $\frac{\partial^2 g}{\partial
y^2}\left|\begin{matrix} z & x  \\ f_x & f_z  \end{matrix}\right|^2.$ Thus
$\pa{J_{1,1}}{b_{2,0}}(p)=\pa{(2b_{2,0}d_1^2a_{0,0}^2)}{b_{2,0}}(F)=2d_1^2a_{0,0}(F)^2$.
So $\det\pa{(J,J_{1,1})}{(b_{1,0},b_{2,0})}(p)=2d_1^3(a_{0,0}(F))^3\neq 0$.

Similarly, if $d_2=1$ and $d_1>1$ then $\det\pa{(J,J_{1,1})}{(a_{0,1},a_{0,2})}(p)=2d_2^3(b_{1,0}(F))^3\neq 0$.

(3) Note that $\overline{\big\{\nabla J(F)|_S=0\big\}}\subset \{\overline{J_{1,1}(F)=0\}}$, hence (2) implies (3).
\end{proof}

\begin{lem}\label{lemS2}
There is a non-empty open subset $V_1\subset \Omega_S(d_1,d_2)$
such that for all $(f,g)\in V_1$ the curve $S\cap J(f,g)$ is transversal
to the curve $S\cap J_{1,1}(f,g)$.
\end{lem}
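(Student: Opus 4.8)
The plan is to follow the same strategy used in Lemma~\ref{lemtrans3}, adapting it from the planar case to the sphere. The statement to prove is that for a generic $(f,g)\in\Omega_S(d_1,d_2)$ the curves $S\cap J(f,g)$ and $S\cap J_{1,1}(f,g)$ meet transversally at every common point. First I would restrict to a generic open subset $V_1\subset\Omega_S(d_1,d_2)$ consisting of mappings satisfying the conclusions of all previous lemmas, in particular Lemma~\ref{lemS1}. Fix a common point $a\in S\cap J(f,g)\cap J_{1,1}(f,g)$ and work in the local coordinates available on one of the charts $U_x,U_y,U_z$ of $S$ (the conic $\phi=1$ is smooth, so such charts cover $S$). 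On a given chart the two curves are cut out by the restrictions of $J(F)$ and $J_{1,1}(F)$, and transversality at $a$ means the two differentials are independent, equivalently that the local ideal they generate equals the maximal ideal $\mathfrak m_a$.

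The natural dichotomy is whether the vector $v(f)$ vanishes at $a$ or not. If $v(f)(a)\neq 0$, then the kernel of $d_aF$ is spanned by $v(f)$ and $J_{1,1}$ is the genuine tangency condition along $C(F)$; here transversality follows from the transversality of $j^2(F)$ to the Thom--Boardman strata $\Sigma^1$ and $\Sigma^{1,1}$ on the sphere, exactly as in the case $\nabla_a f\neq 0$ of Lemma~\ref{lemtrans3}. This reduces the problem to the degenerate locus $\{v(f)=0\}$. By part (1) of Lemma~\ref{lemS1} we have $S\cap\{J_{1,2}(F)=0\}\cap\{v(f)=0\}=\emptyset$, so at a point $a$ with $v(f)(a)=0$ we know $v(g)(a)\neq 0$ and, crucially, $J_{1,2}(F)(a)\neq 0$. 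The task is then to show that in $\mathcal O_a$ one has the equality
$$
\bigl(J(F),\ J_{1,1}(F)\bigr)=\mathfrak m_a,
$$
using $J_{1,2}(F)(a)\neq 0$ as the nonvanishing factor. This is the precise analogue of the final ideal manipulation in Lemma~\ref{lemtrans3}, where $J(f,g)$ played the role of $L$ and $[L_xg_y-L_yg_x](a)\neq 0$ (supplied there by Lemma~\ref{lemtrans2}) played the role that $J_{1,2}(F)(a)\neq 0$ plays here.

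Concretely, I would set $L=J(F)$ and rewrite $J_{1,1}(F)$ as a combination $L_x\,v_1(f)+L_y\,v_2(f)+L_z\,v_3(f)$ modulo the ideal, then multiply by the nonvanishing components of $v(g)$ and use the two-by-two Plücker-type identities relating the minors $v_i(f)$ and $v_i(g)$ to extract the factor $J_{1,2}(F)$. Since $J_{1,2}(F)(a)\neq 0$, that factor is a unit in $\mathcal O_a$, and the manipulation collapses the ideal $(J(F),J_{1,1}(F))$ onto the ideal generated by the vanishing minors cutting out $\{v(f)=0\}$, which at such a point is precisely $\mathfrak m_a$ (the two independent minors among the $v_i(f)$ generate the maximal ideal because $\{v(f)=0\}$ is a reduced point of the critical curve). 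The main obstacle will be bookkeeping of these determinantal identities on the sphere: unlike the plane, $J(F)$ is a $3\times 3$ determinant and the kernel vectors $v(f),v(g)$ are triples of $2\times 2$ minors, so the algebraic reduction is heavier and one must be careful to work in the correct chart where the relevant coordinate ($z$, $y$, or $x$) is nonzero so that the two chosen minors really do form a local system of parameters. Everything else is a direct transcription of the planar argument.
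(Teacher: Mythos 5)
Your strategy is the paper's own: the published proof is a one-line reduction to the charts $U_x,U_y,U_z$, where $S$ carries two global coordinates and the statement becomes Lemmas \ref{lemtrans1} and \ref{lemtrans3}; your dichotomy on $v(f)(a)$, the use of Lemma \ref{lemS1}(1) to get $J_{1,2}(F)(a)\neq 0$ and $v(g)(a)\neq 0$, and the ideal manipulation are exactly the transcription of Lemma \ref{lemtrans3} that this reduction carries out. The determinantal bookkeeping you are worried about also works out cleanly: since $v(f)=(z,y,x)\times\nabla f$ and $v(g)=(z,y,x)\times\nabla g$, one has the exact identities $v(f)\times v(g)=J(F)\cdot(z,y,x)$ and $J(F)=\sum_i v_i(f)\,g_{x_i}$, so multiplying $J_{1,1}(F)=\sum_i J(F)_{x_i}v_i(f)$ by a unit component of $v(g)$ and using the unit $J_{1,2}(F)(a)$ collapses $\bigl(J(F),J_{1,1}(F)\bigr)$ onto $\bigl(v_1(f),v_2(f),v_3(f)\bigr)$ in $\mathcal O_{S,a}$, as you claim.

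The gap is your very last step. You conclude $\bigl(v_1(f),v_2(f),v_3(f)\bigr)=\mathfrak{m}_a$ \emph{``because $\{v(f)=0\}$ is a reduced point of the critical curve''}, and that is not a justification: reducedness or smoothness of $C(F)$ at $a$ is a condition on the pair $(f,g)$, whereas $S\cap\{v(f)=0\}$ is the critical scheme of $f|_S$ alone; it can be non-reduced at $a$ (a degenerate critical point of $f|_S$) while $C(F)$ is perfectly smooth there. What you actually need is the sphere analogue of Lemma \ref{infty0}(1): for generic $f$, the critical points of $f|_S$ are nondegenerate, i.e.\ $S\cap\{v(f)=0\}$ is a finite set of reduced points. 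In the planar proof of Lemma \ref{lemtrans3} this is precisely the sentence ``we can also assume that the curves $\{f_x=0\}$ and $\{f_y=0\}$ intersect transversally,'' secured beforehand by Lemma \ref{infty0}; on the sphere no lemma available at this stage supplies it --- Lemma \ref{lemS1} does not, and Lemma \ref{infty6'}, which does concern $S\cap\{v(f)=0\}$, comes later and itself leaves the transversality claim to the reader. So you must add this genericity statement and prove it, e.g.\ by applying Theorem \ref{thgeneric} with $X=S$ to the codimension-two stratum where both first partials of $f$ in chart coordinates vanish --- which is in effect how the paper's chart reduction disposes of it. With that lemma added to your list of prerequisites, your argument closes; without it, the final equality is unsupported and its stated justification is false.
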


\begin{proof}
As in Lemma \ref{lemS1} (1) we consider the sets $U_x,U_y,U_z$ with globally defined local coordinates and reduce the proof to Lemmas \ref{lemtrans1} and \ref{lemtrans3}.
\end{proof}

\begin{lem}\label{infty5'}
There is a non-empty open subset $V_2\subset H_{d_1}$
such that for all $f\in V_2$ the equations:
\begin{enumerate}
\item $\phi(x,y,z)=0,$
\item $v(f)=0$
\end{enumerate}
\noindent have no common solutions different from $(0,0,0)$.
\end{lem}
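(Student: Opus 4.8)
The plan is to projectivize and run an incidence--variety dimension count entirely analogous to the proof of Lemma~\ref{lemS1}(2). Since $\phi$ is homogeneous of degree $2$ and each component of $v(f)=(z,y,x)\times\nabla f$ is homogeneous of degree $d_1$ in $(x,y,z)$, a nonzero common solution of (1) and (2) is the same thing as a point of $\mathbb{P}^2$ lying on the conic $\Gamma=\{\phi=0\}\cong\mathbb{P}^1$ at which $v(f)$ vanishes. So it suffices to produce a nonempty Zariski open $V_2\subset H_{d_1}$ for which $\Gamma\cap\{v(f)=0\}=\emptyset$ in $\mathbb{P}^2$. I would therefore introduce the incidence variety
$$X=\{(p,f)\in\Gamma\times H_{d_1}\ :\ v(f)(p)=0\},$$
which is closed (the condition $v(f)(p)=0$ is algebraic and well defined on $\Gamma$ because $v(f)$ is homogeneous in $p$), and show that $\dim X<\dim H_{d_1}$.

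The heart of the argument is a fiberwise computation for the projection $X\to\Gamma$. First I would record the algebraic identity $z\,v_1(f)+y\,v_2(f)+x\,v_3(f)=0$ (the standard relation $u\cdot(u\times w)=0$ for $u=(z,y,x)$), so that for a fixed $p=(x_0:y_0:z_0)\in\Gamma$ the value $v(f)(p)$ always lies in the two--dimensional subspace $\{w:z_0w_1+y_0w_2+x_0w_3=0\}$. Next I would check that the linear map $f\mapsto\nabla f(p)$ is surjective onto $\C^3$: if, say, $x_0\neq0$, then the gradients at $p$ of the three monomials $x^{d_1},x^{d_1-1}y,x^{d_1-1}z$ span $\C^3$ (their matrix has determinant $d_1x_0^{3(d_1-1)}\neq0$), and the cases $y_0\neq0$ or $z_0\neq0$ are symmetric. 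Composing with $w\mapsto(z_0,y_0,x_0)\times w$, whose image is exactly the above two--dimensional subspace, shows that $f\mapsto v(f)(p)$ is onto a two--dimensional space, so its kernel has codimension exactly $2$ in $H_{d_1}$. Hence every fiber of $X\to\Gamma$ has codimension $2$ and $\dim X=\dim\Gamma+(\dim H_{d_1}-2)=\dim H_{d_1}-1$. Alternatively, exactly as in Lemma~\ref{lemS1}, one may use that $GL(S)$ acts transitively on $\Gamma$ to reduce to the single point $q=(1:0:0)$, where $v(f)(q)=(-f_y(q),f_x(q),0)$ and the vanishing $v(f)(q)=0$ becomes the two manifestly independent equations $f_x(q)=0$, $f_y(q)=0$, singling out the coefficients of $x^{d_1}$ and of $x^{d_1-1}y$ respectively.

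Finally, since $\Gamma$ is projective the projection $\Gamma\times H_{d_1}\to H_{d_1}$ is proper, so the image of the closed set $X$ is a closed subset of $H_{d_1}$ of dimension at most $\dim X<\dim H_{d_1}$. Its complement is the desired nonempty Zariski open set $V_2$: for $f\in V_2$ the fiber of $X$ over $f$ is empty, i.e. there is no $p\in\Gamma$ with $v(f)(p)=0$, which is precisely the assertion that (1) and (2) have no common solution different from $(0,0,0)$.

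I expect the only delicate point to be the uniform codimension bound in the second paragraph: one must use the built--in relation among $v_1,v_2,v_3$ to count the conditions correctly (three equations, but only two independent ones), and must verify the surjectivity of $f\mapsto\nabla f(p)$ at \emph{every} point of $\Gamma$, including those where one of the coordinates vanishes. Everything else is the same proper--projection and constructibility packaging already used in Lemma~\ref{infty0} and Lemma~\ref{lemS1}.
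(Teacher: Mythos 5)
Your proposal is correct, and it follows the same overall strategy as the paper: projectivize, form the incidence variety $X\subset\Gamma\times H_{d_1}$, show $\dim X<\dim H_{d_1}$, and conclude by projecting to $H_{d_1}$. The only real difference is in how the codimension-$2$ bound on the fibers of $X\to\Gamma$ is obtained. The paper uses the transitive action of the linear automorphisms of $S$ on the conic $\Gamma$ to declare all fibers isomorphic to the one over $q=(1:0:0)$, where the two equations $d_1a_{0,0}=0$, $a_{1,0}=0$ are read off directly (this is exactly your ``alternative'' route). Your primary argument instead works uniformly at every point $p\in\Gamma$: you exploit the structure $v(f)=(z,y,x)\times\nabla f$, the syzygy $zv_1+yv_2+xv_3=0$, the surjectivity of $f\mapsto\nabla f(p)$, and the fact that $w\mapsto(z_0,y_0,x_0)\times w$ surjects onto the $2$-dimensional space $\{w: z_0w_1+y_0w_2+x_0w_3=0\}$, so that $f\mapsto v(f)(p)$ has kernel of codimension exactly $2$ for \emph{every} $p$. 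This buys you independence from the group-action reduction (whose verification the paper leaves implicit) and makes transparent why three equations impose only two conditions; the paper's version buys brevity, since a single explicit fiber computation suffices. Both arguments are sound, and your properness-of-projection packaging at the end is a clean (indeed slightly stronger) substitute for the paper's constructibility argument, since it shows the bad locus is actually closed.
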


\begin{proof}
We proceed similarly as in Lemma \ref{lemS1} (2).

Let $\Gamma:=\{ (x,y,z)\in \Bbb P^2: \phi(x,y,z)=0\}\cong \Bbb P^1$. Consider the set $$X=\left\{(p,f)\in\Gamma\times H_{d_1}\ :\ \phi(p)=v_1(f)(p)=v_2(f)(p)=v_3(f)(p)=0\right\}.$$
If ${\{\phi=0\}}\cap \{ v(f)=0\}\neq\emptyset$ then
$f$ belongs to the image of the projection of $X$ on
$H_{d_1}$. So to prove (1) it is sufficient to show that $X$
has dimension strictly smaller than the dimension of
$H_{d_1}$.

Let $q=(1:0:0)\in \mathbb{P}^2$, $Y:=\{q\}\times
H_{d_1}$ and $X_0=X\cap Y$. As before, all fibers of the projection
$X\rightarrow\Gamma$ are isomorphic to $X_0$, so
$\dim(X)=\dim(X_0)+\dim(\Gamma)$ and it is sufficient to
show that $X_0$ has codimension at least $2$ in $Y$.

But $X_0$ is given by two equations: $-a_{(1,0)}=0, d_1a_{(0,0)}=0$, so $\codim X_0=2$.
\end{proof}

\begin{lem}\label{infty6'}
There is a non-empty open subset $V_3\subset \Omega_S(d_1,d_2)$
such that for all $(f,g)\in V_3$ the equations:
\begin{enumerate}
\item $y^2+2xz=1,$
\item $v(f)=0$
\end{enumerate}
\noindent have exactly $2(d_1^2-d_1+1)$ common solutions.
\end{lem}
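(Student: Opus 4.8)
The plan is to recognise the condition $v(f)=0$ as a rank-$\le 1$ determinantal locus, to compute its degree, and then to intersect it with $S$ by B\'ezout, using Lemma \ref{infty5'} to discard the contribution at infinity.

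First I would set $C:=\{v(f)=0\}\subset\C^3$ and note that $v(f)$ is exactly the vector of $2\times 2$ minors of
$$M=\begin{pmatrix} z & y & x \\ f_x & f_y & f_z\end{pmatrix},$$
so that $C$ is the locus where $M$ has rank $\le 1$, i.e.\ where $\nabla f$ is proportional to $(z,y,x)=\tfrac12\nabla\phi$; thus $C\cap S$ is precisely the set of critical points of $f|_S$ that we must count. For generic $f$ the locus $C$ has the expected codimension $2$, hence is a curve, and I would pass to its projective closure $\tilde{C}\subset\mathbb{P}^3$ with coordinates $[x:y:z:w]$ (affine chart $w=1$), writing $Q=\overline{S}=\{y^2+2xz-w^2=0\}$ for the smooth quadric closure of $S$, whose section by the plane at infinity is the conic $\Gamma=Q\cap L_\infty$ contained in $\{\phi=0\}$.

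The count then rests on two computations. For the degree of $\tilde{C}$ I would exploit the syzygy $zv_1+yv_2+xv_3=0$: the complete intersection $\{v_1=v_2=0\}$ has degree $d_1^2$, and on it the syzygy forces $xv_3=0$, so (away from common components) it decomposes as $C\cup\{v_1=v_2=x=0\}$, where the residual piece lies in the plane $\{x=0\}$ and, since there $v_1=yf_z$ and $v_2=-zf_z$, has one-dimensional part $\{x=0,\ f_z=0\}$ of degree $d_1-1$. Hence $\deg\tilde{C}=d_1^2-(d_1-1)=d_1^2-d_1+1$ (the same value is produced by the Thom--Porteous formula for the bundle map $\mathcal{O}^{\oplus 3}\to\mathcal{O}(1)\oplus\mathcal{O}(d_1-1)$ on $\mathbb{P}^3$). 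For the intersection with $S$ I would invoke Lemma \ref{infty5'}: the points of $\tilde{C}$ on $L_\infty$ lie in the rank-one locus $\{v(f_{d_1})=0\}$ of the leading homogeneous part $f_{d_1}$ of $f$, which by that lemma is disjoint from $\Gamma\subset\{\phi=0\}$; therefore $\tilde{C}\cap Q$ is contained in the affine chart. B\'ezout then gives $\#(C\cap S)=\tilde{C}\cdot Q=\deg\tilde{C}\cdot\deg Q=2(d_1^2-d_1+1)$.

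The genericity hypothesis supplies the remaining input, and this is where the care is needed: for $f$ in a suitable Zariski-open set (cut out by the conditions of Theorem \ref{thgeneric} together with Lemma \ref{infty5'}) one must know that $C$ is a reduced curve of the expected degree with no spurious components, that the complete intersection $\{v_1=v_2=0\}$ splits cleanly as above with multiplicity one along each piece, and that $C\cap S$ is transverse, so that the B\'ezout number is attained by exactly $2(d_1^2-d_1+1)$ \emph{distinct} points rather than merely with multiplicity. Establishing this reducedness and transversality is the main obstacle; once it is in place, the degree bookkeeping above yields the stated count.
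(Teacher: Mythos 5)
Your proposal follows essentially the same route as the paper's own proof: the identical syzygy-based decomposition of the complete intersection $\{v_1=v_2=0\}$ (of degree $d_1^2$) into $\{v(f)=0\}$ and the residual curve $\{x=f_z=0\}$ of degree $d_1-1$, giving $\deg\{v(f)=0\}=d_1^2-d_1+1$, followed by B\'ezout against the degree-$2$ quadric to get $2(d_1^2-d_1+1)$ points. The paper's version is in fact terser---it leaves transversality and the absence of points at infinity to the reader---whereas you correctly identify Lemma \ref{infty5'} as the ingredient ruling out intersections at infinity and flag the remaining genericity checks explicitly; this is the same argument, carried out with slightly more care.
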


\begin{proof}
We have $$v(f)=\left(\left|\begin{matrix} y & x \\  f_y & f_z \end{matrix}\right|,-\left|\begin{matrix} z  & x \\ f_x & f_z \end{matrix}\right|,\left|\begin{matrix} z & y \\ f_x & f_y  \end{matrix}\right|\right).$$
Note that generically the curve $\left\{\left|\begin{matrix} y & x \\  f_y & f_z \end{matrix}\right|=0\right\}\cap \left\{\left|\begin{matrix} z  & x \\ f_x & f_z \end{matrix}\right|=0\right\}$ decomposes into $\{v(f)=0\}$ and $\{ x=f_z=0 \}$. Thus by the Bezout Theorem $\deg\{v(f)=0\}=d_1^2-d_1+1$ and $S\cap\{v(f)=0\}$ has $2(d_1^2-d_1+1)$ points. We leave checking that the intersections are transversal and there are no components at infinity to the reader.
\end{proof}

Now we are in a position to prove:

\begin{theo}\label{thmcusps'}
There is a Zariski open, dense subset $U\subset \Omega_S(d_1,d_2)$
such that for every mapping $F=(f,g)\in U$ the mapping $F$ has only
folds and cusps as singularities and the number of cusps is
equal to
$$2(d_1^2+d_2^2+3d_1d_2-3d_1-3d_2+1).$$
Moreover the set $C(F)$ of critical
points of $F$ is a smooth connected curve, which is topologically
equivalent to a sphere with $(d_1+d_2-2)^2$
handles and $2(d_1+d_2-1)$ points removed.
\end{theo}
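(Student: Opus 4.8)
The plan is to mirror the proof of Theorem~\ref{thmcusps}, replacing intersection counts in $\C^2$ by Bezout computations on the quadric $\overline{S}\subset\Bbb P^3$. First, since $S$ is a smooth affine algebraic surface, Theorem~\ref{cor1} already supplies a non-empty Zariski open set on which every $F$ has only folds and cusps; I would take $U$ to be the intersection of that set with the open sets furnished by Lemmas~\ref{lemS1}, \ref{lemS2}, \ref{infty5'} and \ref{infty6'}, so that all the genericity properties hold at once. By the discussion preceding Lemma~\ref{lemS1} the cusp set is then
$$C=S\cap\big(\{J(F)=J_{1,1}(F)=0\}\setminus\{v(f)=0\}\big),$$
so counting cusps amounts to counting the points of $S\cap\{J(F)=0\}\cap\{J_{1,1}(F)=0\}$ and discarding those lying on $\{v(f)=0\}$.

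For the count I would first record the degrees $\deg J(F)=d_1+d_2-1$ and $\deg J_{1,1}(F)=2d_1+d_2-2$, read off directly from the determinantal formulas. By Lemma~\ref{lemS1}(2) this intersection meets no point of $L_\infty$, and by Lemma~\ref{lemS2} the curves $S\cap\{J(F)=0\}$ and $S\cap\{J_{1,1}(F)=0\}$ intersect transversally; hence every intersection point is simple and Bezout on the quadric yields exactly $2(d_1+d_2-1)(2d_1+d_2-2)$ of them. Next I would observe that $v(f)=0$ forces the first two rows of $J(F)$ to be proportional, so $\{v(f)=0\}\cap S\subset\{J(F)=0\}$, and since $J_{1,1}(F)=\nabla J(F)\cdot v(f)$ it also lies in $\{J_{1,1}(F)=0\}$. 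By Lemma~\ref{infty6'} there are exactly $2(d_1^2-d_1+1)$ such points, each simple by the same transversality. Subtracting, the number of cusps equals
$$2(d_1+d_2-1)(2d_1+d_2-2)-2(d_1^2-d_1+1)=2(d_1^2+d_2^2+3d_1d_2-3d_1-3d_2+1).$$

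For the topology I would argue that $\overline{C(F)}=\overline{S}\cap\{J(F)=0\}$ is a smooth complete intersection curve in $\Bbb P^3$: smoothness in the affine part follows from one-genericity (equivalently, from the fact that all singularities are folds and cusps, at which $C(F)$ is smooth), and smoothness along $L_\infty$ from Lemma~\ref{lemS1}(3). As a positive-dimensional complete intersection it is connected, and the genus formula for a complete intersection of surfaces of degrees $2$ and $d_1+d_2-1$ gives
$$g=(d_1+d_2-1)(d_1+d_2-3)+1=(d_1+d_2-2)^2.$$
By Bezout together with the transversality of Lemma~\ref{lemS1}(3), $\overline{C(F)}$ meets $L_\infty$ in exactly $2(d_1+d_2-1)$ simple points. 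Hence $C(F)$ is a sphere with $(d_1+d_2-2)^2$ handles and $2(d_1+d_2-1)$ punctures, as claimed.

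The main obstacle I anticipate is justifying that \emph{every} point of $S\cap\{J(F)=0\}\cap\{J_{1,1}(F)=0\}$ is simple, including the degenerate points where $v(f)=0$: at such points both defining functions vanish for structural reasons, so the transversality of the two curves is not automatic and must be drawn from Lemma~\ref{lemS2} (which in turn rests on the sphere analogues of Lemmas~\ref{lemtrans1} and \ref{lemtrans3}). Once that transversality is secured, the Bezout total and the subtraction of the $\{v(f)=0\}$ points are routine, and the genus computation is a standard complete-intersection calculation.
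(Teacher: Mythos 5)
Your proposal is correct. The cusp count is exactly the paper's argument: the Bezout total $2(d_1+d_2-1)(2d_1+d_2-2)$ (finite and disjoint from $L_\infty$ by Lemma \ref{lemS1}(2), all intersection points simple by Lemma \ref{lemS2}), minus the $2(d_1^2-d_1+1)$ points of $S\cap\{v(f)=0\}$ counted in Lemma \ref{infty6'}; your identification of the delicate point --- simplicity at the points where $v(f)=0$, which rests via Lemma \ref{lemS2} on the sphere analogues of Lemmas \ref{lemtrans1} and \ref{lemtrans3} --- is precisely where the paper also places the burden. Where you genuinely differ is the topology of $C(F)$. The paper never invokes the complete-intersection structure of $\overline{C(F)}$ in $\Bbb P^3$: it works on the quadric, writes $\overline{C(F)}\sim aL_1+bL_2$ in $\Pic(\overline{S})=\Z L_1\oplus\Z L_2$, gets $a+b=2(d_1+d_2-1)$ from the degree and $a=b$ from the fact that $C(F)$ is cut out on the affine sphere by the single polynomial $J(F)$ (so its class is trivial in $\Pic(S)$, where $l_1+l_2=0$), deduces connectedness because two nonempty effective summands would satisfy $\Gamma_1\cdot\Gamma_2=a_1b_2+a_2b_1>0$ and hence would meet, contradicting smoothness, and finally computes the genus by adjunction with $K_{\overline{S}}=-2(L_1+L_2)$. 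Your route --- connectedness of a positive-dimensional complete intersection plus the genus formula for a $(2,\,d_1+d_2-1)$ complete intersection --- is shorter, and your connectedness step does not even require smoothness. Its one extra obligation, which you should make explicit, is that $\overline{C(F)}$ really equals $\overline{S}\cap\{\tilde{J}(F)=0\}$ as schemes, with no components and no nonreduced structure at infinity: this holds because the complete intersection is purely one-dimensional and Cohen--Macaulay, meets $L_\infty$ in finitely many points by Lemma \ref{lemS1}, and is smooth along its affine part, hence generically reduced and therefore reduced. The paper's Picard-class computation is in effect a way of sidestepping exactly this verification, at the cost of the bidegree argument.
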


\begin{proof}

Note that every point $a$
of the intersection of curves $J(f,g)$ and $J_{1,1}(f,g)$ with
$v(f)\not=0$ is a cusp. Moreover for a generic
mapping $F$ points with $v(f)=0$ are not cusps (Lemma
\ref{lemS1}). By Lemma \ref{infty6'} we have that in the set $S\cap\{ v(f)=0\}$ there are exactly
$2(d_1^2-d_1+1)$ points  and
that the number of cusps of a generic mapping is equal to
$$2[(d_1+d_2-1)(2d_1+d_2-2)-(d_1^2-d_1+1)]=2(d_1^2+d_2^2+3d_1d_2-3d_1-3d_2+1).$$

Moreover by Lemma \ref{lemS1} we have that $C(F)=S_1(F)$ is a
smooth affine curve which is transversal to the plane at infinity.
This means that $J:=\overline{C(F)}$ is also smooth at infinity,
hence it is a smooth projective curve of degree $2(d_1+d_2-1)$.
Note that $\Pic(\overline{S})=\Bbb Z L_1\oplus \Bbb Z L_2$, where
$L_1,L_2$ are suitable lines in $\overline{S}$ (for details see e.g. \cite{szaf},  Ex.2 p. 237). Moreover if $H$ is a plane section then $H\sim L_1+L_2$.
Hence in $\Pic(\overline{S})$ we have $\overline{C(F)}\sim aL_1+bL_2$ where $a+b=2(d_1+d_2-1)$.

Take $l_i=L_i\cap S$ and note that $\Pic(S)$ is generated freely by $l_1$ or $l_2$ with the relation $l_1+l_2=0$.
In particular $C(F)\sim (a-b)l_1$. But in $\Pic(S)$ we have $C(F)\sim (d_1+d_2-1)H=0$.
Thus $a=b=d_1+d_2-1$.

Suppose that $C(F)$ is not connected. Hence $\overline{C(F)}=\Gamma_1+\Gamma_2$.
We have $\Gamma_1 \sim a_1 L_1+b_1 L_2$ and $\Gamma_2\sim a_2 L_1+ b_2 L_2$, where $a_1,b_1,a_2,b_2 \ge 0$, $a_1+b_1>0$ and $a_2+b_2>0$.
Note that $a_1+a_2=b_1+b_2=d_1+d_2-1>0$ thus if $a_1b_2=0$ then $a_2b_1>0$. So $\Gamma_1.\Gamma_2=a_1b_2+a_2b_1>0$. Consequently
$\Gamma_1\cap \Gamma_2\neq \emptyset$ and $\overline{C(F)}$ is not smooth -- a contradiction. This implies that $C(F)$ is connected.

Let $H\subset \Bbb P^3$ be a hyperplane. The canonical divisor of $\overline{S}$ is
$-2H=-2(L_1+L_2)$. Hence $K_{J}=(J-2H)|_J=(d_1+d_2-3)(L_1+L_2)|_J$ and $\deg K_J=2(d_1+d_2-3)(d_1+d_2-1)$.
By Riemmann-Roch Theorem $J$ has genus $\deg K_J/2+1=(d_1+d_2-2)^2$.
This means in particular that
$\overline{C(F)}$ is homeomorphic to a sphere with
$(d_1+d_2-2)^2$ handles. Moreover, by the Bezout Theorem
it has precisely $2(d_1+d_2-1)$ points at infinity.

\end{proof}

\begin{re}
{\rm The curve $C(F)$ has $2(d_1+d_2-1)$ (smooth) points at infinity and in each of these points it is transversal to the plane at infinity.}
\end{re}

\section{The complex sphere: the discriminant}\label{secDF'}
Here we analyze the discriminant of a generic mapping from $\Omega_S(d_1,d_2)$. Similarly as for the plane Theorem \ref{trans} implies that for a generic $F$ the only singularities of $\Delta(F)$ are cusps and nodes. We showed in Theorem \ref{thmcusps'} that there are exactly $c(F)=2(d_1^2+d_2^2+3d_1d_2-3d_1-3d_2+1)$ cusps. Now we will compute the number $d(F)$ of nodes of $\Delta(F)$.
First we compute the degree of the discriminant:

\begin{lem}\label{lemdegdisc'}
Let $F=(f,g)\in \Omega_S(d_1,d_2)$ be a generic mapping. If $d_1\ge d_2$ then $\deg\Delta(F)= 2d_1(d_1+d_2-1)$.
\end{lem}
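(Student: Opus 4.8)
The plan is to follow the strategy of the proof of Lemma \ref{lemdegdisc}, replacing the target plane argument by an intersection computation inside $\C^3$ and its projective closure $\mathbb P^3$. Fix a generic line $L=\{ax+by+c=0\}\subset\C^2$ in the target, and set $\Pi:=\{af+bg+c=0\}\subset\C^3$, the preimage $F^{-1}(L)$. As for the plane, for a generic $F$ the discriminant $\Delta(F)$ has only cusps and nodes as singularities and $F|_{C(F)}$ is birational onto $\Delta(F)$ (this follows from transversality to the Thom--Boardman strata, exactly as in Lemma \ref{bir}). Hence a generic $L$ meets $\Delta(F)$ transversally in $\deg\Delta(F)$ distinct smooth points, and, writing $j\colon C(F)\to\Delta(F)$ for the map induced by $F$, each such point has a single preimage. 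This gives
$$\deg\Delta(F)=\#\big(L\cap\Delta(F)\big)=\#\,j^{-1}\big(L\cap\Delta(F)\big)=\#\big(C(F)\cap\Pi\big).$$

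I would then evaluate this intersection number by Bezout's theorem in $\mathbb P^3$. Since $d_1\ge d_2$ and $a,b$ are generic, the leading form of $af+bg$ has degree $d_1$, so $\deg\Pi=d_1$; and as computed above $\deg C(F)=2(d_1+d_2-1)$. Provided that the projective closures $\overline{C(F)}$ and $\overline{\Pi}$ meet properly, transversally, and only in the affine chart, Bezout yields
$$\#\big(C(F)\cap\Pi\big)=\deg\overline{C(F)}\cdot\deg\overline{\Pi}=2(d_1+d_2-1)\,d_1,$$
which is precisely the claimed value of $\deg\Delta(F)$.

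The key point, and the main obstacle, is to exclude intersections at infinity, that is, to show $\overline{C(F)}\cap\overline{\Pi}\cap L_\infty=\emptyset$ for generic $F$ and generic $a,b,c$. By Lemma \ref{lemS1}(3) the curve $\overline{C(F)}$ is transversal to the plane at infinity, so $\overline{C(F)}\cap L_\infty$ consists of exactly $2(d_1+d_2-1)$ distinct points, all lying on the conic $\Gamma=\overline{S}\cap L_\infty$ and depending only on the top-degree parts of $f$ and $g$. On the other hand $\overline{\Pi}\cap L_\infty$ is cut out in $L_\infty\cong\mathbb P^2$ by the leading form of $af+bg$ (namely $af_{d_1}$ when $d_1>d_2$, or $af_{d_1}+bg_{d_2}$ when $d_1=d_2$, where $f_{d_1},g_{d_2}$ denote the top-degree parts). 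A genericity argument analogous to Lemma \ref{lemS1}(2), based on a dimension count of the incidence variety in $\Gamma\times H_{d_1,d_2}$, shows that for generic $F$ none of the points of $\overline{C(F)}\cap L_\infty$ is a common zero of $f_{d_1}$ and $g_{d_2}$; consequently for generic $a,b$ the leading form of $af+bg$ does not vanish at any of these points, so the two projective curves share no point of $L_\infty$. Combined with the transversality of the affine intersection $C(F)\cap\Pi$ for a generic $L$ (a generic member of the linear system moving $\Pi$), this guarantees that all $2d_1(d_1+d_2-1)$ intersection points are distinct and affine, completing the computation.
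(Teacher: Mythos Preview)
Your approach is essentially the same as the paper's: the paper simply says the proof is analogous to that of Lemma~\ref{lemdegdisc}, and your argument---pull back a generic line, use birationality of $F|_{C(F)}$, then count via Bezout after excluding intersections at infinity---is exactly that analog, carried out in $\mathbb P^3$ with the curve $\overline{C(F)}$ and the hypersurface $\overline{\Pi}$.

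One slip to fix: in the last paragraph you argue that for generic $F$ no point of $\overline{C(F)}\cap L_\infty$ is a \emph{common} zero of $f_{d_1}$ and $g_{d_2}$, and then claim ``consequently for generic $a,b$ the leading form of $af+bg$ does not vanish at any of these points.'' That implication fails when $d_1>d_2$: the leading form is then $af_{d_1}$ alone, so a point $P$ with $f_{d_1}(P)=0$ but $g_{d_2}(P)\neq 0$ would still lie on $\overline{\Pi}\cap L_\infty$ for every $a,b$. What you actually need (and what your dimension-count sketch can equally well prove) is the stronger statement that $f_{d_1}$ itself does not vanish at any point of $\overline{C(F)}\cap\Gamma$; this is precisely what the paper establishes in Lemma~\ref{lemstyczne'} (where it shows $\overline{f}(P)\neq 0$). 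With that correction the argument is complete.
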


\begin{proof}
Since the proof is analogous to the proof of Lemma \ref{lemdegdisc} we skip it.
\end{proof}

The main result of this section will be based on the following:

\begin{theo}\label{theodeltaz'}
Let $F\in \Omega_S(d_1,d_2)$ be a generic mapping. Let $d_1\geq d_2$ and $d=\gcd(d_1,d_2)$. Denote by $\overline{\Delta}$ the projective closure
of the discriminant $\Delta$. Then $$\sum_{z\in (\overline{\Delta}\setminus \Delta)} \delta_z= 2d_1(d_1-d_2)(d_1+d_2-1)^2+(-2d_1+d_2+d)(d_1+d_2-1).$$
\end{theo}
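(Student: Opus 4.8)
The plan is to follow the proof of Theorem~\ref{theodeltaz} essentially verbatim, the only structural change being the number of branches of $\overline{\Delta}$ at infinity. As in the plane case I would first dispose of the equality case: for $d_1=d_2$ the asserted right-hand side vanishes (since then $d=d_1$, so $-2d_1+d_2+d=0$), and one checks exactly as before that $\overline{\Delta}$ meets the plane at infinity only in smooth points, whence all $\delta_z=0$. So assume $d_1>d_2$. Homogenizing $f,g$ with a fourth variable $w$ to obtain $\tilde f,\tilde g$ on $\overline S=\{y^2+2xz=w^2\}\subset\mathbb P^3$, and writing the extension of $F$ in the target chart $\{X\neq 0\}$, one gets
$$\overline{F}=\left(w^{d_1-d_2}\frac{\tilde g}{\tilde f},\ \frac{w^{d_1}}{\tilde f}\right),$$
so that every point of $\overline{C(F)}\cap L_\infty$ at which $\tilde f\neq 0$ is sent to the single point $Q=(1:0:0)$. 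By Theorem~\ref{thmcusps'} the curve $\overline{C(F)}$ is smooth and transversal to the plane at infinity in exactly $r=2(d_1+d_2-1)$ points $P_1,\ldots,P_r\in\Gamma$. Since $\overline{\Delta}$ is the image of $\overline{C(F)}$ under $\overline F$, this gives $\overline{\Delta}\setminus\Delta=\{Q\}$, and by Theorem~\ref{milnor2} we have $\sum_z\delta_z=\delta_Q=\sum_i\delta(V_i)+\sum_{i<j}V_i\cdot V_j$, where $V_i=\overline F(C_{P_i})$ are the $r$ branches of $\overline{\Delta}$ at $Q$.

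The heart of the argument, and the main obstacle, is the sphere analogue of Lemmas~\ref{lemstyczne} and~\ref{row}: for a generic $F$ one must show that at each $P_i$ one has $\tilde f(P_i)\neq 0$, $\tilde g(P_i)\neq 0$, that the logarithmic tangency coefficients $c,d$ of $\tilde f,\tilde g$ along the branch satisfy $cd\neq 0$ and $d_2c\neq d_1d$, and that distinct branches are in general position, namely $\tilde f(P_i)^{d_2}\tilde g(P_j)^{d_1}\neq \tilde f(P_j)^{d_2}\tilde g(P_i)^{d_1}$. I would prove these exactly in the style of Lemma~\ref{lemS1}: form the incidence variety $X\subset\Gamma\times\Omega_S(d_1,d_2)$ (respectively $X\subset\Gamma\times\Gamma\times\Omega_S$) cut out by $\tilde J(F)=0$ together with the relevant tangency equation (respectively the two Jacobian conditions plus the product equation of Lemma~\ref{row}), then use the transitive action of the group $GL(S)$ on the conic at infinity $\Gamma$ to reduce all fibres to one over a fixed base point $q=(1:0:0)\in\Gamma$, and finally verify by an explicit computation with the coefficients $a_{i,j},b_{i,j}$ that this fibre has the codimension ($2$, respectively $3$) forcing the offending locus to be a proper subvariety of $\Omega_S$. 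The conic $\Gamma$ simply replaces the line $L_\infty$ of the plane computation, and the linear algebra of the defining equations is entirely parallel; it is this dimension count, rather than any new geometry, where the work lies.

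Granting these genericity statements, the local computation is identical to the plane case and introduces no new idea. Because $\overline{C(F)}$ is transversal to $\Gamma$ on $\overline S$ and $\{w=0\}$ cuts $\overline S$ transversally along $\Gamma$, the function $w$ vanishes to order one along $\overline{C(F)}$ at $P_i$, hence restricts to a local uniformizer $t$; as $\tilde f(P_i),\tilde g(P_i)\neq 0$, the branch $V_i$ has the same normal form $\bigl(t^{d_1-d_2}\cdot\text{unit},\,t^{d_1}\cdot\text{unit}\bigr)$ as in Theorem~\ref{theodeltaz}, with Puiseux exponents $a_0=d_1-d_2$, $a_1=d_1$, $a_2=d_1+1$ (the condition $d_2c\neq d_1d$ guaranteeing that $a_2$ is exactly $d_1+1$). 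Theorem~\ref{milnor2} then yields $2\delta(V_i)=(d_1-1)(d_1-d_2-1)+(d-1)$, and the dynamical computation of the intersection number, fed by the general-position statement above, gives $V_i\cdot V_j=d_1(d_1-d_2)$ for $i\neq j$, word for word as before.

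It remains to sum. Using the same algebraic simplification as in Theorem~\ref{theodeltaz}, one has
$$\sum_i\delta(V_i)+\sum_{i<j}V_i\cdot V_j=\tfrac{1}{2}d_1(d_1-d_2)r^2+\tfrac{1}{2}(-2d_1+d_2+d)r,$$
and substituting $r=2(d_1+d_2-1)$ gives
$$2d_1(d_1-d_2)(d_1+d_2-1)^2+(-2d_1+d_2+d)(d_1+d_2-1),$$
which is the asserted value of $\sum_{z\in(\overline\Delta\setminus\Delta)}\delta_z$.
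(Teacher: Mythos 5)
Your proposal is correct and takes essentially the same approach as the paper: the paper's proof likewise reduces to the case $d_1>d_2$, establishes exactly the genericity statements you identify (its Lemmas \ref{lemstyczne'} and \ref{rowS}) via incidence varieties over the conic $\Gamma$ whose fibres are identified using the transitive action of linear automorphisms of $S$, and then repeats the Puiseux/delta-invariant computation and branch-intersection count of Theorem \ref{theodeltaz} over the $r=2(d_1+d_2-1)$ branches at the single point at infinity. Your summation and final algebra agree with the paper's.
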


\begin{proof}
Let $\tilde{f}(x,y,z,w)=w^{d_1}f\left(\frac{x}{w},\frac{y}{w}, \frac{z}{w}\right)$ and $\tilde{g}(x,y,z,w)=w^{d_2}g\left(\frac{x}{w},\frac{y}{w}, \frac{z}{w}\right)$ be the homogenizations of $f$ and $g$ and let $\overline{f}(x,y,w)=\tilde{f}(x,y,1,w)$ and $\overline{g}(x,y,w)=\tilde{g}(x,y,1,w)$, respectively. For a generic mapping the curves $C(F)$ and $\{f=0\}$ have no common points at infinity (see Lemma \ref{lemstyczne'}). Moreover since $F$ is generic, we have  $\{z=0\}\cap \overline{C(F)}=\emptyset$. Thus $F$ extends to a neighborhood of $\overline{C(F)}\cap L_\infty$ on which it is given by the formula
$$\overline{F}(x,y,w)= \left(w^{d_1-d_2}\frac{\overline{g}(x,y,w)}{\overline{f}(x,y,w)}, \frac{w^{d_1}}{\overline{f}(x,y,w)}\right).$$

Let $\Gamma=\overline{S}\cap L_\infty$. Let $\{P_1,\ldots,P_{2d_1+2d_2-2}\}=\overline{C(F)}\cap\Gamma$, fix a point $P=P_i$. The curve $\overline{C(F)}$ is transversal to the line at infinity so it has a local parametrization at $P$ of the form $\gamma(t):=(\sum_i a_it^i,\sum_i b_it^i,t)$. We have the following:

\begin{lem}\label{lemstyczne'}
If $F$ is a generic mapping then $\overline{f}(P)\neq 0$, $\overline{g}(P)\neq 0$ and
$$\overline{f}(\gamma(t))=\overline{f}(P)(1+ct+\ldots),\ \overline{g}(\gamma(t))=\overline{g}(P)(1+dt+\ldots),$$
where $cd\neq 0$ and $d_2c\neq d_1d$.
\end{lem}

\begin{proof}
Let  $\tilde{J}$ be the homogenization of $J$. Obviously $$\tilde{J}(F) = \left|\begin{matrix} z & y & x \\ \tilde{f}_x & \tilde{f}_y & \tilde{f}_z \\ \tilde{g}_x & \tilde{g}_y & \tilde{g}_z \end{matrix}\right|.$$

Now let $\overline{J}(x,y,w)=\tilde{J}(x,y,1,w)$ and $\psi(x,y,w)=2x+y^2-w^2=\tilde{\phi}(x,y,1,w)$, where $\tilde{\phi}$ is the homogenization of $\phi=y^2+2xz-1$. We have $\overline{J}(\gamma(t))=0$ and $\psi(\gamma(t))=0$. Moreover,
$\pa{\gamma(t)}{t}_{|t=0}=(a_1,b_1,1)$. Thus we have
$$\pa{\psi(P)}{x}a_1+\pa{\psi(P)}{y}b_1+\pa{\psi}{w}(P)=0,$$
$$\pa{\overline{J}(P)}{x}a_1+\pa{\overline{J}(P)}{y}b_1+\pa{\overline{J}(P)}{w}=0.$$

Consequently $a_1=\overline{a_1}\delta^{-1}$ and $b_1=\overline{b_1}\delta^{-1}$, where
$$\overline{a_1}=\pa{\psi(P)}{w}\pa{\overline{J}(P)}{y}-\pa{\psi(P)}{y}\pa{\overline{J}(P)}{w},\ \overline{b_1}=\pa{\psi(P)}{x}\pa{\overline{J}(P)}{w}-\pa{\psi(P)}{w}\pa{\overline{J}(P)}{x},$$ $$\delta=\pa{\psi(P)}{x}\pa{\overline{J}(P)}{y}-\pa{\psi(P)}{y}\pa{\overline{J}(P)}{x}.$$

Thus
$$\overline{f}(P)c\delta=\pa{\overline{f}(P)}{x}\overline{a_1}+\pa{\overline{f}(P)}{y}\overline{b_1}+\pa{\overline{f}(P)}{w}\delta.$$

Take
$$\tilde{a_1}=\pa{\tilde{\psi}(P)}{w}\pa{\tilde{J}(P)}{y}-\pa{\tilde{\psi}(P)}{y}\pa{\tilde{J}(P)}{w}, \ \tilde{b_1}=\pa{\tilde{\psi}(P)}{x}\pa{\tilde{J}(P)}{w}-\pa{\tilde{\psi}(P)}{w}\pa{\tilde{J}(P)}{x},$$ $$\tilde{\delta}=\pa{\tilde{\psi}(P)}{x}\pa{\tilde{J}(P)}{y}-\pa{\tilde{\psi}(P)}{y}\pa{\tilde{J}(P)}{x}.$$

Consider the set
$$X=\left\{(P,F)\in \Gamma\times\Omega_3(d_1,d_2):\ \tilde{J}(F)(p)=\pa{\tilde{f}(P)}{x}\tilde{a_1}+\pa{\tilde{f}(P)}{y}\tilde{b_1}+\pa{\tilde{f}(P)}{w}\tilde{\delta}=0
\right\}.$$

Note that if $\overline{f}(P)=0$ or $c=0$ then the fiber over $F$ of the projection from $X$ to $\Omega_3(d_1,d_2)$ is non-empty. Hence it suffices to prove that $X$ has codimension at least $2$.

Let $p=(0:0:1:0)$, and $q=(-a^2/2: a:1:0)\in \Gamma$. Let $\tilde{T}(x,y,z,w)=(x+ay-a^2z/2,y-az,z,w)$ and $T(x,y,z)=(x+ay-a^2z/2,y-az,z).$ Thus $T(S)=S$ and $\tilde{T}(q)=p$. As in Lemma \ref{lemstyczne} we can show that $(p,F)\mapsto(\tilde{T}^{-1}(p),F\circ T)$ is an isomorphism of $X_p:=X\cap(\{p\}\times\Omega_3(d_1,d_2))$ and $X\cap(\{q\}\times\Omega_3(d_1,d_2))$. So it is enough to show that $X_p$ has codimension $2$ in $Y_p:=\{p\}\times\Omega_3(d_1,d_2)$.

Let $a_{i,j,k}$ be the parameters in $\Omega_3(d_1,d_2)$ giving the coefficients of $\tilde{f}$ at $x^iy^jz^{d_1-i-j-k}w^k$ (i.e. of $f$ at $x^iy^jz^{d_1-i-j-k}$) and let $b_{i,j,k}$ describe the analogous coefficients of $\tilde{g}$.

The first equation of $X_p$ is $w_1:=d_2a_{0,1,0}b_{0,0,0}-d_1b_{0,1,0}a_{0,0,0}$. The second one is
$$w_2=a_{1,0,0}\tilde{a}_1+a_{0,1,0}\tilde{b}_1+a_{0,0,1}\tilde{\delta}=a_{0,1,0}\pa{\tilde{J}(p)}{w}+a_{0,0,1}\pa{\tilde{J}(p)}{y}=$$
$$a_{0,1,0}((d_2-1)b_{0,0,1}a_{0,1,0}+ d_2b_{0,0,0}a_{0,1,1}-d_1a_{0,0,0}b_{0,1,1}-(d_1-1)a_{0,0,1}b_{0,1,0})+$$ 
$$a_{0,0,1}(a_{1,0,0}b_{0,0,1}-a_{0,0,1}b_{1,0,0}+ 
2d_2a_{0,2,0}b_{0,0,0}-2d_1a_{0,0,0}b_{0,2,0}+(d_2-1)a_{0,1,0}b_{0,1,0}-(d_1-1)a_{0,1,0}b_{0,1,0)}).$$
By direct computation we obtain
$$\pa{w_1}{b_{0,0,0}}=d_2a_{0,1,0},  \pa{w_2}{a_{0,2,0}}=2d_2a_{0,0,1}b_{0,0,0};\pa{w_1}{a_{0,0,0}}=-d_1b_{0,1,0},  \pa{w_2}{b_{0,2,0}}=-2d_1b_{0,0,1}a_{0,0,0}.$$
Thus  the equations $w_1=0$ and $w_2=0$ are independent outside the set $$\{  a_{0,1,0}=0\}\cap \{  b_{0,1,0}=0\}\cup (\{ a_{0,0,1}=0\}\cup \{b_{0,0,0}=0\})\cap ( \{ b_{0,0,1}=0\}\cup \{ a_{0,0,0}=0\}). $$ So $X_p$ has codimension 2 in $Y_p$.
Finally note that if $d_2c=d_1d$ then $$d_2\overline{g}(P)\left(\pa{\tilde{f}(P)}{x}\tilde{a_1}+\pa{\tilde{f}(P)}{y}\tilde{b_1}+\pa{\tilde{f}(P)}{w}\tilde{\delta}\right)=
d_1\overline{f}(P)\left(\pa{\tilde{g}(P)}{x}\tilde{a_1}+\pa{\tilde{g}(P)}{y}\tilde{b_1}+\pa{\tilde{g}(P)}{w}\tilde{\delta}\right).$$ Hence we consider the set
$$Z=\Big\{(p,F)\in \Gamma\times\Omega_3(d_1,d_2):\ \tilde{J}(F)(p)=$$
$$d_2\overline{g}(P)\left(\pa{\tilde{f}(P)}{x}\tilde{a_1}+\pa{\tilde{f}(P)}{y}\tilde{b_1}+\pa{\tilde{f}(P)}{w}\tilde{\delta}\right)=
d_1\overline{f}(P)\left(\pa{\tilde{g}(P)}{x}\tilde{a_1}+\pa{\tilde{g}(P)}{y}\tilde{b_1}+\pa{\tilde{g}(P)}{w}\tilde{\delta}\right)=0\Big\}.$$
Similarly as above one can show that it has codimension $2$, which concludes the proof.
\end{proof}

Let $C_p$ be the branch of $\overline{C(F)}$ at $P$. Exactly as in Section 4 by using the Puiseux expansion we can show that if $d_1=d_2$ then $\overline{F}(C_P)$ is smooth at $\overline{F}(P)$ and if $d_1>d_2$ then  $2\delta(\overline{F}(C_P))_{\overline{F}(P)}=(d_1-1)(d_1-d_2-d)+(d_1+1-1)(d-1)=(d_1-1)(d_1-d_2-1)+(d-1)$, where $d=\gcd(d_1,d_2)$.

To proceed further we also need:

\begin{lem}\label{rowS}
If $F$ is a generic mapping then
$$\overline{f}(P_i)^{d_2}\overline{g}(P_j)^{d_1}\neq\overline{f}(P_j)^{d_2}\overline{g}(P_i)^{d_1}$$
for $i,j\in\{1,2,\ldots,2(d_1+d_2-1)\}$ and $i\neq j$.
\end{lem}

\begin{proof}
Consider the set $X=\{(p,q,F)\in \Gamma\times \Gamma\times\Omega_3(d_1,d_2):\ p\neq q,\ \tilde{J}(F)(p)=\tilde{J}(F)(q)=
\tilde{f}(p)^{d_2}\tilde{g}(q)^{d_1}-\tilde{f}(q)^{d_2}\tilde{g}(p)^{d_1}=0\}$. Similarly as in Lemma \ref{lemstyczne'} we will prove that $X$ has codimension $3$, so there is a dense open subset $U\subset \Omega_3(d_1,d_2)$ such that the projection from $X$ has empty fibers over $F\in U$.

Indeed, take $p=(1:0:0:0)$, $q=(0:0:1:0)$ and $Y:=\{(p,q)\}\times\Omega_3(d_1,d_2)$. It suffices to show that $X_0=X\cap Y$ has codimension $3$ in $Y$. Let $a_{ij}$ and $b_{ij}$ be the parameters in $\Omega_3(d_1,d_2)$ giving respectively the coefficients of $\tilde{f}$ at $x^{d_1-i-j}y^iz^j$ and of $\tilde{g}$ at $x^{d_2-i-j}y^iz^j$.

The three equations describing $X_0$ are 
$$w_1=d_1a_{0,0}b_{101}-d_2a_{1,0}b_{0,0}=0, w_2=d_2a_{1,d_1-1}b_{0,d_2}-d_1a_{0,d_1}b_{1,d_2-1}=0,$$   
$$w_3=a_{0,0}^{d_2}b_{0,d_2}^{d_1}-a_{0,d_1}^{d_2}b_{0,0}^{d_1}=0.$$ Note that 
$X_0\cap\{a_{0,0}=0\}\subset\{a_{0,0}=b_{0,0}=w_2=0\}\cup \{a_{0,0}=a_{0,1}=w_2=0\}$ has codimension $3$. Similarly $X_0\cap\{b_{0,0}=0\}$ and $X_0\cap\{a_{0,d_1}=0\}$ have codimension $3$, however outside the set $\{a_{0,0}=0\}\cup\{b_{0,0}=0\}\cup\{a_{0,d_1}=0\}$ the three equations are obviously independent. Thus $X_0$ has codimension $3$ in $X$.
\end{proof}

Now we are in a position to compute $\sum_{z\in (\overline{\Delta}\setminus \Delta)} \delta_z$. If $d_1=d_2$ then $\overline{\Delta}$ has exactly $2(d_1+d_2-1)$ smooth points at infinity and consequently $\sum_{z\in (\overline{\Delta}\setminus \Delta)} \delta_z=0$ (see the statement after Lemma \ref{lemstyczne'}). So assume $d_1>d_2$, then $\overline{\Delta}$ has only one point at infinity $Q=(1:0:0)$.
In $Q$ the curve $\overline{\Delta}$ has exactly $r=2(d_1+d_2-1)$ branches $V_i=\overline{F}(C_{P_i})$. We have $2\delta(V_i)_Q=(d_1-1)(d_1-d_2-1)+(d-1)$. As in Section \ref{secDF} we have $V_i\cdot V_j=d_1(d_1-d_2)$. Consequently
$$\sum_i \delta(V_i)+ \sum_{i>j} V_i\cdot V_j=[(d_1-1)(d_1-d_2-1)+(d-1)](d_1+d_2-1)+$$
$$d_1(d_1-d_2)(d_1+d_2-1)(2(d_1+d_2-1)-1)=$$
$$2d_1(d_1-d_2)(d_1+d_2-1)^2+(-2d_1+d_2+d)(d_1+d_2-1).$$
\end{proof}

We can now prove the following:

\begin{theo}
There is a Zariski open, dense subset $U\subset \Omega_S(d_1,d_2)$
such that for every mapping $F\in U$ the discriminant $\Delta(F)=F(C(F))$ has only cusps and nodes as singularities.
The number of cusps is equal to
$$c(F)=2(d_1^2+d_2^2+3d_1d_2-3d_1-3d_2+1)$$
and the number of nodes is equal to
$$d(F)=(2d_1d_2-3)D^2-D(d_1+d_2+d-2)-2(d_1d_2-d_1-d_2),$$
where $D=d_1+d_2-1$ and $d=\gcd(d_1,d_2).$
\end{theo}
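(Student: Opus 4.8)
The plan is to repeat, essentially line for line, the argument that proved the analogous statement for the plane immediately after Theorem \ref{theodeltaz}, substituting the spherical ingredients. By symmetry I would assume $d_1\ge d_2$ throughout and set $D=d_1+d_2-1$. The first, purely qualitative, point --- that a generic $F$ has a discriminant whose only singularities are cusps and nodes and that $F$ restricts to a birational morphism $C(F)\to\Delta(F)$ --- is already recorded at the opening of Section \ref{secDF'}; it is obtained exactly as in Lemma \ref{bir}, by applying Theorem \ref{trans} to the multi-jet strata $(\Sigma^1,\Sigma^1)$ and $(\Sigma^1,\Sigma^1,\Sigma^1)$ over $S$. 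The cusp count $c(F)$ is then supplied verbatim by Theorem \ref{thmcusps'}, so only $d(F)$ requires work.

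The heart of the proof is a single invocation of Serre's genus formula (Theorem \ref{thmgenusdelta}) for the irreducible projective curve $\overline{\Delta}$; irreducibility is guaranteed because $C(F)$ is connected by Theorem \ref{thmcusps'}. I would feed in three inputs: the degree $\deg\overline{\Delta}=2d_1D$ from Lemma \ref{lemdegdisc'}; the geometric genus, which by birationality equals the genus $(d_1+d_2-2)^2=(D-1)^2$ of $\overline{C(F)}$ computed in Theorem \ref{thmcusps'}; and the contribution from infinity $\sum_{z\in(\overline{\Delta}\setminus\Delta)}\delta_z=2d_1(d_1-d_2)D^2+(-2d_1+d_2+d)D$ furnished by Theorem \ref{theodeltaz'}. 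Since each affine cusp and each node contributes $\delta_z=1$, the affine singular locus accounts for exactly $c(F)+d(F)$.

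Substituting into $\tfrac12(\deg\overline{\Delta}-1)(\deg\overline{\Delta}-2)=g+c(F)+d(F)+\sum_{z\in(\overline{\Delta}\setminus\Delta)}\delta_z$ and expanding the left side as $2d_1^2D^2-3d_1D+1$, the leading $2d_1^2D^2$ term cancels against the $2d_1(d_1-d_2)D^2$ coming from infinity, leaving
$$c(F)+d(F)=(2d_1d_2-1)D^2-(d_1+d_2+d-2)D.$$
Rewriting the cusp number of Theorem \ref{thmcusps'} in the variable $D$ as $c(F)=2d_1d_2+2D^2-2D-2$ and subtracting, one obtains $d(F)=(2d_1d_2-3)D^2-(d_1+d_2+d-4)D-2d_1d_2+2$; substituting $D=d_1+d_2-1$ to merge the residual linear and constant terms then puts this into the asserted form $(2d_1d_2-3)D^2-(d_1+d_2+d-2)D-2(d_1d_2-d_1-d_2)$.

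I do not expect a genuine obstacle here: the entire difficulty has already been discharged in Theorem \ref{theodeltaz'} and its supporting Lemmas \ref{lemstyczne'} and \ref{rowS}, which pin down the local structure of $\overline{\Delta}$ at infinity and guarantee that the branches $V_i$ are pairwise distinct, so that no further singular points are created or overlooked. The remaining work is only the bookkeeping of the substitution above, and the sole place warranting care is the algebraic reconciliation of the two equivalent presentations of $d(F)$ through the relation $D=d_1+d_2-1$.
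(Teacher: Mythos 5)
Your proposal is correct and follows essentially the same route as the paper's own proof: the same three inputs (degree $2d_1D$ from Lemma \ref{lemdegdisc'}, geometric genus $(D-1)^2$ via birationality with $C(F)$, and the delta invariants at infinity from Theorem \ref{theodeltaz'}) are substituted into Serre's formula, yielding $c(F)+d(F)=(2d_1d_2-1)D^2-(d_1+d_2+d-2)D$ exactly as in the paper. Your algebraic bookkeeping, including the intermediate form $(2d_1d_2-3)D^2-(d_1+d_2+d-4)D-2d_1d_2+2$ and its reconciliation with the stated formula via $D=d_1+d_2-1$, checks out.
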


\begin{proof}
Let $d_1\geq d_2$ and $D=(d_1+d_2-1)$. By Lemma \ref{lemdegdisc'} we have $\deg\Delta(F)= 2d_1D$. Since $\Delta(F)$ is birational with $C(F)$ it has genus $g=D(D-2)+1$. Moreover, $\Delta(F)$ has only cusps and nodes as singularities thus by Theorem \ref{thmgenusdelta} we have
$$ \frac{1}{2}(2d_1D-1)(2d_1D-2)=D(D-2)+1+c(F)+d(F)+ \sum_{z\in (\overline{\Delta}\setminus \Delta)} \delta_z.$$
 Substituting
$$\sum_{z\in (\overline{\Delta}\setminus \Delta)} \delta_z=2d_1(d_1-d_2)D^2+(-2d_1+d_2+d)D$$
we obtain
$$c(F)+d(F)=(2d_1d_2-1)D^2-D(d_1+d_2+d-2).$$
Thus by Theorem \ref{thmcusps'} we get:
$$d(F)=(2d_1d_2-1)D^2-D(d_1+d_2+d-2)-2(d_1^2+d_2^2+3d_1d_2-3d_1-3d_2+1)=$$
$$(2d_1d_2-3)D^2-D(d_1+d_2+d-2)-2(d_1d_2-d_1-d_2).$$
\end{proof}

\begin{re}
{\rm If $d_1=d_2=d$ then the discriminant has $4d-2$ smooth points at infinity and in each of these points it is tangent to the line $L_\infty$ (at infinity) with multiplicity $d.$
If $d_1>d_2$ then the discriminant has only one point at infinity with $2(d_1+d_2-1)$ branches $V_1,\ldots, V_{2(d_1+d_2-1)}$ and each of these branches has delta invariant
$$\delta(V_i)=\frac{(d_1-1)(d_1-d_2-1)+(d-1)}{2}$$ and $V_i\cdot L_\infty=d_1$. Additionally $V_i\cdot V_j=d_1(d_1-d_2)$. In particular branches $V_i$ are smooth if and only if
$d_1=d_2$ or $d_1=d_2+1.$}
\end{re}

\section{Generalized cusps}\label{secGC}

In this section our aim is to estimate the number of cusps of
non-generic mappings. We start from:

\begin{defi}\label{dfGenCus}
Let $F \colon (\C^2,a)\to (\C^2,F(a))$ be a germ of a holomorphic mapping. We say
that $F$ has a generalized cusp at $a$ if  $F_a$ is proper, the
curve $J(F)=0$ is reduced near $a$ and  the discriminant of $F_a$
is not smooth at $F(a)$.
\end{defi}

\begin{re}
If $F_a$ is proper, $J(F)=0$ is reduced near $a$ and $J(F)$ is
singular at $a$ then it follows from Theorem 1.14 from \cite{jel} that
also the discriminant of $F_a$ is singular at $F(a)$ and hence
$F$ has a generalized cusp at $a$.
\end{re}

Now we introduce the index of generalized cusp:

\begin{defi}\label{dfGenCusIn}
Let $F=(f,g)\colon (\C^2,a)\to (\C^2,F(a))$ be a germ of a holomorphic mapping.
Assume that $F$ has a generalized cusp at a point $a\in \C^2$.
Since the curve $J(F)=0$ is reduced near $a$, we have that the set
$\{\nabla f=0\}\cap \{\nabla g=0\}$ has only isolated points near
$a.$ For a generic linear mapping $T\in GL(2)$,  if
$F'=(f',g')=T\circ F$ then $\nabla f'$ does not vanish
identically on any branch of $\{J(F)=0\}$ near $a$.  We say that
the  cusp of $F$ at $a$ has an index $\mu_a:={\dim}_\C {\mathcal
O}_a/(J(F'), J_{1,1}(F'))-{\dim}_\C {\mathcal O}_a/(f'_x, f'_y)$.
\end{defi}

\begin{re}
We show below that the index $\mu_a$ is well-defined and
finite. Moreover, it is easy to see that a simple cusp has index
one.
\end{re}

\begin{re}
Using the exact sequence $1.7$ from \cite{gm1} we see that
$$\mu_a={\dim}_\C {\mathcal O}_a/(J(F), J_{1,1}(F), J_{1,2}(F)).$$
Hence our index coincides with the classical local number of cusps
defined e.g. in \cite{gm1}.
\end{re}

We have (compare with  \cite{gm1}, \cite{gm2}, \cite{gaf}):

\begin{theo}\label{tw}
Let $X\subset \C^m$ be a smooth surface.
Let $F=(f,g)\in \Omega_m(d_1,d_2)$. Assume that $F|_X$ has a
generalized cusp at $a\in X$. If $U_a\subset X$ is a sufficiently
small ball around $a$ then $\mu_a$ is equal to the number of
simple cusps in $U_a$ of a  mapping $F'|_U$ where $F'\in \Omega_m(d_1',
d_2')$ is a generic mapping,  which is sufficiently
close to $F$ in the natural topology of $\Omega_m(d_1', d_2')$. Here $d_1'\ge d_1, d_2'\ge d_2$.
\end{theo}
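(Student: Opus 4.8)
The plan is to derive the statement from conservation of number under deformation, using the genericity results of Section \ref{secCF} to identify the limiting configuration. Everything is local at $a$; since $X$ is a smooth surface I would pass to a Zariski open neighbourhood of $a$ carrying global holomorphic coordinates $x,y$ (as in Theorem \ref{thgeneric}), in which $J$, $J_{1,1}$, $J_{1,2}$ and the computation of Lemma \ref{lemtrans3} are available. By the Remark after Definition \ref{dfGenCusIn} the index is intrinsic,
$$\mu_a=\dim_\C \mathcal{O}_a/(J(F),J_{1,1}(F),J_{1,2}(F)),$$
and its zero-set is exactly the generalized cusp, an isolated point; I would shrink $U_a$ so that $a$ is its only zero in $\overline{U_a}$.

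By Theorem \ref{cor1} the mappings whose only singularities are folds and cusps form a Zariski dense subset of $\Omega_X(d_1',d_2')$, and since $d_i'\ge d_i$ we have $F|_X\in\Omega_X(d_1',d_2')$; hence a generic such $G$ may be chosen arbitrarily close to $F$. For this $G$ every cusp in $U_a$ is simple, and a simple cusp $b$ has local cusp number $\dim_\C \mathcal{O}_b/(J(G),J_{1,1}(G),J_{1,2}(G))=1$, so the number of simple cusps of $G$ in $U_a$ equals $\sum_{b\in U_a}\dim_\C \mathcal{O}_b/(J(G),J_{1,1}(G),J_{1,2}(G))$.

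The heart of the proof is the conservation identity
$$\dim_\C \mathcal{O}_a/(J(F),J_{1,1}(F),J_{1,2}(F))=\sum_{b\in U_a}\dim_\C \mathcal{O}_b/(J(G),J_{1,1}(G),J_{1,2}(G)).$$
To prove it I would reduce the cusp number to complete intersections through the exact sequence $1.7$ of \cite{gm1}: after post-composing with the generic $T\in GL(2)$ of Definition \ref{dfGenCusIn}, which leaves the cusp set unchanged and makes the gradient of the first component non-vanishing on the branches of $\{J=0\}$, the cusp number becomes $\dim_\C \mathcal{O}/(J,J_{1,1})-\dim_\C \mathcal{O}/(f_x,f_y)$. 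Each of these two complete-intersection colengths is separately conserved under the deformation because $G|_X$ is proper, so no zeros escape across $\partial U_a$; at the limiting zeros of the first-component gradient Lemmas \ref{lemtrans1}--\ref{lemtrans3} force both ideals to equal $\mathfrak{m}_b$, so those contributions cancel in the difference and only the simple cusps remain. Subtracting the two conserved quantities yields the identity above.

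The step I expect to be the main obstacle is exactly this conservation of number: one must verify that along a path $F_t$ joining $F$ to $G$ no common zero of the defining functions crosses $\partial U_a$ and that the total colength is preserved. This rests on properness of $G|_X$ and on the complete-intersection reformulation secured by the generic target change $T$; it can be phrased as flatness of the family of Artinian algebras attached to $(J(F_t),J_{1,1}(F_t))$ over the parameter, or through the classical principle that the colength of a complete intersection equals the number of transverse intersection points of a generic small perturbation (cf. \cite{gm1}, \cite{gm2}, \cite{gaf}).
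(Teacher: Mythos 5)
Your overall strategy---perturb $F'=T\circ F$ to a nearby generic $G$, conserve the two complete-intersection colengths $\dim_\C\mathcal{O}/(J,J_{1,1})$ and $\dim_\C\mathcal{O}/(f'_x,f'_y)$ separately, and cancel the spurious common zeros where the first gradient vanishes using Lemmas \ref{lemtrans1}--\ref{lemtrans3}---is essentially the paper's own proof, which phrases the conservation step via local topological degrees $d_a(\Phi)$, $d_a(\Psi)$ and the Rouch\'e theorem rather than flatness. But there is a genuine gap at the point you take for granted at the very start: the assertion that the zero set of the cusp ideal is ``an isolated point'', equivalently that $d_a\bigl(J(F'),J_{1,1}(F')\bigr)<\infty$. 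You cannot cite the Remark after Definition \ref{dfGenCusIn} for this: that Remark says finiteness of $\mu_a$ ``is shown below'', i.e.\ it is part of the content of Theorem \ref{tw} itself, so invoking it is circular. Nothing in the definition of a generalized cusp rules out, a priori, that $J_{1,1}(F')$ vanishes identically on a whole branch $B$ of $\{J(F)=0\}$ through $a$; in that case no ball $U_a$ has $a$ as its only zero and the conservation argument never gets started. The paper closes exactly this hole with the one genuinely non-trivial observation of its proof: if $J_{1,1}(F')$ vanished on a branch $B$, then $F|_B$ would have rank zero, hence by Sard's theorem $F$ would contract $B$, contradicting the properness of the germ $F_a$ demanded by Definition \ref{dfGenCus}.

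Relatedly, you attribute the conservation of number to properness of the wrong map. Properness of the perturbed map $G|_X$ plays no role: once $a$ is known to be an isolated zero of $(J(F'),J_{1,1}(F'))$ and of $\nabla f'$, one shrinks $U_a$ so that $a$ is the only zero in $\overline{U_a}$, and then for $G$ sufficiently close to $F'$ the absence of zeros on $\partial U_a$ is automatic by continuity; Rouch\'e (or your flatness formulation) does the rest. The properness that matters is that of the germ $F_a$ of the \emph{unperturbed} map, used precisely for the finiteness above. (The finiteness of the second colength $\dim_\C\mathcal{O}_a/(f'_x,f'_y)$ is the separate, easier point: $\{\nabla f'=0\}\subset\{J(F)=0\}$, and the generic $T$ of Definition \ref{dfGenCusIn} guarantees $\nabla f'$ vanishes on no branch of that curve.) With the Sard/properness argument inserted and this misattribution repaired, your proof coincides with the paper's.
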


\begin{proof}
We can assume that $X=\C^2$ and  $\nabla f$ does not vanish identically on any
branch of $\{J(F)=0\}$ near $a.$ In particular we have ${\rm dim}\
{\mathcal O_a}/(f_x,f_y)= {\rm dim } \ {\mathcal O_a}/(J(F), f_x,
f_y)<\infty.$

Let $F_i=(f_i, g_i)\in\Omega_2(d_1', d_2')$ be a sequence of
generic mappings, which is convergent to $F.$ Consider the
mappings $\Phi=(J(F), J_{1,1}(F))$, $\Phi_i=(J(F_i),
J_{1,1}(F_i))$, $\Psi=(\nabla f)$ and $\Psi_i=(\nabla f_i).$ Thus
$\Phi_i\to \Phi$ and $\Psi_i\to \Psi.$

Since $a$ is a cusp of $F$ we have $\Phi(a)=0$. Moreover
$d_a(\Phi)<\infty,$ where $d_a(\Phi)$ denotes the local
topological degree of $\Phi$ at $a$. Indeed, if $J_{1,1}(F)=0$ on
some branch $B$ of the curve $J(F)=0$ then the rank of $F_{|B}$
would be zero and by Sard theorem $F$ has to contract $B$, which
is a contradiction ($F_a$ is proper). By the Rouche Theorem (see \cite{cir}, p. 86), we
have that for large $i$ the mapping $\Phi_i$ has exactly
$d_a(\Phi)$ zeroes in $U_a$ and $\Psi_i$ has exactly
$d_a(\Psi)$ zeroes in $U_a$ (counted with multiplicities, if
$\Psi(a)\not=0$ we put $d_a(\Psi)=0$). However, the mappings $F_i$
are generic, in particular all zeroes of $\Phi_i$ and $\Psi_i$ are
simple. Moreover the zeroes of $\Phi_i$ which are not cusps of $F_i$ are
zeroes of $\Psi_i$.
Hence $\mu_a=d_a(\Phi)-d_a(\Psi)$ is indeed the number of
simple cusps of $F_i$ in $U_a$.
\end{proof}

\begin{co}
Let $X$ be a smooth affine surface. If $F=(f, g)\colon X\to\C^2$ is an arbitrary polynomial mapping
with $\deg f\le d_1$, $\deg g\le d_2$  and generalized cusps at points $a_1,\ldots, a_r$ then $\sum^r_{i=1}
\mu_{a_i}\le c_X(d_1,d_2)$, where $c_X(d_1,d_2)$ is the number of cusps of a generic mapping from $\Omega_X(d_1,d_2).$
\end{co}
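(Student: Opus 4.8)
The plan is to obtain the inequality directly from Theorem \ref{tw} by a localization argument, applied with $d_1'=d_1$ and $d_2'=d_2$; this is permitted because the theorem only requires $d_i'\ge d_i$, and here $F\in\Omega_m(d_1,d_2)$ already. First I would choose pairwise disjoint open balls $U_{a_1},\ldots,U_{a_r}\subset X$ centered at the generalized cusps $a_1,\ldots,a_r$. Since the generic mappings form a Zariski open dense, hence Euclidean dense, subset of $\Omega_X(d_1,d_2)$, I can fix a sequence $F_i\to F$ of generic mappings in $\Omega_X(d_1,d_2)$ converging to $F$ in the natural topology. Each $a_j$ is indeed a generalized cusp of $F$, so the index $\mu_{a_j}$ is well-defined and finite by Definition \ref{dfGenCusIn} and the subsequent remarks.

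Next I would apply Theorem \ref{tw} at each point $a_j$ separately: for each $j$ there is an index $N_j$ such that for every $i\ge N_j$ the number of simple cusps of $F_i$ lying in $U_{a_j}$ equals $\mu_{a_j}$. Setting $N=\max_j N_j$ and fixing any $i\ge N$, all $r$ conclusions hold simultaneously for the single generic mapping $F_i$.

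Then I would count. Because the balls $U_{a_j}$ are pairwise disjoint, the cusps counted in distinct balls are themselves distinct, so $F_i$ has at least $\sum_{j=1}^r\mu_{a_j}$ cusps in $\bigcup_j U_{a_j}$, and hence at least that many cusps on all of $X$. On the other hand $F_i$ is a generic member of $\Omega_X(d_1,d_2)$, so it has exactly $c_X(d_1,d_2)$ cusps in total. Combining the two counts yields $\sum_{j=1}^r\mu_{a_j}\le c_X(d_1,d_2)$, as claimed.

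The step demanding the most care is the simultaneous application of Theorem \ref{tw} across all the points $a_j$ using one and the same approximating sequence: one must guarantee that a single $F_i$ is at once generic and close enough to $F$ for the Rouch\'e-type cusp count of Theorem \ref{tw} to be valid in every ball at the same time. This is exactly what the uniform choice $N=\max_j N_j$ secures, after which the disjointness of the balls makes the additivity of the cusp count, and thus the final inequality, immediate.
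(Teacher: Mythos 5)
Your proof is correct and is essentially the argument the paper intends: the paper treats this corollary as an immediate consequence of Theorem \ref{tw} (compare the proof of Corollary \ref{cor}, which invokes exactly this deduction), and your localization-plus-counting argument -- disjoint balls around the $a_j$, one sufficiently close generic $F_i\in\Omega_X(d_1,d_2)$, additivity of the local cusp counts, and the fact that a generic member has exactly $c_X(d_1,d_2)$ cusps -- is precisely that deduction written out in full. The only detail to add is that the balls $U_{a_j}$ must also be taken sufficiently small for Theorem \ref{tw} to apply at each point, which is compatible with choosing them pairwise disjoint.
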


For example we have:

\begin{co}\label{cor}
Let $F\in \Omega_2(d_1,d_2)$. Assume that $F$ has generalized
cusps at points $a_1,\ldots, a_r$. Then $\sum^r_{i=1} \mu_{a_i}\le d_1^2+d_2^2+3d_1d_2-6d_1-6d_2+7$.
In particular the numbers of singular germs $\{F_a, \ a\in \C^2\}$ which are finitely determined and are not folds, is bounded
by the number $d_1^2+d_2^2+3d_1d_2-6d_1-6d_2+7 $.
\end{co}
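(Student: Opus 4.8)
The plan is to obtain this statement as a direct specialization of the preceding Corollary, combined with the explicit cusp count of Theorem \ref{thmcusps}. First I would apply that Corollary in the case $X=\C^2$: since $F=(f,g)\in\Omega_2(d_1,d_2)$ satisfies $\deg f\le d_1$, $\deg g\le d_2$ and has generalized cusps at $a_1,\ldots,a_r$, it yields at once
$$\sum_{i=1}^r \mu_{a_i}\le c_{\C^2}(d_1,d_2).$$
It then remains only to insert the value $c_{\C^2}(d_1,d_2)=d_1^2+d_2^2+3d_1d_2-6d_1-6d_2+7$ computed in Theorem \ref{thmcusps}, which gives the stated inequality with no further work.

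For the ``in particular'' assertion I would show that every finitely determined germ $F_a\colon(\C^2,a)\to(\C^2,F(a))$ which is a singularity but not a fold is a generalized cusp of index $\mu_a\ge 1$. The three conditions of Definition \ref{dfGenCus} would be verified as follows: finite $\mathcal A$-determinacy forces $F_a$ to be a finite, hence proper, map-germ and forces the critical curve $\{J(F)=0\}$ to be reduced near $a$; and since $F_a$ is singular but not a fold, its discriminant cannot be smooth at $F(a)$, for otherwise the argument of Theorem \ref{disc} (via \cite{jel}) would identify $F_a$ with a fold. Thus $F_a$ is a generalized cusp. That its index is positive follows from the formula $\mu_a=\dim_\C\mathcal O_a/(J(F),J_{1,1}(F),J_{1,2}(F))$ recorded above: a non-fold singular germ lies in $\Sigma^{1,1}$, so all three generators vanish at $a$, the defining ideal is contained in $\mathfrak m_a$, and the quotient has dimension at least $1$ (indeed a fold has index $0$ and a simple cusp index $1$).

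Finally, distinct such germs occur at distinct points, each of which is therefore a generalized cusp of $F$ of index at least $1$; were there more than $c_{\C^2}(d_1,d_2)$ of them, applying the inequality of the first paragraph to $c_{\C^2}(d_1,d_2)+1$ of these points would force $\sum_i\mu_{a_i}\ge c_{\C^2}(d_1,d_2)+1$, a contradiction. Hence the number of finitely determined non-fold germs is bounded by $d_1^2+d_2^2+3d_1d_2-6d_1-6d_2+7$. The step I expect to require the most care is precisely this identification of finitely determined non-fold germs with positive-index generalized cusps -- in particular the passage from finite determinacy to reducedness of the critical curve and to non-smoothness of the discriminant; once these are secured, the conclusion is a formal consequence of the earlier Corollary and Theorem \ref{thmcusps}.
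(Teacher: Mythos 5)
Your proposal follows the paper's proof essentially step for step: the displayed inequality is, exactly as you say, the preceding Corollary specialized to $X=\C^2$ combined with the value $c_{\C^2}(d_1,d_2)=d_1^2+d_2^2+3d_1d_2-6d_1-6d_2+7$ from Theorem \ref{thmcusps}, and the ``in particular'' assertion is proved in the paper by the same dichotomy you use (the discriminant of $F_a$ is either smooth or singular at $F(a)$), identifying every finitely determined non-fold singular germ as a generalized cusp. Your explicit verification that $\mu_a\ge 1$ (all three of $J$, $J_{1,1}$, $J_{1,2}$ lie in $\mathfrak{m}_a$ at a non-fold singular point) together with the pigeonhole count is a legitimate unpacking of what the paper compresses into ``it follows directly from Theorem \ref{tw}''; note only that your claim that a non-fold singular germ lies in $\Sigma^{1,1}$ uses the small computation that at a corank-one point non-transversality, i.e.\ $\nabla J(a)=0$, already forces $L_2(a)=L_3(a)=0$, since $L_2=J_xf_y-J_yf_x$ and $L_3=J_xg_y-J_yg_x$, while at corank-two points all three equations vanish trivially.

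One justification, however, needs repair. In the smooth-discriminant case you write that ``the argument of Theorem \ref{disc} (via \cite{jel}) would identify $F_a$ with a fold''. That argument does not apply verbatim: \cite{jel} only gives that a proper germ whose discriminant germ is smooth is equivalent to $(x,y)\mapsto(x^k,y)$ for some $k$, and Theorem \ref{disc} excludes $k\ge 3$ by invoking one-genericity (transversality of $j^1(F)$ to $\Sigma^1$), which an arbitrary $F\in\Omega_2(d_1,d_2)$ need not satisfy --- a non-generic germ such as $(x,y)\mapsto(x^3,y)$ has smooth discriminant and is not a fold. The correct way to force $k=2$, and the one the paper uses, is the reducedness of $J(F_a)$, which you have already derived from finite determinacy: the Jacobian of $(x^k,y)$ is $kx^{k-1}$, which is reduced only for $k=2$. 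With that substitution --- reducedness rather than transversality as the mechanism excluding higher folds --- your proof coincides with the paper's.
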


\begin{proof}
Let $F_a$ be a singular germ which is finitely determined. Then the curve $J(F_a)$ is reduced. There are two possibilities:

1) the point $F(a)$ is a non-singular point of $\Delta(F),$

2) the point $F(a)$ is a singular point of $\Delta(F).$

\noindent In the case 1) we have by \cite{jel} that $F_a$ is equivalent to the germ $(x,y)\to(x^k,y)$ and since $J(F_a)$ is reduced we have $k=2$, i.e. $F_a$ is a fold.

\noindent In the case 2) $F_a$ is a generalized cusp. Hence the number of  germs  $F_a$ which are finitely determined and are not folds is bounded by the number of generalized cusps. It follows directly from Theorem \ref{tw} that the latter number is bounded by  the number of cusps of a generic mapping from $\Omega_2(d_1,d_2)$.
\end{proof}

\begin{re}
{\rm In the same way we can show that for the mapping $F\in \Omega_S(d_1,d_2)$ the numbers of singular germs $\{F_a, \ a\in S \}$ which are finitely determined and are not folds, is bounded
by the number  $2(d_1^2+d_2^2+3d_1d_2-3d_1-3d_2+1)$.}
\end{re}

\section{Proper deformations}\label{finite}
In previous sections we considered the family $\Omega_X(d_1,\ldots,d_m)$, of course we can consider also other families of polynomial mappings and try to investigate their properties. Let $\mathcal F$ be any algebraic family of generically-finite polynomial mappings $f_p: X\to \C^m; \ p\in {\mathcal F}$, where $X$ is a smooth irreducible affine variety. We would like to know the behavior of proper mappings in  such family. In general proper mappings do not form an algebraic subset of $\mathcal F$ but only constructible one. However we show that there is some regular behavior in such  family. We have:

\begin{theo}\label{rodzina}
Let $P, X, Y$ be  smooth irreducible affine algebraic varieties and let $F: P\times X\to P\times Y$ be a generically finite mapping.
The mapping $F$ induces a family ${\mathcal F}=\{ f_p(\cdot)=F(p,\cdot), \ p\in P \}$. Then either there exists a Zariski open dense subset 
$U\subset P$ such that for every $p\in U$ the mapping $f_p$ is proper,
or there exists a Zariski open dense subset 
$V\subset P$ such that for every $p\in V$ the mapping $f_p$ is not proper. 

Moreover, in the first case we have:

a) for every non-proper mapping $f_p$ in the family $\mathcal F$ we have  $\mu(f_p)<\mu(F)$, where $\mu(f)$ denotes the geometric degree of $f$,

b) generic mappings in $\mathcal F$ are topologically equivalent, i.e., there exists a Zariski open dense subset $W\subset P$ such that for every $p,q\in W$ the mappings $f_p$ and $f_q$ are topologically equivalent.
\end{theo}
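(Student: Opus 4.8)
The plan is to organize everything around the set of non-properness and the geometric degree, both controlled by Jelonek's machinery already invoked in the paper. Recall that for a dominant generically finite polynomial map $h$ between smooth affine irreducible varieties of the same dimension, the set $S_h$ of points at which $h$ is not proper is either empty (precisely when $h$ is proper) or a nonempty hypersurface, and that these notions behave algebraically in families. Throughout I write $F(p,x)=(p,f_p(x))$, so that $f_p=\pi_Y\circ F(p,\cdot)\colon X\to Y$, the reading implicit in the statement.

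First I would establish the dichotomy. Consider the set $B=\{p\in P: f_p \text{ is not proper}\}$. The key point is that $B$ is constructible: passing to projective closures $\overline{X},\overline{Y}$ and to the closure of the graph of $F$, non-properness of $f_p$ at some point of $Y$ is equivalent to the existence of a point of $\overline{X}\setminus X$ lying over $p$ whose image under the extended rational map is a finite point of $Y$. This is an algebraic incidence condition, so by Chevalley's theorem its projection $B$ to $P$ is constructible. A constructible subset of the irreducible variety $P$ is either dense, in which case it contains a dense Zariski open set $V$ (on which every $f_p$ is not proper), or has proper closure, in which case $U:=P\setminus\overline{B}$ is a dense Zariski open set on which every $f_p$ is proper. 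This yields the required alternative.

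Now assume we are in the first case and prove (a). Since $F$ commutes with the projections to $P$, its generic fiber over $(p,y)$ is $\{p\}\times f_p^{-1}(y)$, whence $\mu(f_p)=\mu(F)$ for generic $p$. The generic member, being proper, dominant and generically finite, is a finite morphism of degree $\mu(F)$, so over a generic target point it has exactly $\mu(F)$ preimages, all at finite distance. For a non-proper member $f_{p_0}$ some of these $\mu(F)$ preimages must escape to infinity as $p\to p_0$; this is exactly the content of $S_{f_{p_0}}\neq\emptyset$. Consequently a generic fiber of $f_{p_0}$ contains strictly fewer than $\mu(F)$ points, that is $\mu(f_{p_0})<\mu(F)$. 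The precise conservation-of-number statement, namely that the $\mu(F)$ points split into a finite part of cardinality $\mu(f_{p_0})$ and a strictly positive part escaping to infinity exactly when $f_{p_0}$ fails to be proper, is the heart of the matter and the step I expect to be the main obstacle: it rests on Jelonek's control of the degeneration of fibers in algebraic families rather than on any soft semicontinuity, and must be applied to each non-proper member individually, not merely generically.

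Finally, (b) is essentially a citation. Viewing $\pi_Y\circ F$ presents $\mathcal{F}$ as an algebraic family $P\times X\to Y$ of polynomial mappings whose generic member is proper, since we are in the first case. The result of \cite{jel2} then furnishes a Zariski open dense subset $W\subset P$ such that for all $p,q\in W$ the mappings $f_p$ and $f_q$ are topologically equivalent, which is precisely assertion (b). Shrinking $W$ to $W\cap U$ keeps the conclusions of the first case simultaneously in force.
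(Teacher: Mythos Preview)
Your dichotomy via constructibility of $B$ is fine and leads to the same alternative, but the paper organizes everything around a single object: the non-properness set $S=S_F\subset P\times Y$ of the total map $F$ and its projection $\pi:S\to P$. Since $S$ is a hypersurface (or empty), either $\pi(S)$ is dense in $P$ (generic $f_p$ not proper) or $\overline{\pi(S)}\subsetneq P$ (generic $f_p$ proper). This is more economical and, crucially, it is exactly what is needed for part (a).

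Your argument for (a) has a genuine gap, which you yourself flag. Non-properness of $f_{p_0}$ tells you only that $S_{f_{p_0}}\neq\emptyset$, i.e.\ that over \emph{some} $y_0$ the sum of local multiplicities drops below $\mu(f_{p_0})$; it does not by itself say that the \emph{generic} fiber of $f_{p_0}$ has fewer than $\mu(F)$ points. Your degeneration picture (``preimages escape as $p\to p_0$'') is suggestive but circular as written: you are trying to deduce $\mu(f_{p_0})<\mu(F)$, not assume it. The paper closes this gap with a dimension count. In the generically proper case $\overline{\pi(S)}$ is a proper closed subset of $P$, while $S$ has dimension $\dim P+\dim Y-1$; since fibers of $\pi$ have dimension at most $\dim Y$, one gets $\dim\overline{\pi(S)}=\dim P-1$ and every fiber $\pi^{-1}(p_0)$ over $p_0\in\pi(S)$ is the whole slice $\{p_0\}\times Y$. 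Thus $F$ fails to be proper at \emph{every} $(p_0,y)$, so $\sum_{f_{p_0}(x)=y}\mu_x(f_{p_0})=\sum\mu_{(p_0,x)}(F)<\mu(F)$ for all $y$; taking $y$ generic gives $\mu(f_{p_0})<\mu(F)$. This is the step you need to add, and it comes for free once you set things up with $S_F$ rather than with $B$ directly.

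Part (b) is indeed just the citation of \cite{jel2}, as you say.
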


\begin{proof}
First note that for every $(p,x)\in P\times X$ we have $\mu_{(p,x)}(F)= \mu_x (f_p)$ (here $\mu_x(f)$ denotes the local multiplicity of $f$ in $x$). In the sequel we use the fact that a mapping $g: X\to Y$ is proper
over a point $y\in Y$ if and only if $\sum_{g(x)=y} \mu_x(g)=\mu(g)$ (see \cite{Je}, \cite{Je1}).

Let $S$ be the non-properness set of $F$ (see e.g.  \cite{Je}, \cite{Je1}). If $S=\emptyset$, then all mappings $f_p$ are proper. 
Hence we can assume that $S\not= \emptyset$ and consequently $S$ is a hypersurface. Let $\pi: S\to P$ be the canonical projection. We have two possibilities:

\item{(1)} $\pi(S)$ is dense in $P$.

\item{(2)} $\pi(S)$ is not dense in $P$.

In the first case $\pi(S)$ is dense and constructible so a generic mapping $f_p$ is not proper. In the second case $S$ has dimension $\dim P+\dim X-1$ and a fiber of $\pi$ has dimension at most dim $X$. This immediately implies that the set $\overline{\pi(S)}$ is a hypersurface in $P$. Moreover, fibers of $\pi$ are the whole space $X$. This means that for all $p\in\pi(S)$ we have $\mu(f_p)<\mu(F)$. Of course outside $\pi(S)$ the mappings $f_p$ are proper. Two generic mappings are topologically equivalent by \cite{jel2}, Theorem 4.3.
\end{proof}

Now we state the main result of this section:

\begin{theo}
Let $X\subset \C^n$ be a smooth irreducible affine variety of dimension $k$ and let $F: X\to \C^m$ be a polynomial mapping.
If $m\ge k$, then there exists a Zariski open dense subset $U$ in the space of linear mappings ${\mathcal L}(\C^n,\C^m)$ 
such that:

a) for every $L\in U$ the mapping $F+L$ is a finite mapping. 

b) for all $L\in U$ the mappings $F+L$ are topologically equivalent.

c) for all $L\in U$ the mappings $F+L$ have only generic singularities,i.e., $F$ is transversal to the Thom-Boardman strata.
\end{theo}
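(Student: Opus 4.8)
The plan is to run all three parts through the single parametrized family
$$\Phi\colon P\times X\to P\times\C^m,\qquad \Phi(L,x)=(L,\,F(x)+L(x)),\qquad P:=\mathcal L(\C^n,\C^m),$$
whose members are exactly the maps $f_L:=F+L$ that interest us. Part c) is then essentially free: each Thom-Boardman stratum $\Sigma^I$ (and each multi-jet stratum $(\Sigma^{I_1},\ldots,\Sigma^{I_s})$) is a modular submanifold, so Corollary \ref{wazne} already produces, for each of the finitely many strata we need, a non-empty Zariski open set $U_I\subset P$ with $F+L\pitchfork\Sigma^I$; their intersection is again non-empty and Zariski open. Part b) will follow from Theorem \ref{rodzina} the moment part a) is in hand, since a) places the family $\Phi$ in the proper alternative of the dichotomy. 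Thus the whole weight of the argument rests on a): producing a dense, and then (after checking constructibility) Zariski open dense, set of $L$ for which $F+L$ is finite.

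For part a) I would avoid checking properness of $F+L$ by hand at infinity and instead import finiteness from a generic linear projection of the graph, in the spirit of Corollary \ref{wazne}. Let $\tilde X=\{(x,F(x)):x\in X\}\subset\C^{n+m}$, an affine variety of dimension $k$. Since $m\ge k$, a generic linear map $M\colon\C^{n+m}\to\C^m$ restricts to a finite map on $\tilde X$ (Noether normalization: a generic linear projection of a $k$-dimensional affine variety into $\C^m$ with $m\ge k$ is proper, and a proper morphism of affine varieties to affine space is finite). Writing $M=[\,M_1\mid M_2\,]$ with $M_1\in\mathcal L(\C^n,\C^m)$ and $M_2\in\mathcal L(\C^m,\C^m)$, the restriction $M|_{\tilde X}$ is the map $x\mapsto M_2F(x)+M_1(x)$. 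Hence for generic $(A,L)\in GL(m)\times\mathcal L(\C^n,\C^m)$ the map $AF+L$ is finite, and post-composing with the linear automorphism $A^{-1}$ shows that $F+A^{-1}L=A^{-1}(AF+L)$ is finite as well.

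It then remains to pass from ``$F+A^{-1}L$ finite for generic $(A,L)$'' to ``$F+L$ finite for generic $L$''. Let $\mathcal G\subset GL(m)\times\mathcal L(\C^n,\C^m)$ be the dense set on which $AF+L$ is finite and let $\Pi(A,L)=A^{-1}L$. As $\Pi$ is a dominant (indeed surjective) morphism and $\mathcal G$ is dense, $\Pi(\mathcal G)$ is dense in $\mathcal L(\C^n,\C^m)$, and $\Pi(\mathcal G)$ is contained in the locus $\mathcal F:=\{L:\ F+L\text{ finite}\}$. Moreover $\mathcal F$ is constructible: by \cite{Je,Je1} the non-properness set of the family $\Phi$ is an algebraic subset of $P\times\C^m$, so its projection to $P$ is constructible and $\mathcal F$ is its complement (finiteness and properness coinciding for these affine maps into affine space). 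A dense constructible set contains a non-empty Zariski open subset $U_a$, which establishes a).

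Finally, on $U_a$ every $f_L=F+L$ is proper, so $\Phi$ is generically finite and falls into the proper alternative of Theorem \ref{rodzina}; part b) of that theorem then yields a Zariski open dense $W\subset P$ on which all maps $F+L$ are topologically equivalent, giving b). Setting $U:=U_a\cap W\cap\bigcap_I U_I$ produces a non-empty Zariski open dense set on which a), b) and c) hold simultaneously. The one genuinely delicate point, and where I expect the main difficulty to lie, is part a): specifically the passage from the generic projection of $\tilde X$ to a genericity statement purely in $L$. One must verify that the finiteness condition is constructible, so that density can be upgraded to Zariski-openness, and one must handle the specialization in the matrix variable $A$ through the dominance of $\Pi$ and the closure argument above rather than by a naive substitution $A=\mathrm{Id}$, which would be circular.
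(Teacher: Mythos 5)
Your proposal follows the paper's route almost step for step: the graph $\tilde X$ plus a generic linear projection (using $m\ge k$) gives properness, hence finiteness, of $AF+L$ for generic $(A,L)$; composing with the automorphism $A^{-1}$ and passing to $L$ alone gives a dense set of good parameters; and b), c) come from Theorem \ref{rodzina} and Corollary \ref{wazne}. Your handling of the specialization step via surjectivity of $\Pi(A,L)=A^{-1}L$ is a correct (and more explicit) rendering of the paper's ``specialize $A$ to the identity''. The single place where you depart from the paper --- upgrading, in part a), the dense set of good $L$ to a Zariski open dense set --- is also the single place where your argument breaks.

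You claim that $\mathcal F=\{L:\ F+L\ \mbox{is finite}\}$ is constructible because it equals the complement of $\pi(S_\Phi)$, where $S_\Phi\subset P\times\C^m$ is the non-properness set of the family $\Phi$ and $\pi$ is the projection to $P$. Only the inclusion $P\setminus\pi(S_\Phi)\subset\mathcal F$ is true. The slice of $S_\Phi$ over $L$ contains the non-properness set of $f_L=F+L$ but is in general strictly larger: $\Phi$ can fail to be proper over $(L,y)$ because preimages for \emph{nearby} parameters escape to infinity, even though $f_L$ itself is proper; this is exactly what happens when the degree drops, $\mu(f_L)<\mu(\Phi)$. Concretely, take $X=\{(x,y)\in\C^2:\ y=x^2\}$, $F=0$, $m=1$, so that $f_{(a,b)}=(ax+bx^2)|_X$. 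For $a\neq0$ the map $f_{(a,0)}$ is finite (it is an isomorphism onto $\C$), yet for every $y$ one root of $a'x+b'x^2=y'$ behaves like $-a'/b'\to\infty$ as $(a',b')\to(a,0)$, so $((a,0),y)\in S_\Phi$ for all $y$. Hence $\{b=0,\ a\neq 0\}\subset\mathcal F\cap\pi(S_\Phi)$, the asserted equality is false, and with it your proof that $\mathcal F$ is constructible --- so part a) is not established as written. (If instead you read ``non-properness set of the family'' fiberwise, as $\bigcup_L\{L\}\times S_{f_L}$, the set equality holds, but that set is not the one covered by \cite{Je}, \cite{Je1}, so its constructibility is again unproven.)

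The repair lies inside your own proof: Theorem \ref{rodzina}, which you invoke only for b), is precisely the device the paper uses for this upgrade. Once the proper parameters are known to be dense, the second alternative of the dichotomy is impossible (two Zariski open dense sets must meet), so the first alternative holds and produces at once a Zariski open dense $U_a$ on which every $F+L$ is proper, and part b) of the same theorem gives the topological equivalence; no constructibility of $\mathcal F$ is needed. Equivalently, the multiplicity argument in the proof of Theorem \ref{rodzina} shows that density of the proper parameters forces $\pi(S_\Phi)$ to be non-dense, so $P\setminus\overline{\pi(S_\Phi)}$ already serves as $U_a$; but this cannot be shortcut by the set-theoretic identity you wrote.
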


\begin{proof}
Let $G: X\ni x \mapsto (x, F(x))\in X\times \C^m$ and $\tilde{X}=graph(G)\cong X.$ Since $m\ge \dim \tilde{X}$  a generic linear projection 
$\pi : \tilde{X}\to \C^m$ is a proper mapping. Hence also the mapping $\pi\circ G$ is proper.
Consequently for a general matrix $A\in GL(m,m)$ and general linear mapping $L\in {\mathcal L}(\C^n,\C^m)$ the mapping $H(A,L)=A(F_1,\ldots,F_m)^T+L$ is proper. Hence also the mapping $A^{-1}\circ H(A,L)$ is proper. This means that the mapping $F+ A^{-1}L$ is proper. But we can specialize the matrix $A$ to the identity and the mapping $L$ to a given linear mapping $L_0\in {\mathcal L}(\C^n,\C^m)$. Hence we see that there is a dense subset of linear mappings $L\in {\mathcal L}(\C^n,\C^m)$ such that the mapping $F+L: X\to \C^m$ is proper. Consider the algebraic family ${\mathcal F}=\{ F+L, L\in {\mathcal L}(\C^n,\C^m)\}$. By Theorem
\ref{rodzina} there exists a Zariski dense open subset $U\subset {\mathcal L}(\C^n,\C^m)$ such that every mapping $F+L; \ L\in U$ is proper and  all these mappings are topologically equivalent. Statement c) follows from Corollary \ref{wazne}.
\end{proof}

In particular for a given mapping $F:\C^2\to\C^2$ we can consider the ``linear'' deformation $F_L=F+L; \ L\in {\mathcal L}(\C^2,\C^2).$
A general member of this deformation is locally stable and proper. If $F$ is not $\mathcal{A}$ finitely determined, then this deformation gives in general a different number of cusps and folds than a ``generic'' deformation considered in this paper. We give here an example of a finitely ${\mathcal K}$ determined germ $F$ which has at least two non-equivalent stable deformations.

\begin{ex}
{\rm Take $F(x,y)=(x,y^3).$ This germ is finitely ${\mathcal K}$ determined, because it is ${\mathcal K}$ equivalent to the cusp $G(x,y)=(x, y^3+xy)$, which is stable. Consider two deformations of $F$: the first one linear $F_t=(x, y^3+ty)$ and the second one given by $G_t(x,y)=(x, y^3+txy)$. The members of the first family do not have a cusp at all and the members of the second family have exactly one cusp at $0$.}
\end{ex}

This means that (contrary to the case of $\mathcal A$ finitely determined germs) we can not define the numbers $c(F)$ and $d(F)$ for $F$ using stable deformations.

    \end{document}